\documentclass[12pt]{article}
\usepackage{amsmath,amsfonts, amssymb, graphicx}
\title{ Relatively Hyperbolic Groups have Semistabile Fundamental Group at Infinity} 
%\author{Ross Geoghegan, Craig Guilbault\footnote{This research was supported in part by Simons Foundation Grants 207264 $\&$ 427244, CRG} 
 %\hspace{.01in} and Michael Mihalik}
\author{M. Mihalik and E. Swenson\footnote{This research was supported in part by Simons Foundation Grant 209403}}
\newtheorem{theorem}{Theorem}[section]
\newtheorem{proposition}[theorem]{Proposition}
\newtheorem{lemma}[theorem]{Lemma}

\newcounter{remarknum}
\newenvironment{remark}{\addvspace{12pt}\refstepcounter{remarknum}
\noindent{\bf Remark \arabic{remarknum}.}}{\par\addvspace{12pt}}
\newenvironment{proof}{\addvspace{12pt}\noindent{\bf Proof:}}{
$\Box$\par\addvspace{12pt}}
\newcounter{examplenum}
\newenvironment{example}{\addvspace{12pt}\refstepcounter{examplenum}
\noindent{\bf Example \arabic{examplenum}.}}{\par\addvspace{12pt}}

\newcounter{definitionnum}
\newenvironment{definition}{\addvspace{12pt}\refstepcounter{definitionnum}
\noindent{\bf Definition \arabic{definitionnum}.}}{\par\addvspace{12pt}}

\date{\today}
\begin{document}
\maketitle

\begin{abstract} 
Suppose $G$ is a 1-ended finitely generated group that is hyperbolic relative to {\bf P} a finite collection of 1-ended finitely generated proper subgroups of $G$. Our main theorem states that if the boundary $\partial (G,{\bf P})$ has no cut point, then $G$ has  semistable fundamental group at $\infty$. Under mild conditions on $G$ and the members of {\bf P} the 1-ended hypotheses and the no cut point condition can be eliminated to obtain the same semistability conclusion. We give an example that shows our main result is somewhat optimal. Finally, we improve a ``double dagger" result of F. Dahmani and D. Groves. 
\end{abstract}

\section{Introduction}\label{Intro}
In this paper, we are interested in the asymptotic behavior of relatively hyperbolic groups. We consider a property of finitely presented groups that has been well studied for over 40 years called semistable fundamental group at $\infty$. 
%If a finitely presented group $G$ has semistable fundamental group at $\infty$, then the fundamental group at $\infty$ of $G$ is well-%defined. 
It is unknown at this time, whether or not all finitely presented groups have semistable fundamental group at $\infty$. The finitely presented group $G$ satisfies a weaker condition called semistable first homology at $\infty$ if and only if  $H^2(G:\mathbb ZG)$ is free abelian (see \cite{MR787167}) and it is also a long standing problem if this is always the case. Our main interest is in showing certain relatively hyperbilic groups have semistable fundamental group at $\infty$. The work of B. Bowditch \cite{Bow99B} and G. Swarup \cite{Swarup} show that if $G$ is a 1-ended word hyperbolic group then $\partial G$, the boundary of $G$, has no (global) cut point. By work of M. Bestvina and G. Mess \cite{BM91}, this is equivalent to $\partial G$ being locally connected. It was pointed out by R. Geoghegan that $G$ has semistable fundamental group at $\infty$ if and only if $\partial G$ has the shape of a locally connected continuum (see \cite{DydakS} for a proof of this fact). In particular, all 1-ended word hyperbolic groups have semistable fundamental group at $\infty$. Relatively hyperbolic groups are a much studied generalization of hyperbilic groups. In section \ref{SB}, we define what it means for a finitely generated 1-ended group to be hyperbolic relative to a finite collection of finitely generated subgroups.  Relatively hyperbolic groups have a well-defined boundary.  While 1-ended relatively hyperbolic groups nearly always have locally connected boundary, unlike the case for hyperbolic groups, there may also be cut points in this boundary. 

\begin{theorem} (\cite {Bow01}, Theorem 1.5) Suppose $(G, {\bf P})$ is relatively hyperbolic, $G$ is 1-ended and each $P\in {\bf P}$ is finitely presented, does not contain an infinite torsion group, and is either 1 or 2-ended, then $\partial (G,{\bf P})$ is locally connected.
 \end{theorem}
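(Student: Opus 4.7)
The plan is to combine Bowditch's earlier result, that every cut point of a relatively hyperbolic boundary is a parabolic fixed point, with a careful local analysis at each parabolic point, using the hypotheses on ${\bf P}$ to rule out local disconnection. Since $G$ is 1-ended and $(G,{\bf P})$ is relatively hyperbolic, $\partial(G,{\bf P})$ is a connected compact metric space; the task is to show every $\xi \in \partial(G,{\bf P})$ has arbitrarily small connected neighborhoods.

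First I would reduce to the parabolic case. At a conical limit point, the convergence dynamics of the geometrically finite action of $G$ on $\partial(G,{\bf P})$ produce arbitrarily small neighborhoods as $G$-translates of a fixed bounded set disjoint from the point, and local connectivity there follows from global connectivity of large pieces of the boundary. The problem then reduces to local connectivity at each parabolic fixed point $p$ with stabilizer $gPg^{-1}$ for some $g \in G$ and $P \in {\bf P}$. The complement $\partial(G,{\bf P}) \setminus \{p\}$ carries a cocompact $gPg^{-1}$-action on horospherical cross-sections, so it suffices to show each cross-section decomposes into finitely many large-diameter components that can be grouped into connected neighborhoods of $p$.

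When $P$ is 2-ended, $p$ is a two-pointed limit and the local structure at $p$ is explicit and connected. When $P$ is 1-ended, finite presentation together with the absence of infinite torsion allows Dunwoody's accessibility theorem to be applied to $P$, bounding its splittings over finite subgroups; combined with Bowditch's peripheral splitting framework for $(G,{\bf P})$, this forces the local decomposition at $p$ to be finite and yields connected neighborhoods of $p$ at every scale. The main obstacle is precisely this last step: translating the algebraic accessibility of $P$ into a geometric bound on how $\partial(G,{\bf P})\setminus\{p\}$ decomposes near $p$. The no-infinite-torsion hypothesis is what prevents infinitely many successive splittings over finite subgroups from producing an infinite cascade of local disconnections accumulating at $p$; without it, the local picture can in principle be as bad as a Cantor set of cut points converging on $p$, and no local connectivity statement is available.
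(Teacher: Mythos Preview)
This theorem is not proved in the paper; it is quoted from Bowditch's work (\cite{Bow01}, Theorem~1.5) as background in the introduction, with no argument given here. So there is no proof in the paper to compare your proposal against.

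That said, a few remarks on your sketch as a standalone outline. Your reduction to parabolic points is broadly the right instinct, and it is true that local connectedness at conical limit points follows from connectedness of the boundary via the convergence action. But the analysis at parabolic points is off in two places. First, ``$p$ is a two-pointed limit'' is not meaningful: $p$ is a single bounded parabolic point regardless of the number of ends of its stabilizer; what matters when $P$ is $2$-ended is that $P$ acts cocompactly on $\partial(G,{\bf P})\setminus\{p\}$ with virtually cyclic dynamics, which is what gives the explicit local picture. Second, and more seriously, your use of Dunwoody accessibility for a $1$-ended $P$ is vacuous: a $1$-ended group admits no nontrivial splitting over a finite subgroup, so there is nothing for accessibility to bound. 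The finite-presentation and no-infinite-torsion hypotheses are not there to control splittings of $P$; in Bowditch's argument they enter the analysis of the cut-point pretree for $\partial(G,{\bf P})$, ensuring that every global cut point is in fact parabolic (this is the content of \cite{Bow99B}, Theorem~0.2, also recorded here as Theorem~\ref{B3.3}(1)). Once that is known, Bowditch passes to a maximal peripheral splitting (Theorem~\ref{Acc}); the non-peripheral vertex groups then have boundaries without cut points, hence locally connected by his earlier cut-point work, and local connectedness of the whole boundary is assembled from the pieces. Your sketch does not touch this mechanism, and the explanation you give for the role of the no-infinite-torsion hypothesis is not the correct one.
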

 
Our main result follows. Note that there is no semistability hypothesis on the peripheral subgroups $P_i$.
\begin{theorem} \label{Main}%{\bf (Main)} 
Suppose $G$ is a 1-ended finitely generated group that is hyperbolic relative to a collection of 1-ended finitely generated proper subgroups ${\bf P}=\{P_1,\ldots, P_n\}$. If $\partial (G,{\bf P})$ has no cut point, then $G$ is semistable at $\infty$.
\end{theorem}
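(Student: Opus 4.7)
The plan is to verify semistability by showing that any two proper edge-path rays $r_1,r_2$ in the Cayley $2$-complex $Y$ of $G$ based at the identity vertex are properly homotopic, routing the argument through the Groves--Manning cusped space $X$ of $(G,{\bf P})$. Recall that $X$ is obtained from $Y$ by attaching a combinatorial horoball $H(gP_i)$ to each coset $gP_i$; it is Gromov hyperbolic with $\partial X=\partial(G,{\bf P})$. Because each $P_i$ is $1$-ended and finitely generated, Theorem~1.1 applies, so $\partial(G,{\bf P})$ is locally connected, and by hypothesis it has no cut point.

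First I would exploit the hyperbolicity of $X$ together with local connectivity and no-cut-points of $\partial X$ to obtain, via a Bestvina--Mess style translation adapted to the relative setting, the following coarse connectivity statement in $X$: for each compact $K\subset X$ there is a compact $K'\supset K$ such that any two points of $X\setminus K'$ can be joined by an arc in $X\setminus K$ that tracks a geodesic between them. Next I would transfer this from $X$ back to $Y$. A path in $X$ joining vertices of $Y$ may dip into horoballs, entering and leaving each $H(gP_i)$ at vertices $p,q\in gP_i$. To replace such a detour by a path in $Y$, I would invoke the $1$-endedness of $P_i$: for any prescribed compactum $C\subset Y$, the vertices $p$ and $q$ can be joined within $gP_i\setminus C$ once they are sufficiently far from the identity. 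Performing these replacements uniformly across the finitely many $G$-orbits of horoballs yields a combinatorial ``double-dagger'' style property on $(G,{\bf P})$: for every compact $C\subset Y$ there is a compact $D\supset C$ so that any two vertices of $Y\setminus D$ are joined by an edge path in $Y\setminus C$.

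To upgrade this path-connectivity at infinity to full semistability (a $\pi_1$-level condition), I would use that $X$ is simply connected, so a loop in $Y\setminus D$ bounds a disk in $X$, and this disk decomposes into $2$-cells of $Y$ together with subdisks living in horoballs. Each horoball subdisk projects to a loop in some $gP_i$ which can be filled within $gP_i$; invoking $1$-endedness of $P_i$ once more, at the level of bounded fillings in the Cayley $2$-complex of $P_i$, lets us choose these fillings to miss $C$. Assembling the pieces gives a homotopy in $Y\setminus C$ as required.

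The main obstacle is the last bookkeeping step. Every time we route a homotopy around a horoball using $1$-endedness of a peripheral, we introduce a new path that must simultaneously miss an a-priori compactum and stay close enough to the original $X$-homotopy to be re-used for the next horoball encountered. Making this uniform across the unboundedly many cosets $gP_i$ that a given homotopy can visit forces the nested compacta $C\subset D\subset D'\subset\cdots$ to be chosen in an order dictated by the combinatorial geometry of $X$ rather than of $Y$, and controlling this interplay is where the real work lies.
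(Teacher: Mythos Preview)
Your proposal has a genuine gap, and it is precisely at the step you flag as ``the main obstacle,'' though the problem is structural rather than bookkeeping.

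First, a minor point: you invoke Theorem~1.1 to get local connectivity of $\partial(G,{\bf P})$, but that theorem requires the peripherals to be finitely presented and without infinite torsion subgroups, neither of which is assumed here. The paper never uses local connectivity; it uses only the Dahmani--Groves $\ddagger$ condition (Lemma~\ref{DG4.2}), which needs only that $\partial X$ is connected with no global cut point.

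The real issue is your filling step. You take a loop in $Y\setminus D$, bound it by a disk in $X$, and then try to replace each horoball subdisk by a filling in $gP_i$ that misses $C$. But the boundary of such a subdisk is a loop in $gP_i$ of \emph{uncontrolled length}, and asking that it bound in (a thickening of) $gP_i$ missing a prescribed compact set is exactly the semistability of $P_i$. One-endedness of $P_i$ gives you far-out \emph{paths}, not far-out \emph{disks}. The theorem is stated with no semistability hypothesis on the peripherals, and the paper emphasizes this; your route would reintroduce that hypothesis.

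The paper's argument avoids this by never producing long peripheral loops. The $\ddagger$ condition yields, for adjacent vertices $b_{i-1},b_i$ on a far-out loop $\beta$, a path $\mu_i$ in $X$ of \emph{uniformly bounded length} $N$ that stays far from $\ast$. Because $|\mu_i|\le N$, each horoball excursion of $\mu_i$ has bounded length and bounded depth, so Lemmas~\ref{CaseN1} and~\ref{CaseN2} (the latter using only $1$-endedness of the peripheral, via Lemma~\ref{NoLinDeadEnd}) replace it by a path in $H(\bar d)$ of length bounded by a function of $N$ alone. Projecting to $Y$ gives $\psi_i''$ of bounded length $N_2$, and the loop $[b_{i-1},b_i]\cdot\lambda_i'\cdot(\psi_i'')^{-1}\cdot(\lambda_{i-1}')^{-1}$ has length $\le 1+2N_3+N_2$ independent of $\beta$. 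Such short loops can be filled in $Y$ with uniformly bounded diameter (using simple connectivity of $Y$, i.e.\ finite presentability of $G$, not of the $P_i$), hence missing $C$ once $D$ is chosen large enough. Iterating pushes $\beta$ outward along the base ray. The uniform length bound from $\ddagger$ is what makes the whole scheme work without ever needing semistability of the peripherals.
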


We will list several results in the literature which, in conjunction with our theorem,  imply that in many cases we can drop the 1-ended hypotheses and the no cut point hypothesis and still obtain the same semistability conclusion. 
If a group is $G$ is finitely presented and hyperbolic relative to a finite collection of finitely generated subgroups $P_i$, then the $P_i$ are finitely presented as well (see \cite {DG13} or for a more general result \cite{DGO17}, Theorem 2.11). 
The following result can be derived from work of C. Drutu and M. Sapir \cite{DS05} (Corollary 1.14) or D. Osin \cite{Osin06}. 

\begin{theorem} \label{split1} %{\bf (split1)} 
Suppose $G$ is finitely generated and hyperbolic relative to the finitely generated groups $P_1,\ldots, P_n$. If $\mathcal P_i$ is a finite graph of groups decomposition of $P_i$ such that each edge group of $\mathcal P_i$ is finite, then $G$ is also hyperbolic relative to $\{P_1,\ldots, P_{i-1}, P_i,\ldots, P_n\}\cup V(\mathcal P_i)$ where $V(\mathcal P_i)$ is the set of vertex groups of $\mathcal P_i$. 
\end{theorem}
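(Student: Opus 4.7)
The plan is to deduce Theorem \ref{split1} from the transitivity of relative hyperbolicity, namely the statement that if $G$ is hyperbolic relative to $\{H_1,\ldots,H_n\}$ and one of the peripheral subgroups $H_i$ is itself hyperbolic relative to a finite collection $\{K_1,\ldots,K_m\}$ of finitely generated subgroups, then $G$ is hyperbolic relative to the combined collection $\{H_1,\ldots,\widehat{H_i},\ldots,H_n\}\cup\{K_1,\ldots,K_m\}$. This transitivity is essentially the content of Corollary 1.14 of Drutu--Sapir \cite{DS05}, and can also be extracted from Osin's isoperimetric framework \cite{Osin06}. Granting it, the task reduces to showing that $P_i$ is itself hyperbolic relative to the (finitely generated) members of $V(\mathcal P_i)$.

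First I would verify that each vertex group $V\in V(\mathcal P_i)$ is finitely generated. Since $\mathcal P_i$ is a finite graph of groups with (trivially finitely generated) finite edge groups and $P_i$ is finitely generated, a standard Bass--Serre normal-form argument extracts a finite generating set for each vertex group from any finite generating set of $P_i$. Next I would verify that $P_i$ is hyperbolic relative to $V(\mathcal P_i)$ via the Bass--Serre tree $T$ of $\mathcal P_i$: the action of $P_i$ on $T$ is cocompact (since $\mathcal P_i$ is a finite graph), has finite edge stabilizers, and has vertex stabilizers conjugate to members of $V(\mathcal P_i)$. Since $T$ is $0$-hyperbolic, this is the classical prototype for relative hyperbolicity; one can thicken $T$ equivariantly to a proper geodesic space (or equivalently verify Osin's relative finite presentation together with a linear relative isoperimetric inequality directly), recovering the well known special cases of $A *_C B$ and $A *_C$ with $C$ finite.

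The main obstacle I anticipate is bookkeeping rather than anything deep: one must check the technical hypotheses in the exact form required by the composition theorem and handle degenerate cases, most notably when some vertex groups of $\mathcal P_i$ are finite (in which case such $V$ may be absorbed and dropped from the peripheral structure) and when $\mathcal P_i$ is the trivial one-vertex decomposition (in which case the statement is vacuous). Once these have been dispatched, the transitivity result yields Theorem \ref{split1} in full.
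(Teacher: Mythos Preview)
Your proposal is correct and aligns with the paper's treatment: the paper does not give its own proof of this theorem but simply states that it can be derived from Drutu--Sapir \cite{DS05} (Corollary~1.14) or Osin \cite{Osin06}, which is precisely the transitivity-of-relative-hyperbolicity route you outline. Your additional remarks (that $P_i$ is hyperbolic relative to $V(\mathcal P_i)$ via the Bass--Serre tree, and the bookkeeping about finite vertex groups) correctly fill in the details the paper leaves to the reader.
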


If a peripheral subgroup $P_i$ is either finite or 2-ended, it may be removed from the collection of peripheral subgroups and $G$ remains hyperbolic relative to the remaining subgroups. Recall that a finitely generated group is {\it accessible} if it has a finite graph of groups decomposition with each edge group finite and each vertex group either finite or 1-ended. By M. Dunwoody's accessibility theorem \cite{Dun85}, all (almost) finitely presented groups are accessible. In particular, requiring the $P_i$ in our theorem to be 1-ended is only a mild restriction. 

The following results about splittings are all due to B. Bowditch.

\begin{proposition} \label {P3.1} (\cite{Bow12}, Propositions 10.1-3). %{\bf (P3.1)} 
Let $(G,{\bf P})$ be a relatively hyperbolic group. Its boundary $\partial (G,{\bf P})$ is disconnected if and only if $G$ splits nontrivially over a finite group relative to ${\bf P}$. In this case, every vertex group is hyperbolic relative to the parabolic subgroups it contains.
In particular, if the parabolic subgroups of $G$ are all 1-ended, then $G$ is 1-ended if and only if $\partial G$ is connected.
\end{proposition}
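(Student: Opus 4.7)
The plan is to prove the three parts of the statement in order: the equivalence between disconnectedness of $\partial (G,{\bf P})$ and the existence of a finite-group splitting relative to ${\bf P}$, the inheritance of relative hyperbolicity by the vertex groups of such a splitting, and finally the corollary concerning 1-endedness.

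For the forward implication of the equivalence (splits implies disconnected), I would consider the Bass-Serre tree $T$ of a nontrivial splitting of $G$ relative to ${\bf P}$ with finite edge groups, so that every $P\in {\bf P}$ fixes a vertex of $T$. For each vertex $v$ the limit set $\Lambda G_v$ is a proper closed $G_v$-invariant subset of $\partial (G,{\bf P})$, and since edge stabilizers are finite their limit sets are empty; removing an edge of $T$ then separates $\partial (G,{\bf P})$ into the two closed subsets consisting of limits of sequences whose projections to $T$ eventually lie on each side, yielding a topological separation.

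The reverse implication (disconnected implies splits) is the main obstacle. I would work inside a proper Gromov hyperbolic space $X$ on which $G$ acts geometrically finitely with Gromov boundary $\partial (G,{\bf P})$, such as the Groves-Manning cusped space or Bowditch's fine hyperbolic graph. A separation $\partial (G,{\bf P})=A\sqcup B$ into nonempty closed subsets would be promoted to a $G$-equivariant locally finite pattern of tracks that coarsely separate $X$, following the Dunwoody-Stallings approach used for hyperbolic groups. The new ingredient compared to the purely hyperbolic case is that the tracks must be chosen to avoid the horoballs so that each peripheral coset lies entirely in a single complementary region; this is possible because each $P_i$ is parabolic and its unique limit point lies in either $A$ or $B$. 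Running the Bass-Serre correspondence on the dual tree of this pattern then produces the desired nontrivial splitting of $G$ over finite subgroups, relative to ${\bf P}$.

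For the second assertion, each vertex group $G_v$ stabilizes the closed set $\Lambda G_v\subset \partial (G,{\bf P})$ and acts there as a convergence group; the parabolic fixed points in $\Lambda G_v$ are exactly those of conjugates of members of ${\bf P}$ contained in $G_v$, and geometric finiteness is inherited from cocompactness of the $G_v$-action on the portion of $X$ associated to its vertex. Yaman's dynamical characterization of relatively hyperbolic groups then certifies $G_v$ as hyperbolic relative to those parabolic subgroups. The final corollary follows immediately: if every $P_i$ is 1-ended, then any Stallings splitting of $G$ over a finite subgroup must automatically be relative to ${\bf P}$, since a 1-ended group acting on a tree with finite edge stabilizers fixes a vertex. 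Hence arbitrary finite-group splittings of $G$ coincide with finite-group splittings relative to ${\bf P}$, and combining Stallings' theorem with the first part gives the equivalence of 1-endedness of $G$ with connectedness of $\partial (G,{\bf P})$.
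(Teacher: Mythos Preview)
The paper does not give its own proof of this proposition: it is quoted from Bowditch \cite{Bow12}, Propositions 10.1--3, and is used purely as a black box from the literature. There is therefore nothing in the present paper to compare your argument against.

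Your outline is a reasonable sketch of how such a result is established, and the corollary argument (1-ended parabolics force any Stallings splitting to be relative to ${\bf P}$, so Stallings' theorem plus the first part gives the equivalence) is correct and essentially the intended one. The ``disconnected implies splits'' direction via Dunwoody-style tracks in the cusped space is plausible but hides real work: one must verify that the separation of $\partial(G,{\bf P})$ yields a $G$-invariant, locally finite pattern with finite track stabilisers, and that the tracks can genuinely be pushed off the horoballs. Bowditch's own arguments proceed somewhat differently, working through the convergence-group formalism and fine-graph techniques rather than tracks in the cusped space, so if you intend this as more than a sketch you would need to fill in those analytic details carefully or consult the original source.
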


A splitting is {\it relative to {\bf P}} if each element of ${\bf P}$ is conjugate into a vertex group of the spllitting. 

\begin{definition} (\cite{Bow01}). Let $(G,{\bf P})$ be a relatively hyperbolic group. A {\it peripheral splitting} of $G$ is a representation of $G$ as a finite bipartite graph of groups where {\bf P} consists precisely of the (conjugacy classes of) vertex groups of one color. A peripheral splitting is a refinement of another if there is a color preserving folding of the first into the second.
\end{definition}

The following organization of results of Bowditch appears in \cite{DG08}.

\begin{theorem} \label{B3.3} (B. Bowditch) %{\bf (B3.3)} 
Let (G, {\bf P}) be a relatively hyperbolic group. Assume that $\partial G$ is connected.

(1) (\cite{Bow99B}, Theorem 0.2) If every maximal parabolic subgroup of $G$ is (1 or 2)-ended, finitely presented, and without infinite torsion subgroup, then, every global cut point of $\partial G$ is a parabolic fixed point.

(2) (\cite{Bow99A}, Theorem 1.2) If there is a global cut point that is a parabolic point, then $G$ admits a proper peripheral splitting.

(3) (\cite{Bow01}, Theorem 1.2) If $G$ admits a proper peripheral splitting, then $\partial G$ admits a global cut point.
\end{theorem}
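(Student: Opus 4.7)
The plan is to treat the three parts separately, since they are essentially independent assertions collected from three different papers of Bowditch. Part (3) is the easiest direction and I would attack it first; part (1) is the main obstacle and I would attempt it last.

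For part (3), I would argue directly from the Bass--Serre tree $T$ of the proper peripheral splitting. Because the splitting is bipartite with ${\bf P}$ on one side and the splitting is proper, there is a peripheral vertex group $P$ whose stabilizer has more than one orbit of incident edges, and an adjacent vertex group $V$ that is not conjugate into $P$. The unique parabolic fixed point $p \in \partial(G,{\bf P})$ of $P$ is the natural candidate for a cut point. I would use the convergence group action of $G$ on $\partial(G,{\bf P})$ together with the fact that orbits under $V$ and under neighboring vertex groups limit on disjoint closed subsets of $\partial G \setminus \{p\}$, using that limit sets of different vertex groups meet only at parabolic fixed points of their common edge groups. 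The topology of $\partial(G,{\bf P})$ (compactness, metrizability) then upgrades this set-theoretic separation into a topological separation, yielding a cut point at $p$.

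For part (2), assuming $p$ is a parabolic cut point, I would form Bowditch's \emph{pretree of cut points} $\Phi$: take the set of all cut points in $\partial(G,{\bf P})$ together with noncut-point ``blocks'' (maximal subsets no two points of which are separated by a cut point). The $G$-action on $\partial(G,{\bf P})$ induces a $G$-action on $\Phi$. The key step is to show that $\Phi$ carries a nontrivial simplicial tree structure on which $G$ acts without inversions, and that the vertex and edge stabilizers fit into a bipartite picture where one side consists of parabolic subgroups. I expect the technical obstacle to be verifying that $\Phi$ is discrete and that edge stabilizers are indeed peripheral. The conclusion of Bass--Serre theory then gives the required proper peripheral splitting.

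Part (1) is where the real difficulty lies. I would argue by contradiction: suppose $\xi \in \partial(G,{\bf P})$ is a cut point that is not a parabolic fixed point. Then $\xi$ is a conical limit point (since under the hypotheses the limit set consists of conical and parabolic points). The main step is to use the finite presentation, the absence of infinite torsion, and the 1- or 2-ended hypothesis on the peripheral subgroups to produce, at $\xi$, a local ``fattening'' argument: sequences $g_n \in G$ with $g_n \to \xi$ conically must move a compact set through $\xi$ in a way that is incompatible with $\xi$ separating $\partial(G,{\bf P})$. The main obstacle will be constructing the right convergent combinatorial data (a sequence of discs or polygonal annuli in a model complex for $(G,{\bf P})$) whose limits in the boundary connect the would-be two sides of $\xi$. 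This is the heart of Bowditch's argument and will require the Bestvina--Mess style analysis of cusped spaces adapted to the relatively hyperbolic setting, where the hypotheses on ${\bf P}$ are what permit local connectedness of the relevant horospherical boundaries.
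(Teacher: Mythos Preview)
The paper does not prove this theorem at all: Theorem~\ref{B3.3} is stated purely as a compilation of cited results of Bowditch, with explicit references to \cite{Bow99B}, \cite{Bow99A}, and \cite{Bow01} for each part, and no proof is given or attempted in the present paper. So there is no ``paper's own proof'' against which to compare your proposal.

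That said, your outline is a reasonable summary of how Bowditch's original arguments run. Your plan for part (3) via the Bass--Serre tree and limit sets of vertex groups, and for part (2) via the cut-point pretree, matches the shape of Bowditch's proofs. For part (1) your sketch is more impressionistic; the actual argument in \cite{Bow99B} is substantially more delicate than ``conical limit points cannot be cut points,'' and your description of the obstacle (constructing discs or annuli whose boundary limits connect the two sides) is vague enough that it does not yet constitute a proof strategy one could execute. If you were actually asked to reprove (1), you would need to engage seriously with Bowditch's dendrite and $\mathbb{R}$-tree machinery rather than a direct Bestvina--Mess style argument; the hypotheses on the peripherals enter through an accessibility-type bound, not through local connectedness of horospherical pieces as you suggest.
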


It is established in  (\cite{Bow01}, Theorem 1.3) that if $\partial G$ is connected, then any non-peripheral vertex group of a peripheral splitting also has connected boundary and is hyperbolic relative to its adjacent edge groups.

Bowditch also proves an accessibility result.
\begin{theorem} ( \cite{Bow01}, Theorem 1.4) \label{Acc}% {\bf (Acc)} 
Suppose that $G$ is relatively hyperbolic with connected boundary. Then $G$ admits a (possibly trivial) peripheral splitting which is maximal in the sense that it is not a refinement of any other peripheral splitting.
 \end{theorem}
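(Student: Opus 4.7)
The plan is to set up a correspondence between peripheral splittings of $(G,\mathbf{P})$ and cut point structures on the boundary $\partial(G,\mathbf{P})$, identify a monotone complexity measure on peripheral splittings, and bound this complexity using the finiteness of $\mathbf{P}$ up to conjugacy.

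First, by Theorem \ref{B3.3}(2) and (3), $G$ admits a proper peripheral splitting precisely when $\partial G$ contains a global cut point, and any such cut point must be a parabolic fixed point. Applying this recursively to non-peripheral vertex groups — which, by the remark following Theorem \ref{B3.3}, are themselves relatively hyperbolic with connected boundary relative to their adjacent edge groups — one sees that a peripheral splitting $\Gamma$ is refinable exactly when some non-peripheral vertex group of $\Gamma$ still has a cut point in its boundary. So a maximal peripheral splitting is one whose non-peripheral vertex groups each have cut-point-free (equivalently, one-ended with no further peripheral splitting) boundary.

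Next I would partially order peripheral splittings of $(G,\mathbf{P})$ by refinement and define a complexity $c(\Gamma)$ equal to the number of $G$-orbits of edges in the Bass--Serre tree of $\Gamma$. A strict refinement strictly increases $c(\Gamma)$, since each refinement corresponds to a nontrivial splitting of some non-peripheral vertex group (over an edge group that is necessarily parabolic or finite-intersections-with-parabolic), contributing at least one new orbit of edges. Accessibility then reduces to producing a uniform upper bound on $c(\Gamma)$ depending only on $(G,\mathbf{P})$.

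The main obstacle — and the heart of the argument — is establishing such a uniform bound. My approach would be to encode the cut point data of $\partial G$ as a canonical $G$-invariant dendrite (or pretree) $D$ whose branch points correspond to parabolic cut points; every peripheral splitting then embeds equivariantly into $D$ as a $G$-invariant forest of separating arcs. Since $G$ has only finitely many conjugacy classes of maximal parabolic subgroups (by the definition of relative hyperbolicity), the orbits of branch points of $D$ are finite in number, and so the orbits of separating arcs available to any peripheral splitting are bounded. This bounds $c(\Gamma)$ uniformly. Combining the strict monotonicity of refinement with this finite bound, either via Zorn's Lemma or directly, shows that every chain of refinements stabilizes, producing a maximal peripheral splitting. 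The delicate part is verifying that the dendrite/pretree structure of cut points is genuinely $G$-invariant and canonical, and that every peripheral splitting really does correspond to a subforest of $D$ — this is where Bowditch's analysis of the local structure of cut points in $\partial(G,\mathbf{P})$ is essential.
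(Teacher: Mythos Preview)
The paper does not prove this theorem at all; it is simply quoted from Bowditch's paper \cite{Bow01} (Theorem 1.4 there) as background input to the semistability argument. So there is no ``paper's own proof'' to compare against, and any proof you supply is going beyond what the authors do.

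On the substance of your sketch: the overall architecture is reasonable and indeed close in spirit to Bowditch's actual argument --- encode the cut-point structure of $\partial(G,\mathbf{P})$ as a canonical $G$-pretree and read off peripheral splittings from it. But the step you flag as ``the main obstacle'' really is one, and the justification you give does not close it. Knowing that there are only finitely many conjugacy classes of maximal parabolics tells you there are finitely many $G$-orbits of \emph{peripheral} vertices in any peripheral splitting, but it does not by itself bound the number of orbits of non-peripheral vertices or of edges. A bipartite $G$-tree can have finitely many orbits of one color of vertex and still have arbitrarily many edge orbits, because a single parabolic vertex group can be adjacent to many non-peripheral vertex groups along distinct edge orbits. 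Equivalently, in your dendrite picture, finitely many orbits of branch points does not automatically give finitely many orbits of complementary arcs or of ``blocks'' between them; you need control on valences or on the stabilizers of the pieces. Bowditch's actual proof obtains this finiteness by a substantial analysis of the pretree of cut points and the block decomposition of the continuum $\partial G$, not from the bare finiteness of $\mathbf{P}$. So as written your argument has a genuine gap at exactly the point you identify as delicate: you assert the bound on $c(\Gamma)$ rather than prove it.
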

 
The following result of M. Mihalik and S. Tschantz meshes well with the accessibility result of Bowditch. Notice that there is no restriction on the number of ends of any of the groups $G$, the vertex groups or the edge groups in this result. 

\begin{theorem} \cite{MT1992} \label{split} %{\bf (split)} 
Suppose $\mathcal G$ is a finite graph of groups decomposition of the finitely presented group $G$ where each edge group is finitely generated and each vertex group is finitely presented with semistable fundamental group at $\infty$. Then $G$ has semistable fundamental group at $\infty$. 
\end{theorem}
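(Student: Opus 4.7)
The plan is to model $G$ geometrically using Bass-Serre theory. I would let $T$ be the Bass-Serre tree of $\mathcal G$ and build a finite 2-complex $Y$ for $G$ as a finite union of presentation 2-complexes for the vertex groups $G_v$ glued along mapping cylinders determined by the (finitely generated) edge groups $G_e$. The universal cover $X = \tilde Y$ then carries a $G$-equivariant projection $p : X \to T$ whose vertex preimages $X_v$ are universal covers of the $G_v$-presentation complexes (and so are semistable at $\infty$ by hypothesis), while its edge preimages are mapping-cylinder neighborhoods of Cayley-type complexes $X_e$ for the edge groups. I would then verify the following standard characterization of semistability: for a proper base ray $r$ in $X$, for every compact $C \subset X$ there is a compact $D \supset C$ such that every edge-path loop based on $r$ lying in $X \setminus D$ is homotopic rel $r$, inside $X \setminus C$, to a loop arbitrarily deep along $r$. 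The non-one-ended case is handled by applying the argument to each component of the complement of a sufficiently large compact set in $X$.

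Given $C \subset X$, I would first observe that $C$ meets only finitely many vertex and edge spaces. For each vertex space $X_v$ meeting $C$, I would apply the semistability of $G_v$ to the compact set $C \cap X_v$, enlarged by a uniform collar to absorb the portion of every adjacent edge space meeting $C$, to obtain a vertex-space compact $D_v$ witnessing semistability in $X_v$. Let $D$ be the union of these $D_v$ together with a bounded neighborhood of the finitely many edge spaces meeting $C$. For any edge-path loop $\alpha \subset X \setminus D$ based on $r$, the combinatorial projection $p(\alpha)$ must backtrack in the tree $T$, and this structure lets me cut $\alpha$ into finitely many arcs each lying in a single vertex space, with endpoints on adjacent edge spaces. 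Each such arc can then be homotoped within its vertex space, and outside $C \cap X_v$, by vertex-space semistability, pushing it further out along $r$ or along an equivalent proper ray within that vertex space.

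The main obstacle will be gluing these local, per-vertex-space homotopies into a coherent global homotopy of $\alpha$. Here the finite generation of each edge group $G_e$ is crucial: $X_e$ is connected, $G_e$ acts cocompactly on it, and so any displacement of an arc-endpoint within an edge space of combinatorial length $\ell$ can be matched by a path of comparable length in $X_e$ which in turn can be pushed into either adjacent vertex space. I would use this to propagate an endpoint movement produced by a homotopy on one side of an edge space into a matching endpoint movement on the arc on the other side, and then stitch adjacent arc-homotopies together along the common edge space. The subtle points are (i) choosing the uniform collar sizes so that all the coordinated homotopies avoid $C$, and (ii) iterating outward-to-inward along the tree path $p(\alpha)$, absorbing the endpoint-motion residue into successive vertex-space homotopies so that the final stitched homotopy delivers a loop that is genuinely deep along $r$. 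The cocompactness inherent in finite generation of the edge groups is precisely what makes this residue controllable at each stitch.
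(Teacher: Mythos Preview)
The paper does not prove this theorem at all: it is quoted from \cite{MT1992} (Mihalik--Tschantz) as an external result and used as a black box in the discussion following Theorem~\ref{Acc}. There is therefore no proof in the paper to compare your proposal against.

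That said, your outline is in the spirit of the original Mihalik--Tschantz argument: build a tree of spaces over the Bass--Serre tree, use semistability of the vertex pieces locally, and exploit finite generation of the edge groups to glue local homotopies across edge spaces. Your sketch correctly identifies the two real difficulties (coherent gluing and uniform control of collar sizes), but as written it is only a plan: you have not actually carried out step (ii), and the phrase ``absorb the endpoint-motion residue'' hides the genuine work. In particular, the homotopy produced in one vertex space moves the endpoint on an edge space by an \emph{a priori} unbounded amount, and you must argue that the adjacent vertex space can accommodate this new, possibly much longer, boundary arc while still avoiding $C$; this requires an inductive choice of the compacts $D_v$ that anticipates the cumulative residue, not just a single application of semistability per vertex. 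If you want a complete proof you should consult \cite{MT1992} directly, since the present paper does not reproduce it.
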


\begin{example} \label{counter}
There are obvious limitations to our results. If $G$ is a 1-ended finitely generated group that is hyperbolic relative to an inaccessible group $P$ then our results do not apply. 

Suppose $B$ is a hyperbolic surface group with infinite cyclic subgroup $C$ and $A$ is an arbitrary 1-ended group with infinite cyclic subgroup $C'$. The group $G=A\ast _{C=C'}B$ is a finitely presented 1-ended group. By (\cite{D03}, Theorem 0.1) $G$ is hyperbolic relative to $A$.  The boundary $\partial (G, A)$ has a parabolic cut point. It would be unreasonable to expect the theory of relatively hyperbolic groups to imply that $G$ is semistable at $\infty$ in this case. Such a result would imply that (the arbitrary 1-ended group) $A$ is ``nearly" semistable at $\infty$. In fact, if $A$ contained an infinite cyclic group whose two ends were ``properly homotopic in  $A$" (any copy of $\mathbb Z\times \mathbb Z$ in $A$ would suffice), then in fact the semistabliity of $G$ would imply the semistability of $A$.
\end{example}

The boundaries of the non-peripheral vertex groups of a maximal splitting guaranteed by Theorem \ref{Acc} do not contain parabolic cut points and it is conjectured that they contain no global cut points of any sort. If that is the case, and $G$ is a finitely presented group that is hyperbolic relative to a finite collection {\bf P} of finitely presented subgroups then the only thing that would prevent our result (in conjunction with those listed in this section) from implying that $G$ is semistable at $\infty$ is the possibility of running into an inaccessible peripheral subgroup or a non-semistable  peripheral vertex group of the maximal splitting. If $G$ and the members of {\bf P} are all accessible, then Theorems \ref{split1} and \ref{P3.1} reduce to the case where $G$ and the members of {\bf P} are 1-ended. If the edge groups of non-peripheral vertex groups of a maximal peripheral splitting of $G$ are finitely presented, 1-ended and without infinite torsion subgroups, then they have no global cut points and Theorem \ref{Main} implies each of these vertex groups is semistable at $\infty$. Then Theorem \ref{split} implies $G$ is semistable at $\infty$.

%Suppose $G$ is a finitely presented group and $G$ is hyperbolic relative to a finite collection {\bf P} of finitely generated peripheral subgroups. Then in order to show that $G$ is semistable at $\infty$ we can use Theorems \ref{split1} and \ref{P3.1} to reduce to the case where $G$ and the members of {\bf P} are 1-ended and finitely presented. If the edge groups of the decomposition guaranteed by Theorem \ref{Acc} are finitely presented then a non-peripheral vertex group of that decomposition with no infinite torsion subgrouphas no global cut point and our main theorem implies these vertex groups are semistable at $\infty$. If additionally, each peripheral vertex group is semistable at $\infty$, then Theorem \ref{split} implies that $G$ is semistable at $\infty$. 

%say that only peripheral subgroups that are not accessible can prevent us from showing a finitely generated group $G$  that is hyperbolic relative to a finite collection of finitely generated subgroups  have semistable fundamental group at $\infty$.  If no peripheral is inaccessible then Theorem \ref{split1} implies that we may assume the peripheral subgroups are 1-ended. If $G$ is not 1-ended, (but accessible) then Theorems \ref{split} and \ref{P3.1} reduce the semistability question to the case when $G$ is 1-ended. 
 
All of our work is done in Groves-Manning space, a Gromov hyperbolic, locally finite 2-complex on which our relatively hyperbolic group acts by isometries (see \S \ref{GM}). It follows from (\cite{Bow12}, \S 6 and \S 9) that the Bowditch boundary agrees with the Gromov boundary of Groves-Manning space.  One of the main tools of our proof is a ``double dagger" result that appears as Lemma 4.2 of \cite{DG08}. This result is analogous to the original double dagger result of \cite{BM91}. The ``replacement" path guaranteed by Lemma 4.2 of \cite{DG08} is only guaranteed to have image in the Groves-Manning space $X$. We are able to improve this result (see Theorem \ref{DD+}) to provide a replacement path in $X_m$, a fixed subspace of $X$. 
 
If $G$ is finitely generated and {\bf P} is a finite collection of subgroups of $G$ such that $G$ is hyperbolic relative to ${\bf P}$ and $\partial (G,{\bf P})$ is locally connected then (by R. Geoghegan's observation) the corresponding Groves-Manning space can be shown to have semistable fundamental group at $\infty$ (a fact that is not used in our work). The base space $Y$ in $X$ is a universal cover of a finite complex with fundamental group $G$ and our goal in this paper is to show that $Y$ (and hence $G$) has semistable fundamental group at $\infty$. It is straightforward to show that when $Y$ has semistable fundamental group at $\infty$, the neighborhoods $X_m$ of $Y$ in $X$ also have semistable fundamental group at $\infty$ (see Theorem \ref{XKSS}).

 The remainder of the paper is organized as follows: In \S \ref{CAT} we list a few related results about the asymptotic behavior  of CAT(0) groups. In \S \ref{SB} we give several equivalent definitions of semistability that are used in our proofs. In \S \ref{GM} we define the Groves-Manning space and list three results from \cite{GMa08}. This space is where all of our technical work takes place. The main lemmas \ref{CaseN1} and \ref{CaseN2} are proved in \S \ref{ML}. Also an elementary, but critical observation, Lemma \ref{NoLinDeadEnd}, about ``dead ends" in a general finitely generated group is proved. Section \ref{MT} contains the proof of the main theorem and a proof that the subspaces $X_m$ of the Groves-Manning space $X$ have semistable fundamental group at $\infty$ (under the hypotheses of the main theorem).
 
 Section \S \ref{SDD} contains an argument that uses the main theorem and improves the double dagger result of Dahmani-Groves. Instead of producing ``far out" paths in the Groves-Manning space $X$, we produce ``far out" paths in $X_m$. Finally in section \S \ref{FC} a result that seems far more general than one of our key lemmas (Lemma \ref{CaseN1}) is proved. 
 
 {\it Acknowledgements:} We wish to thank Daniel Groves, Matthew Haulmark, Chris Hruska, Jason Manning and Denis Osin for helpful conversations.
 
 \section{Complimentary CAT(0) Results} \label {CAT} %{\bf (CAT)}
 The CAT(0) groups with isolated flats, are intuitively the CAT(0) groups that are closest to hyperbolic groups without actually being hyperbolic. As an application to their main theorem C. Hruska and K. Ruane prove: 
 
 \begin{theorem} \label{HR} %{\bf (HR)} 
 (\cite{HR17}, Theorem 1.2) Any CAT(0) group with isolated flats has semistable fundamental group at $\infty$. 
 \end{theorem}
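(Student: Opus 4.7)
The plan is to deduce Theorem~\ref{HR} from Theorem~\ref{Main} together with the accessibility and splitting results already collected in this introduction. By the theorem of Hruska and Kleiner, a CAT(0) group $G$ with isolated flats is hyperbolic relative to a finite collection $\mathbf{P}=\{P_1,\ldots,P_n\}$ of conjugacy representatives of the maximal virtually abelian subgroups that stabilize a flat of dimension at least $2$; in particular each $P_i$ is finitely presented, $1$-ended, and contains no infinite torsion subgroup.

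First I would reduce to the case where $G$ is $1$-ended via Dunwoody's accessibility and Theorem~\ref{split}, following the template outlined in the discussion just after Theorem~\ref{split1}. So assume $G$ itself is $1$-ended.

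Next, I would apply Bowditch's accessibility result (Theorem~\ref{Acc}) to produce a maximal peripheral splitting $\mathcal{G}$ of $G$. Its vertex groups fall into two classes: the peripheral ones (each conjugate to a $P_i$, hence virtually abelian and trivially semistable at $\infty$) and the non-peripheral ones. By the result of Bowditch recalled immediately after Theorem~\ref{B3.3}, each non-peripheral vertex group $V$ is finitely presented, has connected boundary, and is hyperbolic relative to the finite collection of its incident edge groups, which are virtually abelian subgroups of the $P_i$'s. After discarding any finite or $2$-ended edge groups from the peripheral structure on $V$ (as allowed by the discussion following Theorem~\ref{split1}) the remaining peripherals of $V$ are $1$-ended virtually abelian. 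By maximality of $\mathcal{G}$ the group $V$ admits no proper peripheral splitting, so parts (2)--(3) of Theorem~\ref{B3.3} rule out parabolic cut points in $\partial V$, and part (1) rules out cut points altogether.

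Theorem~\ref{Main} then applies to each such $V$ with its cleaned-up relatively hyperbolic structure and yields that $V$ is semistable at $\infty$. Every vertex group of $\mathcal{G}$ is therefore semistable, the edge groups are finitely generated, and Theorem~\ref{split} assembles $G$ into a semistable group. The main obstacle is the bookkeeping in the third paragraph: one must verify that after the peripheral-structure cleanup on each non-peripheral vertex group the hypotheses of Theorem~\ref{Main} really are satisfied, and that Bowditch's no-cut-point conclusion genuinely follows from maximality of the peripheral splitting in the presence of virtually abelian peripherals of arbitrary rank.
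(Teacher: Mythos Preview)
The paper does not prove Theorem~\ref{HR} at all: it is quoted from Hruska--Ruane \cite{HR17} as a ``complimentary'' result in \S\ref{CAT}, with no argument given. There is therefore nothing to compare your proposal to in this paper. Your idea of \emph{deriving} the Hruska--Ruane theorem from Theorem~\ref{Main} together with the Bowditch/Dunwoody/Mihalik--Tschantz package is a genuinely different route from what the paper records (and indeed different from what Hruska--Ruane themselves do, which is an analysis of CAT(0) boundaries via their inverse-limit structure).

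Your outline is the one sketched in the paragraph following Example~\ref{counter}, specialised to virtually abelian peripherals; in that sense it is plausible. But note two places where it is not yet a proof. First, the paper explicitly says that it is only \emph{conjectured} that the non-peripheral vertex groups of a maximal peripheral splitting have no global cut point; you invoke Theorem~\ref{B3.3}(1) to get this, but that theorem is stated for $(G,\mathbf{P})$, and you must justify that after passing to a vertex group $V$ with its induced (and then ``cleaned up'') peripheral structure the hypotheses of Theorem~\ref{B3.3}(1) and the maximality argument still apply---this is exactly the bookkeeping you yourself flag as the main obstacle. Second, Theorem~\ref{Main} requires $V$ to be $1$-ended; you appeal to connectedness of $\partial V$, but Proposition~\ref{P3.1} only converts ``connected boundary'' into ``$1$-ended'' when the peripherals are already $1$-ended, and before your cleanup the edge-group peripherals of $V$ may be $2$-ended, so the order of operations here needs care. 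These are fixable in the isolated-flats setting because all peripherals in sight are virtually abelian, but the argument as written is a sketch, not a proof.
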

 CAT(0) groups with isolated flats are hyperbolic relative to their maximal abelian subgroups and this fact plays a critical role in their argument. 
 
 While it is known that a boundary of a 1-ended CAT(0) group cannot have a global cut point (P. Papasoglu and E. Swenson \cite{PS09} and E. Swenson \cite{Sw}) it is unknown if all CAT(0) groups are semistable at $\infty$ (see \cite{GS17}). Let $F_2$ be the free group of rank 2. The CAT(0) group $F_2\times F_2$ is not hyperbolic with respect to any collection of subgroups (see \cite{DS05}). 

\section{Semistability Background} \label{SB} %{\bf (SB)}

The best reference for the notion of semistable fundamental group at $\infty$ is \cite{G} and we use this book as a general reference throughout this section. While semistability makes sense for multiple ended spaces, we are only interested in 1-ended spaces in this article. Suppose $K$ is a 
1-ended locally finite connected CW complex. A {\it ray} in $K$ is a continuous map $r:[0,\infty)\to K$. A continuous map $f:X\to Y$ is {\it proper} if for each compact set $C$ in $Y$, $f^{-1}(C)$ is compact in $X$. 
The space $K$ has {\it semistable fundamental group at $\infty$} if any two proper rays in $K$ 
are properly homotopic. 
Suppose  $C_0, C_1,\ldots $ is a collection of compact subsets of a locally finite complex $K$ such that $C_i$ is a subset of the interior of $C_{i+1}$ and $\cup_{i=0}^\infty C_i=K$, and $r:[0,\infty)\to K$ is proper, then $\pi_1^\infty (K,r)$ is the inverse limit of the inverse system of groups:
$$\pi_1(K-C_0,r)\leftarrow \pi_1(K-C_1,r)\leftarrow \cdots$$
This inverse system is pro-isomorphic to an inverse system of groups with epimorphic bonding maps if and only if $K$ has semistable fundamental group at $\infty$.  Semistable fundamental group at $\infty$ is an invariant of proper homotopy type and quasi-isometry type. When $K$ is 1-ended with semistable fundamental group at $\infty$, $\pi_1^\infty (K,r)$ is independent of proper base ray $r$ (in direct analogy with the fundamental group of a path connected space being independent of base point). 
There are a number of equivalent forms of semistability. The equivalence of the conditions in the next theorem is discussed \cite{CM2}. 
\begin{theorem}\label{ssequiv} (see Theorem 3.2, \cite{CM2}) \label{EquivSS} %{\bf (EquivSS)}
Suppose $K$ is a connected 1-ended  locally finite CW-complex. Then the following are equivalent:
\begin{enumerate}
\item $K$ has semistable fundamental group at $\infty$.
\item Suppose $r:[0,\infty )\to K$ is a proper base ray. Then for any compact set $C$, there is a compact set $D$ such that for any third compact set $E$ and loop $\alpha$ based on $r$ and with image in $K-D$, $\alpha$ is homotopic $rel\{r\}$ to a loop in $K-E$, by a homotopy with image in $K-C$. 
\item For any compact set $C$ there is a compact set $D$ such that if $r$ and $s$ are proper rays based at $v$ and with image in $K-D$, then $r$ and $s$ are properly homotopic $rel\{v\}$, by a proper homotopy in $K-C$. 
%\item If $C$ is compact in $K$ there is a compact set $D$ in $K$ such that for any third compact set $E$ and proper rays $r$ and $s$ based at $v$ and with image in $K-D$, there is a path $\alpha$ in $K-E$ connecting points of $r$ and $s$ such that the loop determined by $\alpha$ and the initial segments of $r$ and $s$ is homotopically trivial in $K-C$.
\end{enumerate}
\end{theorem}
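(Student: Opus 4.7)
My plan is to establish the equivalences by routing through the Mittag-Leffler condition on the inverse system $\pi_1(K - C_0, r) \leftarrow \pi_1(K - C_1, r) \leftarrow \cdots$, proving (1) $\Leftrightarrow$ (2) by pro-group algebra and (1) $\Leftrightarrow$ (3) by a telescope argument that translates between the loop-level and ray-level formulations of semistability.

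For (1) $\Leftrightarrow$ (2), I would invoke the standard fact from pro-homotopy theory that an inverse sequence of groups is pro-isomorphic to a sequence with epimorphic bonding maps if and only if it satisfies the Mittag-Leffler condition. Unwinding Mittag-Leffler for this particular inverse system: for every compact $C$ there is a compact $D$ such that for every compact $E \supseteq D$, the image of $\pi_1(K - E, r) \to \pi_1(K - C, r)$ equals the image of $\pi_1(K - D, r) \to \pi_1(K - C, r)$. Since the bonding maps are induced by inclusion, with basepoints transported along the proper base ray $r$, this algebraic statement says precisely that any loop in $K - D$ based on $r$ is homotopic $rel\{r\}$ to a loop in $K - E$ via a homotopy in $K - C$, which is (2).

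For (3) $\Rightarrow$ (1) I would argue as follows. Given two proper rays $r$ and $s$, after concatenating with a compact path between their initial points we may assume they share a basepoint $v$. Fix an exhaustion $\{C_n\}$ of $K$ by compact sets, pick $D_n$ by (3) for each $n$, and choose times $t_n$ so the tails $r([t_n, \infty))$ and $s([t_n, \infty))$ lie in $K - D_n$. Applying (3) to the truncated tails (rebased via segments of $r$ and $s$ up to $t_n$) yields proper homotopies in $K - C_n$. A standard patching on an annular decomposition of $[0, \infty) \times [0, 1]$ combines them into a single proper homotopy $r \simeq s$, verifying the ray-level definition of semistability given earlier in this section.

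For (2) $\Rightarrow$ (3), which I expect to be the main obstacle, the idea is to iteratively apply (2) to loops manufactured from segments of the two rays. Given $C$, let $D$ be supplied by (2). For two proper rays $r, s$ based at $v$ with image in $K - D$, choose an exhaustion $\{E_n\}$ with $E_0 \supseteq D$ and times $t_n \uparrow \infty$ so the tails of $r$ and $s$ past $t_n$ lie in $K - E_n$. For each $n$, form a loop $\alpha_n$ based on a late point of $r$ by traversing $r|_{[t_{n-1}, t_n]}$, a short connecting arc across to $s$ in the far-out region, and $s|_{[t_{n-1}, t_n]}^{-1}$; this loop lies in $K - E_{n-1}$, so (2) homotopes it $rel\{r\}$ inside $K - C$ to a loop in $K - E_{n+1}$. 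The delicate point is arranging these loop-level homotopies so their restrictions along the shared portions of $r$ and $s$ agree, so they glue into a single continuous map $[0, \infty) \times [0, 1] \to K - C$ which is proper (since the annular pieces are forced into successively deeper neighborhoods of infinity). This compatibility bookkeeping, together with handling the drifting basepoints along the base ray $r$, is the technical heart of the implication and the reason the proof needs the ``loops based on $r$, homotopies $rel\{r\}$'' formalism rather than the naive fundamental group.
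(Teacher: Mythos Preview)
The paper does not provide its own proof of this statement: it is quoted as a background result with the attribution ``(see Theorem 3.2, \cite{CM2})'' and the preceding sentence says only that ``the equivalence of the conditions in the next theorem is discussed \cite{CM2}.'' So there is no in-paper argument to compare your proposal against.

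As for your sketch on its own terms: the overall architecture is the standard one and is essentially what one finds in the literature on semistability (including \cite{CM2} and Geoghegan's book). A couple of remarks. First, your (1)~$\Leftrightarrow$~(2) step leans on the equivalence of the ray-level definition of semistability (the one the paper actually adopts) with the pro-epimorphic condition on the inverse system; that equivalence is itself a nontrivial fact, though the paper states it without proof, so you are within your rights to invoke it. Second, in (2)~$\Rightarrow$~(3) your loop $\alpha_n$ is based on $r$, but condition (2) moves loops \emph{rel} the base ray $r$, so the $s$-side of each rectangle is not fixed during the homotopy; getting the $s$-sides of adjacent rectangles to match after the homotopies requires an additional inductive choice (typically one builds the homotopy one strip at a time, using the output of the previous strip as part of the input loop for the next). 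You flag this as ``the technical heart,'' which is accurate, but be aware that the naive formulation you wrote does not quite close up. With that caveat, the plan is sound.
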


If $G$ is a finitely presented group and $X$ is  a finite connected complex with $\pi_1(X)=G$ then $G$ is {\it semistable at} $\infty$ if the universal cover of $X$ has semistable fundamental group at $\infty$. This definition only depends on $G$ and it is unknown if all finitely presented groups are semistable at $\infty$.  

In order to prove certain finitely presented groups have semistable fundamental group at $\infty$, the notion of semistable fundamental group at $\infty$ for a finitely generated group was introduced in  \cite{M4}. Our main interest here is also finitely presented groups, but M. Haulmark and C. Hruska have elementary examples of finitely presented groups that decompose according to Bowditch's accessibility result (Theorem {\ref{Acc}) with  vertex groups that are finitely generated and not finitely presented. In that situation our main theorem comes into play.  Suppose $S$ is a finite generating set for a finitely generated group $G$. Let $\Gamma(G,S)$ be the Cayley graph of $(G,S)$. If there are a finite set of relations $\mathcal R$ of $G$ such that the space resulting from attaching 2-cells to $\Gamma$ (one at each vertex for each $R\in \mathcal R$) produces a space that has semistable fundamental group at $\infty$, then $G$ is said to have semistable fundamental group at $\infty$. We first prove our main theorem in the finitely presented case. An elementary adjustment provides the finitely generated version. 

\section{Groves-Manning Space} \label{GM}%{\bf (GM)}

D. Groves and J. Manning \cite{GMa08} construct a locally finite Gromov hyperbolic space $X$ from a finitely generated group $G$ and a collection $\mathcal P$ of finitely generated subgroups. The following definitions are directly from \cite{GMa08}

\begin{definition}  Let $\Gamma$ be any 1-complex. The combinatorial horoball based on $\Gamma$,
denoted $\mathcal H(\Gamma)$, is the 2-complex formed as follows:

{\bf A)} $\mathcal H^{(0)} =\Gamma (0) \times (\{0\}\cup \mathbb N)$

{\bf B)} $\mathcal H^{(1)}$ contains the following three types of edges. The first two types are
called horizontal, and the last type is called vertical.

(B1) If $e$ is an edge of $\Gamma$ joining $v$ to $w$ then there is a corresponding edge
$\bar e$ connecting $(v, 0)$ to $(w, 0)$.

(B2) If $k > 0$ and $0 < d_{\Gamma}(v,w) \leq 2^k$, then there is a single edge connecting
$(v, k)$ to $(w, k)$.

(B3) If $k\geq 0$ and $v\in \Gamma^{(0)}$, there is an edge joining $(v,k)$ to $(v,k+1)$.

{\bf C)} $\mathcal H^{(2)}$ contains three kinds of 2-cells:

(C1) If $\gamma \subset  \mathcal  H^{(1)}$ is a circuit composed of three horizontal edges, then there
is a 2-cell (a horizontal triangle) attached along $\gamma$.

(C2) If $\gamma \subset \mathcal H^{(1)}$ is a circuit composed of two horizontal edges and two
vertical edges, then there is a 2-cell (a vertical square) attached along $\gamma$. 

(C3) If $\gamma\subset  \mathcal H^{(1)}$ is a circuit composed of three horizontal edges and two
vertical ones, then there is a 2-cell (a vertical pentagon) attached along $\gamma$, unless $\gamma$ is the boundary of the union of a vertical square and a horizontal triangle.
\end{definition}

\begin{definition} Let $\Gamma$ be a graph and $\mathcal H(\Gamma)$ the associated combinatorial horoball. Define a depth function
$$\mathcal D : \mathcal H(\Gamma) \to  [0, \infty)$$
which satisfies:

(1) $\mathcal D(x)=0$ if $x\in \Gamma$,

(2) $\mathcal D(x)=k$ if $x$ is a vertex $(v,k)$, and

(3) $\mathcal D$ restricts to an affine function on each 1-cell and on each 2-cell.
\end{definition}

\begin{definition} Let $\Gamma$ be a graph and $\mathcal H = \mathcal H(\Gamma)$ the associated combinatorial horoball. For $n \geq 0$, let $\mathcal H_n \subset \mathcal H$ be the full sub-graph with vertex set $\Gamma ^{(0)} \times \{0,\ldots ,N\}$, so that $\mathcal H_n=\mathcal D^{-1}[0,n]$.  Let $\mathcal H^n=\mathcal D^{-1}[n,\infty)$ and $\mathcal H(n)=\mathcal D^{-1}(n)$.
\end{definition}

\begin{lemma} \label{GM3.10} %{\bf (GM3.10)} 
(\cite{GMa08}, Lemma 3.10) Let $\mathcal H(\Gamma)$ be a combinatorial horoball. Suppose that $x, y \in \mathcal H(\Gamma)$ are distinct vertices. Then there is a geodesic $\gamma(x, y) = \gamma(y, x)$ between $x$ and $y$  which consists of at most two vertical segments and a single horizontal segment of length at most 3.

Moreover, any other geodesic between $x$ and $y$ is Hausdorff distance at most 4 from this geodesic.
\end{lemma}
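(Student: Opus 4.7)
The plan is to construct an explicit canonical path of the claimed vertical--horizontal--vertical form, verify by a length count that it realizes the distance between $x$ and $y$, and then use the same estimate to trap any competing geodesic in a small Hausdorff neighborhood.

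\textbf{Construction.} Write $x=(v,m)$ and $y=(w,n)$ and set $d:=d_\Gamma(v,w)$. The combinatorial horoball has the feature that at depth $k\geq 1$ a horizontal edge joins $(u_1,k)$ to $(u_2,k)$ exactly when $d_\Gamma(u_1,u_2)\leq 2^k$, so a horizontal path of length $\leq 3$ between $(v,k)$ and $(w,k)$ exists iff $d\leq 3\cdot 2^k$. Let $k^\ast$ be the smallest integer with $k^\ast\geq\max\{m,n\}$ and $d\leq 3\cdot 2^{k^\ast}$, and define $\gamma(x,y)$ as the vertical segment from $(v,m)$ up to $(v,k^\ast)$, then a horizontal path of length $r\leq 3$ to $(w,k^\ast)$ in layer $k^\ast$, then the vertical segment from $(w,k^\ast)$ down to $(w,n)$. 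Its length is $L=(k^\ast-m)+(k^\ast-n)+r$, and the construction is manifestly symmetric in $x,y$.

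\textbf{Minimality.} Let $\sigma$ be any edge-path from $x$ to $y$ of maximum depth $K$. Then $\sigma$ uses at least $(K-m)+(K-n)$ vertical edges, and if $N_j$ denotes the number of its horizontal edges at depth $j$, the horizontal steps project to a walk in $\Gamma$ from $v$ to $w$, giving $\sum_j N_j\cdot 2^j\geq d$. Using the vertical-square 2-cells (C2) to slide two horizontal edges at depth $j$ upward into one horizontal edge at depth $j+1$ plus a pair of balancing vertical edges, one can inductively concentrate all horizontal travel into a single layer without increasing length, reducing $\sigma$ to the vertical--horizontal--vertical form at some depth $K'$. Minimizing the resulting expression $(K'-m)+(K'-n)+\lceil d/2^{K'}\rceil$ over $K'\geq\max\{m,n\}$ gives optimum $K'=k^\ast$ and optimum length $L$, so $\gamma(x,y)$ is a geodesic.

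\textbf{Hausdorff bound.} If $\sigma'$ is any other geodesic with maximum depth $K'$, the same length count forces $|K'-k^\ast|\leq 2$: descending to depth $k^\ast-3$ saves at most $6$ vertical edges but, since $d>3\cdot 2^{k^\ast-1}$, requires at least $\lceil d/2^{k^\ast-3}\rceil\geq 13$ horizontal edges compared to the $\leq 3$ of $\gamma(x,y)$, and symmetrically ascending to $k^\ast+3$ costs $6$ vertical edges to save at most $2$. Matching $\sigma'$ and $\gamma(x,y)$ level-by-level along their vertical segments, and noting that the horizontal windows (of length $\leq 3$ in each case) lie within $2$ layers of one another, yields pointwise distance at most $4$ in both directions.

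\textbf{Main obstacle.} The delicate step is the concentration argument in the minimality proof: an arbitrary path can oscillate in depth and scatter horizontal edges across many layers, so one has to verify that the local (C2)/(C3) surgeries really do combine into a global straightening without any net loss. A careful induction on the number of depth-reversals of $\sigma$, together with the geometric-series estimate $\sum_j N_j\cdot 2^j\geq d$, is where the technical bookkeeping lies.
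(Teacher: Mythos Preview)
The paper does not supply a proof of this lemma at all: it is quoted verbatim from Groves--Manning \cite{GMa08} as a black box, so there is no in-paper argument to compare against. What follows is therefore an assessment of your attempt on its own terms.

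Your overall architecture (build the canonical vertical--horizontal--vertical path, prove it is length-minimizing, then deduce the Hausdorff bound) is the right one, but the minimality step has a real gap. The ``sliding'' move you describe --- replacing two horizontal edges at depth $j$ by one horizontal edge at depth $j+1$ together with a pair of vertical edges --- takes a subpath of length $2$ to one of length $3$, so in isolation it \emph{increases} length. It only helps when one of the new vertical edges cancels against a vertical edge already present in $\sigma$, and you never explain why such cancellation is always available during the induction. Your own ``Main obstacle'' paragraph concedes this: the bookkeeping for a path that oscillates in depth and scatters horizontal edges across many layers is exactly what needs to be written out, and it is not. The inequality $\sum_j N_j\,2^{\,j}\ge d$ is true but by itself only gives $\sum_j N_j\ge d/2^{K}$, which is not sharp enough to pin down the optimum at $k^\ast$ with horizontal segment of length at most $3$; one has to argue more carefully (for instance, by first showing a geodesic can have no depth-reversals and then optimizing over the single turning depth).

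The Hausdorff argument is likewise only a sketch. Showing $|K'-k^\ast|\le 2$ is a good start, but ``matching level-by-level'' does not by itself bound the distance from an interior point of $\sigma'$ to $\gamma(x,y)$: you still need to control where the horizontal portion of $\sigma'$ sits over $\Gamma$ relative to that of $\gamma(x,y)$, and to check that the two vertical legs of $\sigma'$ stay close to those of $\gamma(x,y)$ all the way down, not just at the top.
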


\begin{definition} Let $G$ be a finitely generated group, let $\mathcal P = \{P_1, \ldots , P_n\}$ be a (finite) family of finitely generated subgroups of $G$, and let $S$ be a generating set for $G$ containing generators for each of the $P_i$.   For each $i\in \{1,\ldots ,n\}$, let $T_i$ be a left transversal for $P_i$ (i.e. a collection of representatives for left cosets of $P_i$ in $G$ which contains exactly one element of each left coset).

For each $i$, and each $t \in T_i$, let $\Gamma_{i,t}$  be the full subgraph of the Cayley graph $\Gamma (G,S)$ which contains $tP_i$. Each $\Gamma_{i,t}$ is isomorphic to the Cayley graph of $P_i$ with respect to the generators $P_i \cap  S$. Then define
$$X =\Gamma (G,S)\cup (\cup \{\mathcal H(\Gamma_{i,t})^{(1)} |1\leq i\leq n,t\in T_i\}),$$
where the graphs $\Gamma_{i,t} \subset  \Gamma(G,S)$ and $\Gamma_{i,t} \subset  \mathcal H(\Gamma_{i,t})$ are identified in the obvious way.
\end{definition}
We call the space $X$ a {\it Groves-Manning} space for  $G$, $\mathcal P$ and $S$.  
The next result shows the Groves-Manning spaces are fundamentally important spaces.  We prove our results in these spaces. 

\begin{theorem} \label{GM3.25} %{\bf (GM3.25)} 
(\cite{GMa08}, Theorem 3.25)
Suppose that $G$ is a finitely generated group and $\mathcal P=\{P_1,\ldots, P_n\}$ is a finite collection of finitely generated subgroups of $G$. Let $S$ be a finite generating set for $G$ containing generating sets for the $P_i$.  A  Groves-Manning space $X$ for $G$, $\mathcal P$ and $S$ is hyperbolic if and only if  $G$ is hyperbolic with respect to $\mathcal P$.
\end{theorem}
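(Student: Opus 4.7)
The plan is to prove the two implications separately, since each relies on different aspects of the Groves--Manning construction. For the direction that $G$ hyperbolic relative to $\mathcal P$ implies $X$ hyperbolic, I would work through Farb's equivalent formulation of relative hyperbolicity: the coned-off Cayley graph $\widehat\Gamma$, obtained from $\Gamma(G,S)$ by adjoining one new vertex $v_{i,t}$ per coset $tP_i$ and connecting it by an edge to every element of $tP_i$, is Gromov hyperbolic and the pair $(G,\mathcal P)$ satisfies the Bounded Coset Penetration (BCP) property.

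The first sub-step is to establish that each individual combinatorial horoball $\mathcal H(\Gamma_{i,t})$ is Gromov hyperbolic with a \emph{uniform} constant, and moreover has a single point at infinity. This follows from Lemma \ref{GM3.10}: the rigid structure of geodesics (at most two vertical legs and a horizontal arc of length $\leq 3$) forces geodesic triangles to be slim in a quantitative way, and the depth function $\mathcal D$ plays the role of a Busemann function for the cusp point. Concretely, one can see $\mathcal H(\Gamma_{i,t})$ as quasi-isometric to the upper half plane under $(v,k)\mapsto (v,2^k)$, which makes the intrinsic hyperbolicity transparent.

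The main work is then to compare $X$ with $\widehat\Gamma$. I would define a collapse map sending each horoball excursion of an $X$-geodesic to the corresponding cone vertex, yielding a path in $\widehat\Gamma$ of controlled length; conversely, each path in $\widehat\Gamma$ lifts to an $X$-path that ``dips'' into horoballs to an appropriate depth. By Lemma \ref{GM3.10}, traveling through a horoball from $(v,0)$ to $(w,0)$ has length roughly $2\log_2 d_\Gamma(v,w)+O(1)$, which is precisely the electric replacement needed to mimic the cone edges. Given a geodesic triangle in $X$, I would partition each side into deep (in-horoball) and shallow portions; BCP matches corresponding horoball excursions on different sides so that their entry and exit points are close in $X$, while hyperbolicity of $\widehat\Gamma$ controls the combinatorics of the shallow portions. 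Combining these, together with the uniform intrinsic hyperbolicity of individual horoballs, yields a uniform $\delta$ for triangles in $X$.

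For the converse direction, I would verify Bowditch's definition of relative hyperbolicity directly: $G$ acts by isometries on the proper hyperbolic complex $X$, cocompactly on $\mathcal D^{-1}[0,n]$ modulo the parabolic subgroups for each $n$. The vertical limits of the horoballs produce a $G$-invariant set of parabolic points in $\partial X$ whose stabilizers are precisely the conjugates of the $P_i$, and this action is easily seen to be geometrically finite, hence $(G,\mathcal P)$ is relatively hyperbolic. The main obstacle is the first direction, specifically upgrading BCP from a statement about coset penetration to the statement that corresponding entry/exit points of horoball excursions on two fellow-traveling $X$-geodesics are close in $X$ itself, not merely close modulo the cosets. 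This bookkeeping, combining three different uniform constants (from BCP, from hyperbolicity of $\widehat\Gamma$, and from intrinsic horoball hyperbolicity), is where the proof becomes technical.
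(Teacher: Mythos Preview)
The paper does not contain a proof of this theorem at all: it is quoted as Theorem~3.25 of \cite{GMa08} and used as a black box. There is therefore nothing in the present paper against which to compare your proposal.

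For what it is worth, your outline is a reasonable sketch of how the Groves--Manning proof actually proceeds: uniform intrinsic hyperbolicity of the combinatorial horoballs (via the rigid geodesic form of Lemma~\ref{GM3.10}), a quasi-isometric comparison between $X$ and an electrified/coned-off space, and then a thinness argument that separates the deep horoball portions from the shallow portions of a geodesic triangle. Your identification of the delicate point---upgrading BCP-type control on coset penetration to genuine $X$-closeness of corresponding entry/exit points---is accurate. But none of this appears in the paper under review; if you are asked to compare with the paper's own proof, the correct answer is that the paper offers none and relies entirely on the cited reference.
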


We make some minor adjustments. Assume $G$ is finitely presented and hyperbolic with respect to the subgroups $\mathcal P=\{P_1,\ldots, P_n\}$ and $S$ is a finite generating set for $G$ containing generating sets for the $P_i$. For a finite presentation $\mathcal A$ of $G$ with respect to $S$,  let $Y$ be the Cayley 2-complex for $\mathcal A$. So $Y$ is simply connected with 1-skeleton $\Gamma(G,S)$, and the quotient space $G/Y$  has fundamental group $G$. Let $X$ be a Groves-Manning space for $G$, $\mathcal P$ and $S$. Replace the Cayley graph $\Gamma(G,S)$ in $X$ with $Y$ in the obvious way. For $g\in G$ and $i\in\{1,\ldots,n\}$ we call $gP_i$ a {\it peripheral coset}. The depth functions on the horoballs over the peripheral cosets extend to $X$. So that
$$ \mathcal D:X\to [0,\infty)$$ 
where $\mathcal D^{-1}(0)=Y$ and for each horoball $H$ (over a peripheral coset) we have $H\cap \mathcal D^{-1}(m)=H(m)$, $H\cap \mathcal D^{-1}[0,m]=H_m$ and $H\cap \mathcal D^{-1}[m,\infty)=H^m$. We call each $H^m$ an {\it $m$-horoball}.

%$\mathcal P$ a finite presentation for $G$ with generating set $S$,  $Y$ the Cayley 2-complex for $G$ with respect to $\mathcal P$ and $X$ the Groves-Manning hyperbolic space for $Y$. Assume that $X$ is $\delta_0$-hyperbolic for some integer $\delta_0$. Then $X$ is $\delta$-hyperbolic for any $\delta> \delta_0$.  For $P$  a peripheral coset we call the space above $P$ a horoball. We call the space of all points that are $m$ or more above $P$ an $m$-horoball. If $H$ is a horoball, let $H^m$ be the $m$-horoball in $H$ and $H(m)$ be the set of all points of $H$ of depth $m$. So, $H(m)=H_m\cap H^m$.

\begin{lemma} \label{geo} (\cite{GMa08}, Lemma 3.26) %{\bf (geo)}
If Groves-Manning space $X$ is $\delta$-hyperbolic, then the $m$-horoballs of $X$ are convex for all $m\geq \delta$. 
\end{lemma}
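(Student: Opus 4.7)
The plan is to apply the $\delta$-thin triangle property of $X$ to geodesic triangles whose third vertex lies very deep inside the horoball $H$, and to combine this with the fact that the depth function $\mathcal{D}$ is $1$-Lipschitz along edges of $X$.

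Fix $x,y\in H^m$ with $m\geq\delta$, and let $\gamma$ be an $X$-geodesic from $x$ to $y$. Choose a vertex $z=(v,N)\in H$ at very large depth $N\gg m$ (one may think of $z$ as tending to the parabolic fixed point of $H$ in $\partial X$). By Lemma~\ref{GM3.10}, geodesics in $H$ from $x$ and $y$ to $z$ consist of at most two vertical segments joined by a horizontal connector of length at most three, and for $N$ sufficiently large they ascend essentially straight upward from $x$ and $y$. Call the resulting paths $\alpha=[x,z]$ and $\beta=[y,z]$. They are also $X$-geodesics, because any alternative path exiting $H$ into $Y$ must pay at least $2m$ in depth descent-and-ascent, exceeding the in-horoball cost. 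Along $\alpha$ and $\beta$ the depth $\mathcal{D}$ is non-decreasing, starting from $\mathcal{D}(x),\mathcal{D}(y)\geq m$.

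Now take any $p\in\gamma$. The $\delta$-thin triangle condition for $\{x,y,z\}$ provides $q\in\alpha\cup\beta$ with $d_X(p,q)\leq\delta$. In the tripod parametrization of thin triangles we may choose $q$ so that $d_X(x,q)=d_X(x,p)$ when $q\in\alpha$, at least when $d_X(x,p)$ is bounded by the Gromov product of $y,z$ based at $x$; this product can be made arbitrarily large by increasing $N$. Since $\alpha$ ascends from depth $\geq m$, we have $\mathcal{D}(q)\geq m+d_X(x,q)$, and combining with the $1$-Lipschitz property of $\mathcal{D}$ gives
$$\mathcal{D}(p)\;\geq\;\mathcal{D}(q)-d_X(p,q)\;\geq\;m+d_X(x,p)-\delta.$$
Thus $\mathcal{D}(p)\geq m$ whenever $d_X(x,p)\geq\delta$, and the symmetric argument with $\beta$ covers the case $d_X(y,p)\geq\delta$.

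The principal obstacle is the near-endpoint case, in which $p$ lies within distance $\delta$ of $x$ or $y$ and the preceding bound only yields $\mathcal{D}(p)\geq m-\delta$. This is the point at which the hypothesis $m\geq\delta$ does genuine work. To close the gap, I would argue by contradiction: if $\gamma$ ever exits $H^m$, let $\gamma'$ be a maximal subarc of $\gamma$ whose endpoints $x',y'$ both satisfy $\mathcal{D}(x')=\mathcal{D}(y')=m$ and whose interior lies strictly below depth $m$. Compare $\gamma'$ against the horoball path from $x'$ to $y'$ given by Lemma~\ref{GM3.10}, whose depth remains $\geq m$ throughout and whose length is at most $2(\log_2 D - m)+3$ for $D$ the Cayley distance in $P$ between the vertical projections of $x',y'$. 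Any path (such as $\gamma'$) descending from and returning to depth $m$ pays at least $2m$ in vertical cost, plus the corresponding Cayley distance in $G$; the hypothesis $m\geq\delta$ ensures that this competitive comparison is decisive and $\gamma'$ cannot be a shortcut, contradicting that $\gamma$ is a geodesic.
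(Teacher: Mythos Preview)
The paper does not supply its own proof of this lemma; it is quoted as Lemma~3.26 of \cite{GMa08}, so there is no in-paper argument to compare against. I will therefore comment on your attempt directly.

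There is a genuine gap in your final paragraph. You assert that the subarc $\gamma'$, with endpoints $x',y'$ at depth $m$ and interior strictly below depth $m$, ``pays at least $2m$ in vertical cost, plus the corresponding Cayley distance in $G$.'' That bound is valid only if $\gamma'$ descends all the way to depth $0$ and exits the horoball into $Y$; nothing you have proved forces this. Indeed, your own thin-triangle estimate confines any dip below depth $m$ to within distance $\delta$ of an endpoint of $\gamma$, so $|\gamma'|\leq\delta\leq m$ and $\gamma'$ never reaches depth $0$ at all --- it stays inside $H$. In that regime there is no $2m$ cost and no $G$-Cayley segment, and your length comparison has no content. (A smaller issue: the Gromov product $(y\cdot z)_x$ is bounded above by $d(x,y)$ regardless of $N$, so it cannot be ``made arbitrarily large by increasing $N$''; what you actually need, and what does hold, is that the two products $(y\cdot z)_x$ and $(x\cdot z)_y$ together account for all of the side $[x,y]$, so that the symmetric argument with $\beta$ picks up what $\alpha$ misses.)

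The repair for the endpoint case is an elementary horoball computation rather than a comparison with paths through $Y$. Since $\gamma'$ lies in $H$ and is a subarc of an $X$-geodesic, it is an $H$-geodesic between two points of $H(m)$. But in $H$ no geodesic between depth-$m$ points can drop below depth $m$: any horizontal edge available at depth $k$ (i.e.\ $d_\Gamma(v,w)\leq 2^k$) is also available at depth $k+1$, so a down--across--up portion can always be shortened by lifting the ``across'' one level and deleting the two vertical edges. That contradiction finishes the convexity statement cleanly and is the step your argument is missing.
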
 

Given two points $x$ and $y$ in a horoball $H$, there is a shortest path in $H$ from $x$ to $y$ of the from $(\alpha, \tau,\beta)$ where $\alpha$ and $\beta$ are vertical and $\tau$ is horizontal of length $\leq 3$. Note that if $\alpha$ is non-trivial and ascending and $\beta$ is non-trivial and decending, then $\tau$ has length either 2 or 3. 

Let $\ast$ be the identity vertex of $Y\subset X$ and $d$ the edge path distance in $X$. 
If $H$ is a horoball, and $x,y\in H(m)$ then let $d^{m}(x,y)$ be the length of the shortest edge path from $x$ to $y$ in $H(m)$.  For $z\in H(m)$, let $\hat B_n(z)$ be the set of all vertices $v$ of $H(m)$ such that $d^{m}(v,z)\leq n$.

\section{The Main Lemmas} \label{ML} %{\bf (ML)}
In this section we prove the two lemmas (\ref{CaseN1} and \ref{CaseN2}) that provide the combinatorial base of our semistability proof. 
\begin{lemma}\label{tight}%{\bf (tight)}
Suppose $t_1$ and $t_2$ are vertices of depth $\bar d\geq \delta$ in a horoball $H$ of $X$. Then for each $i\in \{1,2\}$, there is a geodesic $\gamma_i$  from $\ast$ to $t_i$ such that  $\gamma_i$ has the form $(\eta_i, \alpha_i,\tau_i, \beta_i)$, where the end point $x_i$ of $\eta_i$ is the first point of $\gamma_i$ in $H(\bar d)$, $\alpha_i$ and $\beta_i$ are vertical and of the same length in $H^{\bar d}$ and $\tau_i$ is horizontal of length $\leq 3$. Furthermore $d(x_1,x_2)\leq 2\delta +1$. 
\end{lemma}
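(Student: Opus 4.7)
The plan is to divide the proof into two parts: constructing the geodesics $\gamma_i$ with the prescribed form, and bounding $d(x_1,x_2)\leq 2\delta+1$. Both parts rely on convexity of $H^{\bar d}$ in $X$ (Lemma \ref{geo}, applicable since $\bar d\geq\delta$) and on the normal form of geodesics inside a combinatorial horoball (Lemma \ref{GM3.10}). For the construction, I will start with any $X$-geodesic from $\ast$ to $t_i$ and let $x_i$ be the first point along it at depth $\bar d$---equivalently the first point in $H^{\bar d}$, since the depth function is $1$-Lipschitz along edges and $t_i$ itself has depth $\bar d$. Convexity of $H^{\bar d}$ then keeps the subpath from $x_i$ to $t_i$ inside $H^{\bar d}\subset H$, and as $H$ is a subgraph of $X$ this subpath is also an $H$-geodesic. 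Lemma \ref{GM3.10} lets me replace it with a path of the form $(\alpha_i,\tau_i,\beta_i)$ where $\alpha_i,\beta_i$ are vertical and $\tau_i$ is horizontal of length at most $3$; equality of depths at $x_i$ and $t_i$, together with depth-preservation of $\tau_i$, forces $|\alpha_i|=|\beta_i|$. Taking $\eta_i$ to be the initial subpath up to $x_i$ yields $\gamma_i=(\eta_i,\alpha_i,\tau_i,\beta_i)$ with the required structure.

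For the distance bound, convexity places the $X$-geodesic $\rho$ from $x_1$ to $x_2$ inside $H^{\bar d}$, and Lemma \ref{GM3.10} gives $\rho$ the form of a vertical ascent of length $j$, a horizontal segment of length $\ell\le 3$, and a vertical descent of length $j$, with apex at depth $\bar d+j$. I will then apply the $\delta$-thin triangle condition to the triangle with sides $\eta_1,\eta_2,\rho$: the apex vertex of $\rho$ must be within $\delta$ of some point on $\eta_1\cup\eta_2$. Because the depth function is $1$-Lipschitz along edges, this witness has depth at least $\bar d+j-\delta$; but every point of $\eta_i$ has depth at most $\bar d$, attaining $\bar d$ only at its endpoint $x_i$, so $j\leq\delta$. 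A case split on whether the witness coincides with some $x_i$ or lies strictly before it (in which case its depth is at most $\bar d-1$, tightening the depth inequality by $1$), combined with the horizontal length restriction, yields the target bound $d(x_1,x_2)=2j+\ell\leq 2\delta+1$.

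The construction of $\gamma_i$ is essentially automatic once Lemmas \ref{geo} and \ref{GM3.10} are in hand. The main obstacle will be securing the exact constant $2\delta+1$ in the distance bound: a naive apex-plus-thin-triangle argument only delivers $d(x_1,x_2)\leq 2\delta+3$, so one must carefully combine the $1$-Lipschitz depth property with the strict-depth condition on the interior of $\eta_i$ to pinch the horizontal allowance down in the critical case and arrive at the stated constant.
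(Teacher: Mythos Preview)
Your overall strategy matches the paper's proof exactly: build $\gamma_i$ from convexity of $H^{\bar d}$ (Lemma \ref{geo}) together with the horoball normal form (Lemma \ref{GM3.10}), and then bound $d(x_1,x_2)$ via the $\delta$-thin triangle on $\eta_1,\eta_2,\rho$. The construction portion is fine.

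There is a small but genuine gap in your distance argument. Your ``apex'' test point is an endpoint $a_1$ (or $a_2$) of the horizontal segment $\tau$, and your case split only yields $j\le\delta$ when the witness happens to be $x_1$ or $x_2$; the ``horizontal length restriction'' you invoke does not force $\ell\le 1$ in that case, so you are stuck at $2\delta+3$ (or $2\delta+2$). The fix---which is also what justifies the paper's bare assertion that the vertical segments have length at most $\delta-1$---is to apply thinness not at $a_1$ but at an \emph{interior} vertex $v$ of $\tau$. Such a vertex exists whenever $j\ge 1$, because the remark following Lemma \ref{GM3.10} gives $\ell\ge 2$ in that case. Since $v$ is not vertically above either $x_i$, any path of length $j$ from $v$ would have to be a pure vertical descent and hence could not end at $x_i$; thus $d(v,x_i)\ge j+1$ for $i=1,2$. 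Every other point of $\eta_1\cup\eta_2$ lies outside $H^{\bar d}$, hence is either in $H$ at depth $\le\bar d-1$ (distance $\ge j+1$ from $v$) or outside $H$ entirely (distance $\ge \bar d+j$). Thinness then gives $j+1\le\delta$, i.e.\ $j\le\delta-1$, and $d(x_1,x_2)=2j+\ell\le 2(\delta-1)+3=2\delta+1$. When $j=0$ one has $d(x_1,x_2)=\ell\le 3\le 2\delta+1$.

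One further correction: your claim that ``every point of $\eta_i$ has depth at most $\bar d$'' is not literally true, since $\eta_i$ may pass through other horoballs where the global depth exceeds $\bar d$. What you actually need (and what the argument above uses) is that $\eta_i\setminus\{x_i\}$ is disjoint from $H^{\bar d}$; points outside $H$ are then automatically at distance $\ge \bar d+j$ from any vertex of $\tau$, since one must descend to level $0$ to exit $H$.
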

\begin{proof}
The proof of the first part follows directly from Lemma \ref{geo}. For the last part, simply consider the geodesic triangle formed by the geodesics $\eta_1$, $\eta_2$ and the geodesic between $x_1$ and $x_2$. The two vertical segments of this last geodesic cannot be longer than $\delta-1$ since the triangle is $\delta$-thin. 
\end{proof}

We call a path of the form $(\eta_i, \alpha_i,\tau_i, \beta_i)$ a {\it $\mathcal D(t_i)$-standard geodesic} from $\ast$ to $t_i$. The next lemma basically says if $\mathcal D(t)=\bar d\geq \delta$, $H$ is a horoball containing $t$ and $z$ is a closest point of $H(\bar d)$ to $\ast$, then the ($X$) distance $d(t,\ast)$ is approximately the distance from $\ast$ to $z$ plus the distance (in $H^{\bar d}$) from $z$ to $t$.

\begin{lemma} \label{Base} %{\bf (Base)}
Suppose $t$ is a vertex of a horoball $H$ of $X$ such that $\mathcal D(t)=\bar d\geq \delta$. 
Let $z$ be a point of $H(\bar d)$ closest to $\ast$. Then  
$$d(\ast,t)\leq d(\ast,z)+d(z,t)\leq 2\delta+1+d(\ast, t)$$ 
\end{lemma}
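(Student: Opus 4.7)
The left inequality is immediate from the triangle inequality in $X$, so the real content is the upper bound $d(\ast,z)+d(z,t)\le 2\delta+1+d(\ast,t)$. My plan is to extract this from Lemma~\ref{tight} applied to the pair $(z,t)$, both of which are vertices of $H$ at depth $\bar d\ge\delta$.

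Lemma~\ref{tight} supplies $\bar d$-standard geodesics $\gamma_1=(\eta_1,\alpha_1,\tau_1,\beta_1)$ from $\ast$ to $z$ and $\gamma_2=(\eta_2,\alpha_2,\tau_2,\beta_2)$ from $\ast$ to $t$, with first-entry points $x_1,x_2\in H(\bar d)$ satisfying $d(x_1,x_2)\le 2\delta+1$. The first observation I would make is that $x_1=z$. Indeed, $x_1$ lies on the geodesic $\gamma_1$ before $z$, so $d(\ast,x_1)\le d(\ast,z)$; but $x_1\in H(\bar d)$ and $z$ is a closest point of $H(\bar d)$ to $\ast$, so $d(\ast,x_1)\ge d(\ast,z)$. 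Hence $d(\ast,x_1)=d(\ast,z)$, and since both lie on a geodesic from $\ast$ to $z$, we must have $x_1=z$.

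Now I would string together the triangle inequality with the closest-point hypothesis. Since $x_2\in H(\bar d)$, the hypothesis gives $d(\ast,z)\le d(\ast,x_2)$. Since $x_2$ lies on the geodesic $\gamma_2$ from $\ast$ to $t$, we have $d(\ast,t)=d(\ast,x_2)+d(x_2,t)$. Finally the triangle inequality, combined with $d(z,x_2)=d(x_1,x_2)\le 2\delta+1$, yields
\[
d(\ast,z)+d(z,t)\ \le\ d(\ast,x_2)+d(z,x_2)+d(x_2,t)\ \le\ d(\ast,t)+(2\delta+1),
\]
which is the desired bound.

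There is essentially no obstacle here; the only subtle point is the identification $x_1=z$, after which the estimate is a two-line triangle-inequality chase. The lemma is really a bookkeeping consequence of Lemma~\ref{tight} and the definition of $z$ as a nearest point in $H(\bar d)$.
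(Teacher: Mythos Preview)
Your proof is correct and follows essentially the same route as the paper's own argument: both apply Lemma~\ref{tight} to the pair $(z,t)$, use that the first-entry point of the standard geodesic to $t$ is within $2\delta+1$ of $z$, and combine the closest-point property $d(\ast,z)\le d(\ast,x_2)$ with the triangle inequality and the fact that $x_2$ lies on a geodesic from $\ast$ to $t$. You are in fact more explicit than the paper in justifying the identification $x_1=z$, which the paper silently assumes when it writes ``By Lemma~\ref{tight}, $d(z,p)\le 2\delta+1$.''
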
 
\begin{proof}
The first inequality follows from the triangle inequality. Let $(\eta, \alpha,\tau, \beta)$ a $\bar d$-standard geodesic from $\ast$ to $t$. Let $p$ be the end point of $\eta$. By Lemma \ref{tight}, $d(z,p)\leq 2\delta+1$ and so by the triangle equality 
$$d(z,t)\leq d(z,p)+d(p,t)\leq 2\delta+1+d(p,t)$$
Also, $d(\ast,z)\leq d(\ast,p)$. Adding inequalities, 
$$d(\ast,z)+d(z,t)\leq d(\ast,p)+2\delta+1+d(p,t)\leq 2\delta +1+d(\ast,t)$$ 
%Since $d(\ast,t)=d(\ast,p)+d(p,t)$, our inequality follows. %$d(\ast,z)+D(z,t)\leq 2\delta+1+D(\ast, t)$. 
\end{proof}

The next two results follow directly from the definition of Groves-Manning space.
\begin{lemma} \label{SubBall1} %{\bf (SubBall1)}
Suppose $z$ and $t$ are vertices of a horoball $H$ of $X$, $\mathcal D(z)=\mathcal D(t)=m (\geq \delta)$ and  $d^m(z,t)\leq 2^n$. Then $d(z,t)\leq 2n$, and so $\hat B_{2^n}(z)\subset B_{2n}(z)$. 
\end{lemma}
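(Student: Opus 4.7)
The plan is to construct an explicit short path in $X$ from $z$ to $t$ by ascending into the horoball $H$, exploiting the exponentially longer horizontal edges at deeper levels, and then descending back to depth $m$.

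First I would unpack the hypothesis. Writing $H = \mathcal H(\Gamma)$ for the underlying peripheral graph $\Gamma$, the points $z$ and $t$ have the form $z = (v, m)$ and $t = (w, m)$ for some $v, w \in \Gamma^{(0)}$. The assumption $d^m(z,t) \leq 2^n$ gives a horizontal edge path $z = z_0, z_1, \dots, z_L = t$ in $H(m)$ with $L \leq 2^n$. Since each horizontal edge at level $m$ joins vertices whose $\Gamma$-projections are at $\Gamma$-distance at most $2^m$ (by (B2)), a triangle-inequality sum along the projection of this path yields
\[
d_\Gamma(v,w) \leq L \cdot 2^m \leq 2^{m+n}.
\]

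Next I would produce an ``ascend/horizontal/descend'' path realizing the target bound $2n$. At depth $m+n-1$, horizontal edges (by (B2)) span $\Gamma$-distance up to $2^{m+n-1}$, so two such edges span up to $2^{m+n} \geq d_\Gamma(v,w)$. Choose a $\Gamma$-geodesic from $v$ to $w$ and let $u$ be a vertex on it at $\Gamma$-distance $\lfloor d_\Gamma(v,w)/2 \rfloor$ from $v$; then $d_\Gamma(v,u), d_\Gamma(u,w) \leq 2^{m+n-1}$, so the horizontal edges from $(v,m+n-1)$ to $(u,m+n-1)$ to $(w,m+n-1)$ both exist in $H$. The path
\[
(v,m) \to (v,m+1) \to \cdots \to (v,m+n-1) \to (u,m+n-1) \to (w,m+n-1) \to (w,m+n-2) \to \cdots \to (w,m)
\]
uses $(n-1)$ vertical edges up, $2$ horizontal edges, and $(n-1)$ vertical edges down, totaling $2n$ edges in $X$. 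Hence $d(z,t) \leq 2n$, and the inclusion $\hat B_{2^n}(z) \subset B_{2n}(z)$ follows by applying this to every $t \in \hat B_{2^n}(z)$.

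The main thing that requires care is the small-$n$ boundary behavior: for $n=0$ the hypothesis forces $z=t$ (so $d(z,t)=0$), and for $n=1$ we should not ascend at all but simply use (at most two) horizontal edges at depth $m$, again giving a path of length $\leq 2$. Apart from these trivial cases, everything else is a straightforward application of the horoball construction (B2), (B3), and no hyperbolicity of $X$ is needed here — this lemma is a purely combinatorial property of a single horoball.
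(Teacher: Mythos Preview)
The paper itself provides no proof of this lemma beyond the sentence ``The next two results follow directly from the definition of Groves-Manning space.'' Your explicit ascend--cross--descend construction is precisely the standard way one unpacks that remark, and your argument is correct for $n \geq 1$: from $d^m(z,t)\leq 2^n$ you correctly deduce $d_\Gamma(v,w)\leq 2^{m+n}$, and then the path of $(n-1)$ vertical edges up, at most two horizontal edges at level $m+n-1$, and $(n-1)$ vertical edges down has length $\leq 2n$.

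There is one slip in your boundary discussion. You write that for $n=0$ the hypothesis forces $z=t$, but this is false: $d^m(z,t)\leq 2^0=1$ permits $z$ and $t$ to be adjacent in $H(m)$, in which case $d(z,t)=1>0=2n$. So the lemma as literally stated does not hold at $n=0$; it is only used in the paper with $n\geq 1$ (e.g.\ in Lemma~\ref{CaseN2} as $\hat B_{2^{k/2}}(z)\subset B_k(z)$ with $k$ a positive even integer), and your proof covers exactly that range. Simply replace your $n=0$ remark with the observation that the statement is meant for $n\geq 1$.
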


\begin{lemma}\label{SubBall2} %{\bf (SubBall2)}
Suppose $z$ and $t$ are vertices of a horoball $H$, $\mathcal D(z)=\mathcal D(t)=m(\geq \delta)$ and $d(z,t)\leq 2n+3$ (respectively $2n+2$). Then 
$d^m(z,t)\leq 3(2^n)$ (respectively $2^{n+1}$), and so $B_{2n+3}(z)\cap H(m)\subset \hat B_{3(2^n)}(z)$ (respectively $B_{2n+2}(z)\cap H(m)\subset \hat B_{2^{n+1}}(z)$). In particular, for any integer $k>0$, $B_k(z)\cap H(m)\subset \hat B_{2^{[{k\over 2}]+1}} (z)$.
\end{lemma}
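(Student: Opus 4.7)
The plan is to replace the $X$-geodesic from $z$ to $t$ by a path living entirely in the horosphere $H(m)$, by pushing the ``high'' horizontal segment of the geodesic down to depth $m$ at the cost of a controlled stretching factor, and then to read off the bounds by inspecting $2a+h$.

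Since $m \ge \delta$, Lemma \ref{geo} says $H^m$ is convex in $X$, so any $X$-geodesic between $z, t \in H(m) \subset H^m$ remains in $H$; in particular $d(z,t)$ agrees with the $H$-distance. Lemma \ref{GM3.10} then provides a geodesic of the form $(\alpha,\tau,\beta)$ with $\alpha,\beta$ vertical segments and $\tau$ horizontal of at most three edges. Because both endpoints are at the common depth $m$, the two vertical pieces must share a common length $a \ge 0$, so $\tau$ sits at depth $m+a$; writing $h \in \{0,1,2,3\}$ for the number of horizontal edges in $\tau$, one has
\[
d(z,t) \;=\; 2a + h.
\]

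Next I upgrade $\tau$ to a path at depth $m$. By definition (B2), each horizontal edge of $\tau$ at depth $m+a$ joins $\Gamma$-vertices at $\Gamma$-distance at most $2^{m+a}$; splitting a $\Gamma$-geodesic between those two vertices into $2^a$ consecutive sub-paths of $\Gamma$-length at most $2^m$ (the factor of two per vertical level down just reflects doubling of allowable horizontal span, and is formalized by a routine induction on $a$) produces $2^a$ horizontal edges at depth $m$. Concatenating these replacements across the $h$ edges of $\tau$ gives a path in $H(m)$ from $z$ to $t$ of length at most $h\cdot 2^a$, so
\[
d^{m}(z,t) \;\le\; h\cdot 2^a.
\]

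The stated bounds are now a one-line optimization. Under $2a+h \le 2n+3$ with $h \le 3$, the function $h\cdot 2^a$ is maximized at $(h,a)=(3,n)$, yielding $d^m(z,t)\le 3\cdot 2^n$; under $2a+h\le 2n+2$ the binding case is $(h,a)=(2,n)$, yielding $d^m(z,t)\le 2^{n+1}$ (the cases $h=1,3$ give at most $2^n$ and $3\cdot 2^{n-1}$ respectively, both $\le 2^{n+1}$). For the ``in particular'' clause, set $n=[k/2]$ so that $k\le 2n+2$; applying the second bound to every $t\in B_k(z)\cap H(m)$ gives $d^m(z,t)\le 2^{n+1}=2^{[k/2]+1}$, with the degenerate case $k=1$ being immediate. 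The only substantive point is the horizontal halving step, which is dictated by the definition of a combinatorial horoball; the remainder is bookkeeping on $2a+h$ and the floor function.
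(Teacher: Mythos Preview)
Your proof is correct and is exactly the kind of argument the paper has in mind: the paper gives no explicit proof, merely stating that the result ``follows directly from the definition of Groves-Manning space.'' Your elaboration---using convexity of $H^m$ to keep the geodesic in the horoball, invoking Lemma~\ref{GM3.10} to get the form $2a+h$ with $h\le 3$, replacing each depth-$(m{+}a)$ horizontal edge by at most $2^a$ edges at depth $m$ via (B2), and then optimizing $h\cdot 2^a$---is precisely what that sentence is abbreviating.
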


\begin{lemma}\label{NoLinDeadEnd} %{\bf (NoLinDeadEnd)}
Suppose $G$ is a finitely generated group with finite generating set $S$. Let $\Gamma$ be the Cayley graph of $G$ with respect to $S$. Then for any integer $r>0$ and any point $x\in \Gamma-B_{2r}(\ast)$ there is a geodesic ray at $x$ which avoids $B_r(\ast)$.
\end{lemma}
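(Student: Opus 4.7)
I would prove this by König's lemma. Let $N = d(\ast,x) > 2r$, and for each $n\ge 0$ let $\mathcal G_n$ be the set of edge-geodesics $\gamma:\{0,\ldots,n\}\to\Gamma$ with $\gamma(0)=x$ and $\gamma(t)\notin B_r(\ast)$ for every $t$. Partial-ordering $\mathcal T=\bigsqcup_n\mathcal G_n$ by one-step extension makes it a locally finite rooted tree (valence at most $|S|$ at each node, since $\Gamma$ is locally finite). An infinite branch in $\mathcal T$ is exactly a geodesic ray at $x$ avoiding $B_r(\ast)$, so the task reduces to showing that $\mathcal G_n\ne\emptyset$ for every $n$.

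The heart of the argument is the following triangle-inequality reduction: if a geodesic $\gamma$ of length $n$ from $x$ to $y$ meets $B_r(\ast)$ at time $t$, then
$$
N \le d(\ast,\gamma(t))+d(\gamma(t),x)\le r+t,\qquad
d(\ast,y)\le d(\ast,\gamma(t))+d(\gamma(t),y)\le r+(n-t),
$$
and summing gives $N+d(\ast,y)\le 2r+n$, i.e.\ $d(\ast,y)\le n-(N-2r)<n$. Contrapositively, if $y$ with $d(x,y)=n$ satisfies $d(\ast,y)\ge n-(N-2r)+1$, then \emph{every} geodesic from $x$ to $y$ avoids $B_r(\ast)$, giving $\mathcal G_n\ne\emptyset$. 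In particular, for $n\le N-r-1$ any geodesic of length $n$ from $x$ works trivially, since then $d(\ast,\gamma(t))\ge N-t\ge r+1$ throughout.

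For larger $n$, I would produce such a $y$ by choosing $v\in \Gamma$ with $d(\ast,v)\ge n+N$ (available since $G$ is infinite and $\Gamma$ locally finite, so $S(\ast,m)\ne\emptyset$ for every $m$) and letting $y$ be the $n$th vertex along a geodesic from $x$ to $v$: the bound $d(\ast,y)\ge d(\ast,v)-(d(x,v)-n)$ translates the required inequality into the Gromov-product condition $(v\cdot x)_\ast > r$. When $x$ is \emph{extendable}, i.e.\ lies on some infinite geodesic ray from $\ast$, the choice of $v$ far along such a ray yields $(v\cdot x)_\ast=N>r$ immediately. The main obstacle is the \emph{dead-end} case where no geodesic from $\ast$ through $x$ extends; here I would apply König's lemma at $x$ itself to obtain an infinite geodesic ray $\rho$ from $x$, along which $t\mapsto d(\ast,\rho(t))$ is $1$-Lipschitz and tends to infinity, and then combine this with vertex-transitivity of the Cayley graph and a compactness argument on the space of rays from $x$ to show that, for every $n$, some $y\in S(x,n)$ with the required Gromov-product estimate exists, completing the proof.
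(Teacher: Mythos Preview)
Your König's-lemma reduction is fine, and the triangle-inequality computation showing that a geodesic from $x$ to any $y$ with $(y\cdot x)_\ast>r$ avoids $B_r(\ast)$ is correct. The gap is in the dead-end case: you assert that ``vertex-transitivity of the Cayley graph and a compactness argument on the space of rays from $x$'' will produce, for every $n$, a point $y\in S(x,n)$ with the required Gromov-product bound, but you do not say how. Taking a geodesic ray $\rho$ from $x$ and setting $v=\rho(t)$ gives only $(v\cdot x)_\ast=\tfrac12(d(\ast,\rho(t))+N-t)$, which can be as small as $0$ for large $t$; so the ray $\rho$ by itself does not furnish the needed points, and the ``compactness argument'' is doing all the work without being specified. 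As stated, this step is not a proof.

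The paper's argument is much shorter and avoids König's lemma, Gromov products, and the extendable/dead-end dichotomy entirely. It uses one fact: an infinite Cayley graph contains a bi-infinite geodesic line $L$. By vertex-transitivity, translate $L$ so that it passes through $x$, and let $q^+,q^-$ be the two half-rays of $L$ at $x$. If both met $B_r(\ast)$, say at $x^+\in q^+$ and $x^-\in q^-$, then since $d(x,\ast)>2r$ each of $x^\pm$ lies at distance $>r$ from $x$, so $d(x^+,x^-)=d(x^+,x)+d(x,x^-)>2r$ along the geodesic $L$; but $x^\pm\in B_r(\ast)$ forces $d(x^+,x^-)\le 2r$, a contradiction. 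Hence one of $q^+,q^-$ is the desired ray. The missing ingredient in your sketch---vertex-transitivity plus a limit---is exactly what produces the bi-infinite geodesic $L$; once you have it, the argument is two lines and your whole König's-lemma scaffolding becomes unnecessary.
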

\begin{proof}
Let $L$ be a geodesic line in $\Gamma$. By translation, we may assume $x$ is a vertex of $L$. Now consider the two geodesic rays $q^+$ and $q^-$ in opposite directions on $L$, both with initial point $x$. If both $q^+$ and $q^-$ intersect $B_r(\ast)$ at say $x^+$ and $x^-$ respectively, then since $x\not\in B_{2r}(\ast)$ both of $x^+$ and $x^-$ are of distance at least $r+1$ from $x$. This means the distance from $x^+$ to $x^-$ is at least $2r+2$. But, since both belong to $B_r(\ast)$ they are at most distance $2r$ from one another - the desired contradiction. 
\end{proof}

Suppose $x,y\in X$ are points of a horoball $H$ and $\mathcal D(x)=\mathcal D(y)=k$. Any path (without backtracking) in the $k$-horoball $H^k$ between $x$ and $y$ can be written as 
$$\psi=(\tau_0, \alpha_1,\tau_1,\alpha_2,\tau_2,\ldots, \alpha_n\tau_n)$$ 
where for each $i$, $\alpha_i$ is non-trivial  and vertical and $\tau_i$ is horizontal. With the possible exception of $\tau_0$ and $\tau_n$, $\tau_i$ is non-trivial.  
If $\tau_1$ is the edge path $(e_1,\ldots, e_m)$, let $\alpha_{1,j}$ be the vertical path from the end point of $e_j$ to $H(k)\subset H^k$. Also let $\alpha_1^{-1}=\alpha_{ 1,0}$. Next let $\gamma_{1,j}$ be a shortest edge path in $H(k)$ from the end point of $\alpha_{1,j-1}$ to the end point of $\alpha_{1,j}$ for all $j\in \{1,\ldots, m\}$. Let $\gamma_1=(\gamma_{1,1}, \ldots, \gamma_{1,m})$ and notice that for each vertex $v$ of $\gamma_1$ the vertical edge path of length $|\alpha_1|$ is within $1$ (horizontal) unit of a vertex of one of the $e_i$. For each $\tau_i$ similarly construct $\gamma_i$. The path $\gamma=(\gamma_1,\ldots, \gamma_n)$ is a {\it projection} of $\psi$ into $H(k)$.  
%Similarly $\gamma$ can be projected (an edge at at time) to a path $\gamma_0$ in $H(0)\subset Y$, and each vertical line at a vertex %of $\gamma_0$ passes within one vertical edge of a vertex of $\gamma$. By construction we have the following:

\begin{lemma}\label{Proj} %{\bf (Proj)}
Suppose $H$ is a horoball in $X$, $x,y\in H(k)$  and 
$$\psi=(\alpha_1,\tau_1,\alpha_2,\tau_2,\ldots, \alpha_n\tau_n, \alpha_{n+1})$$ is a path in $H^k$ between $x$ and $y$, where for each $i$, $\alpha_i$ is non-trivial  and vertical and $\tau_i$ is nontrivial and horizontal. %Also assume that $\psi$ only intersects $H(\delta)$ at $x$ and $y$. 
If $\gamma$ is a projection of $\psi$ into $H(k)$ then each vertical line at a vertex of $\gamma$ passes within $1$ horizontal unit of a vertex of one of the $\tau_i$. 
%Furthermore, each vertical line segment of length $k$ at each vertex of a projection of $\gamma$ to the peripheral coset $H_0\subset %Y$ ends within one horizontal edge of a vertex of $\gamma$. 
\end{lemma}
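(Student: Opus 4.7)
The plan is to localize the problem to a single piece $\gamma_{i,j}$ of the projection, since every vertex $v$ of $\gamma$ lies on some such piece by construction. Fix a vertex $v$ of $\gamma$ and suppose $v \in \gamma_{i,j}$, the shortest edge path in $H(k)$ from $(a, k)$ to $(b, k)$, where $(a, k_i)$ and $(b, k_i)$ are the endpoints of the $j$-th edge $e_j$ of $\tau_i$ and $k_i$ is the constant depth at which the horizontal segment $\tau_i$ lies. Since $\psi$ lies in $H^k$, we have $k_i \geq k$.

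The key step is to bound the length $|\gamma_{i,j}|$. Because $(a, k_i)$ and $(b, k_i)$ are joined by a single horizontal edge at level $k_i$, horoball edge-rule (B2) yields $d_\Gamma(a, b) \leq 2^{k_i}$. Choose a $\Gamma$-geodesic from $a$ to $b$ and subdivide it into at most $2^{k_i - k}$ consecutive subpaths each of $\Gamma$-length at most $2^k$; applying (B2) at level $k$, the successive subdivision points, taken at depth $k$, are connected by single horizontal edges in $H(k)$. Hence the $H(k)$-edge-distance from $(a, k)$ to $(b, k)$ is at most $2^{k_i - k}$, and since $\gamma_{i,j}$ is shortest in $H(k)$, $|\gamma_{i,j}| \leq 2^{k_i - k}$.

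To conclude, note that each level-$k$ horizontal edge advances the $\Gamma$-coordinate by at most $2^k$. Consequently, any vertex $v$ on $\gamma_{i,j}$ satisfies $d_\Gamma(v, a) \leq 2^k \cdot 2^{k_i - k} = 2^{k_i}$. Applying (B2) at level $k_i$ (or noting equality in the degenerate case $v = a$), the vertices $(v, k_i)$ and $(a, k_i)$ are joined by a single horizontal edge. Since the vertical line at $v$ passes through $(v, k_i)$ and $(a, k_i)$ is a vertex of $\tau_i$, this vertical line lies within one horizontal unit of a vertex of $\tau_i$, as required.

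The argument is essentially a direct unfolding of the projection definition against the edge-generation rule (B2), and there is no genuine obstacle; the only care needed is in verifying that $\lceil 2^{k_i}/2^k \rceil$ collapses to exactly $2^{k_i - k}$ (which uses $k_i \geq k$) and in handling the boundary cases $v \in \{a, b\}$, for which $(v, k_i)$ is itself a vertex of $\tau_i$.
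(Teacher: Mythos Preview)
Your argument is correct. The paper itself does not supply a separate proof of this lemma; the justification is folded into the definition of projection immediately preceding it, where the authors simply write ``notice that for each vertex $v$ of $\gamma_1$ the vertical edge path of length $|\alpha_1|$ is within $1$ (horizontal) unit of a vertex of one of the $e_i$.'' Your proof is a faithful and careful unpacking of exactly that observation: you use edge rule (B2) to bound $|\gamma_{i,j}|\le 2^{k_i-k}$ and then again to place $(v,k_i)$ within one horizontal edge of an endpoint of $e_j$. This is precisely the mechanism the authors have in mind, so there is no genuine methodological difference---you have just made explicit what the paper leaves implicit. One very minor remark: when $k=0$ the relevant level-$0$ edges come from (B1) rather than (B2), but the same $\Gamma$-distance bound ($\le 2^0=1$) holds and your argument goes through unchanged.
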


The next lemma combines with Lemma \ref{Base} to show that if $\psi$ is an edge path in $X-B_r(\ast)$, has image in $H^{\hat d}$ ($\bar d\geq \delta$) for some horoball $H$, begins and ends in $H(\bar d)$  and the image of $\psi$ is ``far" from a vertex of $H(\bar d)$ closest to $\ast$ then $\psi$ projects to a path in $H(\bar d)$ that avoids $B_{r-(2\delta+1)}(\ast)$. The subsequent Lemma \ref{CaseN2}, shows that if $\psi$ has a vertex ``close" to a vertex of $H(\bar d)$ closest to $\ast$, then $\psi$ can be replaced by a path in $H(\bar d)$ that avoids $B_{r-(2\delta+5)}(\ast)$. These are the two main lemmas of the paper. In the proof of our main theorem, we only need these lemmas with $\bar d=\delta$. In the proof of  Theorem \ref{DD+}, we will need the general version of these lemmas.  Lemma \ref{CaseN1} has a variation (Lemma \ref{Case1}) that is in some sense stronger and of separate interest. We only need the version that immediately follows in order to prove our main theorem,  and so we delay the introduction of Lemma \ref{Case1} to section \S \ref{FC}. 

\begin{lemma} \label{CaseN1} %{\bf(CaseN1)}
Suppose $H$ is a horoball, $\bar d$ is an integer $\geq \delta$, $x\ne y$ are vertices of $H(\bar d)$, $z$ is a closest vertex of $H(\bar d)$ to $\ast$, $\psi$ is a path in $H^{\bar d}-B_r(\ast)$ between $x$ and $y$ that only intersects $H(\bar d)$ at $x$ and $y$ and $L=|\psi|$. 

If $\gamma$ is a projection of $\psi$ to $H(\bar d)$ and each vertex $v$ of $\psi$ is such that $d(v,z)>L+2$, then each vertex of $\gamma$ is at distance greater than $L+2$ from $z$. 
Furthermore,  the image of $\gamma$ avoids $B_{r-(2\delta+1)}(\ast)$. 
\end{lemma}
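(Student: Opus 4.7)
The plan is to fix an arbitrary vertex $w$ of $\gamma$ and, via Lemma \ref{Proj}, pair it with a vertex $v$ on a horizontal segment $\tau_i$ of $\psi$ at depth $\bar{d}+k_i$. Writing $w=(u,\bar{d})$ and $v=(u',\bar{d}+k_i)$, Lemma \ref{Proj} gives $d_\Gamma(u,u')\leq 2^{\bar{d}+k_i}$ in the peripheral Cayley graph. Since $\psi$ starts and ends in $H(\bar{d})$ with total length $L$, reaching depth $\bar{d}+k_i$ and returning costs at least $2k_i$ vertical edges, giving $k_i\leq L/2$.

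By Lemma \ref{GM3.10} every geodesic in the horoball factors as an ascent to some maximal depth, a horizontal segment of length at most $3$, and a descent. Let the maximal depths used by the $v$-to-$z$ and $w$-to-$z$ geodesics be $\bar{d}+k_i+j'$ and $\bar{d}+j$ respectively, with top horizontal segments of lengths $h'$ and $h$; then $d(v,z)=k_i+2j'+h'$ and $d(w,z)=2j+h$. The hypothesis $d(v,z)>L+2$ combined with $k_i\leq L/2$ rules out $j'=0$ (which would force $d(v,z)\leq k_i+3\leq L/2+3\leq L+2$), so $d_\Gamma(u',u_z)>3\cdot 2^{\bar{d}+k_i}$, where $u_z$ is the Cayley coordinate of $z$. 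Combined with $d_\Gamma(u,u')\leq 2^{\bar{d}+k_i}$, this yields $d_\Gamma(u,u_z)>2\cdot 2^{\bar{d}+k_i}$ and forces $j\geq k_i$.

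The central estimate for part 1 is the comparison $d(w,z)\geq d(v,z)+k_i-C$ for a small absolute constant $C$: this is because $d_\Gamma(u,u_z)$ and $d_\Gamma(u',u_z)$ differ by at most $2^{\bar{d}+k_i}$, so the minimal ascent levels $j$ and $k_i+j'$ differ by at most $1$, while the top horizontal segments differ by at most $2$. Plugging into $d(v,z)>L+2$ gives $d(w,z)>L+k_i+2-C$, which already exceeds $L+2$ whenever $k_i\geq C$. For the residual small-$k_i$ range a direct verification using the sharp forms of the horoball inequalities (exploiting that $j'\geq 1$ is strict, forcing $j\geq k_i$ or $j\geq k_i+1$ as appropriate) closes the bound. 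I expect this small-$k_i$ case analysis to be the main obstacle, since the general estimate leaves only $O(1)$ slack.

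For the ``furthermore'' statement, apply Lemma \ref{Base} to $w\in H(\bar{d})$: $d(\ast,w)\geq d(\ast,z)+d(z,w)-(2\delta+1)$. From $\psi\subset X-B_r(\ast)$ we get $d(\ast,v)>r$, and the triangle inequality gives $d(v,z)>r-d(\ast,z)$; combining with the comparison $d(z,w)\geq d(v,z)+k_i-C$ established in part 1 yields $d(\ast,w)>r+k_i-C-(2\delta+1)$, which exceeds $r-(2\delta+1)$ whenever $k_i\geq C$. For the complementary range $k_i\leq 2\delta$, the direct triangle bound $d(\ast,w)\geq d(\ast,v)-d(v,w)>r-(k_i+1)\geq r-(2\delta+1)$ already suffices, so the two estimates together cover every case.
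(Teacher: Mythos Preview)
Your approach sets up the right objects but overshoots the target and leaves the decisive steps unfinished. You aim for the quantitative comparison $d(w,z)\ge d(v,z)+k_i-C$ via $\Gamma$--distance bookkeeping, but you never pin down $C$, and your conclusion for part~1 requires $k_i\ge C$. The promised ``direct verification'' for small $k_i$ is only asserted; the claimed bound $|j-(k_i+j')|\le 1$ is delicate at the boundary (when $d_\Gamma(u,u_z)$ sits near $3\cdot 2^{\bar d+k_i}$ the optimal depth and top horizontal length can shift in ways that eat the slack), and you give no argument that the sharp horoball inequalities actually close the gap. As written, the proof is a plausible outline with a genuine hole at exactly the place you flag as ``the main obstacle.''

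The paper's argument bypasses all of this by working with the $z$-to-$p$ geodesic $(\alpha_p,\tau_p,\beta_p)$ rather than comparing two separate geodesics through $\Gamma$--distances. The key observation is that the descending leg $\beta_p$ lies on the vertical line through $p$; Lemma~\ref{Proj} places a vertex $w$ on that same vertical line within one horizontal edge of $v$. Either the turning point $y$ of the geodesic is no deeper than $w$ (which yields a path from $v$ to $z$ of length $\le \tfrac{L-1}{2}+4$, contradicting $d(v,z)>L+2$ since $L\ge 3$), or $w$ lies on $\beta_p$. In the second case one reads off directly that $d(z,p)=d(p,w)+d(w,z\text{ along geodesic})\ge 1+(d(v,z)-1)=d(v,z)>L+2$. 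This single inequality $d(p,z)\ge d(v,z)$ then feeds into Lemma~\ref{Base} to give the ``furthermore'' clause in one line: $d(p,\ast)+2\delta+1\ge d(p,z)+d(z,\ast)\ge d(v,z)+d(z,\ast)\ge d(v,\ast)>r$. No case split on the size of $k_i$, no undetermined constants. The weaker bound $d(p,z)\ge d(v,z)$ is all that is needed, and it falls out of the geometry for free once you notice that $\beta_p$ and the vertical through $p$ coincide.
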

\begin{proof} 
By Lemma \ref{geo}, $H^{\bar d}$ is convex. Let $p$ be a vertex of $\gamma$ and $(\alpha_p,\tau_p,\beta_p)$ a geodesic (in $H^{\bar d}$) from $z$ to $p$ where $\alpha_p$ and $\beta_p$ are vertical of the same length and $|\tau_p|\leq 3$. Let $y$ be the end point of $\tau_p$. By Lemma \ref{Proj}, there is a vertical segment that begins at $p$ and ends at most 1 (horizontal) unit from a vertex $v$ of $\psi$ (and $\mathcal D(v)\geq \bar d+1$). If $y$ is on that vertical line segment (Figure 1), then there is a path $\rho$ from $v$ to $z$ that begins with a horizontal edge from $v$ to a vertex $w$ on the vertical segment, followed by a vertical segment from $w$ to $y$, followed by $\tau_p^{-1}$, followed by $\alpha_p^{-1}$. 

 The sum of the lengths of the two vertical segments of $\rho$ is less than or equal to ${L-1\over 2}$ (since $v$ is on $\psi$), and so 
$$ L+2<d(v,z)\leq {L-1\over2}+4={L\over 2}+{7\over2}\hbox{ implying }$$ 
$$2L+4<L+7 \hbox{ and  } L<3,$$
which is impossible since $x\ne y$.

\vspace {.5in}
\vbox to 2in{\vspace {-2in} \hspace {-1.3in}
\hspace{-1 in}
\includegraphics[scale=1]{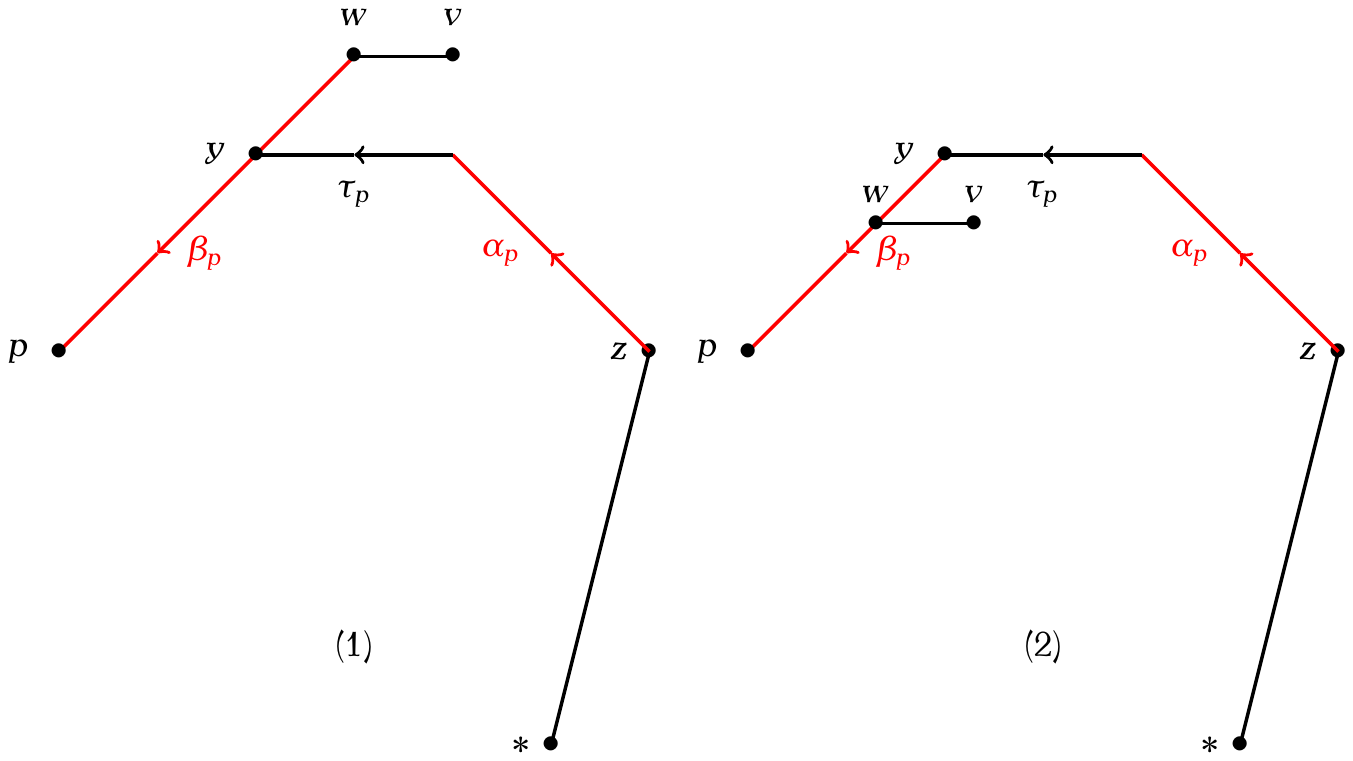}
\vss }

\vspace{1 in}

\centerline{Figure 1}

\medskip

Instead, $y$ is on the vertical line at $p$ and the vertical line segment from $p$ to $y$ contains a vertex $w$ (other than $y$) within 1 horizontal unit of a vertex $v$ of $\psi$ (Figure 1.2). Now 
$$d(z,p)=d(p,w)+d(w,y)+|\tau_p| +|\alpha_p|\hbox{ and}$$  
$$L+2<d(v,z)\leq 1+d(w,y)+|\tau_p| +|\alpha_p|$$ 
Since $d(p,w)\geq 1$, $d(z,p)>L+2$. Completing the first part of the lemma.

Since the depth of $v$ (and hence the depth of $w$) is at least $\bar d+1$,
$$d(p,z) \geq 1+d(w,z)\geq d(v,z)$$
Combining this inequality with Lemma \ref{Base} and the triangle inequality, we have for each vertex $p$ of $\gamma$:
$$ d(p,\ast)+2\delta+1\geq d(p,z)+d(z,\ast)\geq d(v,z)+d(z,\ast)\geq d(v,\ast)>r$$
 So $d(p,\ast)> r-2\delta -1$.
\end{proof}

\begin{remark} \label{RN1}%{\bf (RN1)}
Assume all of the hypotheses of Lemma \ref{CaseN1}, except $d(v,z)>L+2$ for all vertices $v$ of $\gamma$.  Consider the integer $k=r-d(z,\ast)$, so that $r=d(z,\ast)+k$. Note that if $v\in B_{L+2}(z)$ then $d(v,\ast)\leq d(v,z)+d(z,\ast)\leq L+2+d(z,\ast)=L+2+r-k$. So if $k\geq L+2$  then $d(v,\ast) \leq r$ and  $B_{L+2}(z)\subset B_r(\ast)$.
Since $\psi$ has image in $X-B_r(\ast)$, each vertex $v$ of $\psi$ is such that $d(v,z)>L+2$. Then the hypotheses (and hence the conclusion) of Lemma \ref{CaseN1} is satisfied. 
\end{remark}

If we consider Lemma \ref{CaseN1}  and Lemma \ref{CaseN2} simultaneously,  we see that any path $\psi$ connecting points $x$ and $y$ (as described in these lemmas) must satisfy the conclusion of one of these lemmas. By Remark \ref{RN1}, the situation where $k=r-d(z,\ast)\geq L+2$ (and hence every vertex of $\psi$ is of distance $>L+2$ from the special vertex $z$), is covered by Lemma \ref{CaseN1}.  The case when some vertex of $\psi$ is within $L+2$ of $z$, (and hence by Remark \ref{RN1}, $k<L+2$) is covered by Lemma \ref{CaseN2}.

\begin{lemma} \label{CaseN2} %{\bf (CaseN2)}
Suppose $L$ is a fixed integer, $\bar d$ an integer $\geq \delta$, $x,y\in H(\bar d)-B_r(\ast)$ for some horoball $H$, $\psi$ is a path in $H^{\bar d}-B_r(\ast)$ between $x$ and $y$, the image of $\psi$ intersects $H(\bar d)$ only at $x$ and $y$, $z$ is a closest point of $H(\bar d)$ to $\ast$, the length of $\psi$ is $L$, and some vertex $v$ of $\psi$ is such that $d(v,z)\leq L+1$.  Then there is an integer $F_{\ref{CaseN2}}(L)$ (independent of $r$) and a path in $H(\bar d)$ of length $\leq F_{\ref{CaseN2}}$ from $x$ to $y$ that avoids $B_{r-(2\delta+5)}(\ast)$. 
\end{lemma}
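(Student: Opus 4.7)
The plan is to project $\psi$ down to a path $\gamma$ in $H(\bar d)$ and surgically reroute the parts that enter $B_{r-(2\delta+5)}(\ast)$, using the $1$-endedness of $P_i$.

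First, the triangle inequality applied at the vertex $v$ of $\psi$ with $d(v,z)\le L+1$ gives $d(x,z),d(y,z)\le 2L+1$ and $d(\ast,z)>r-L-1$. For any $w\in H(\bar d)$ with $d(\ast,w)\le r-(2\delta+5)$, Lemma \ref{Base} yields $d(z,w)\le (2\delta+1)+d(\ast,w)-d(\ast,z)\le r-4-d(\ast,z)<L-3$. Hence the forbidden region $B_{r-(2\delta+5)}(\ast)\cap H(\bar d)$ is contained in $B_{L-3}(z)\cap H(\bar d)$, which by Lemma \ref{SubBall2} sits inside a $H(\bar d)$-ball $\hat B_{R_1(L)}(z)$ whose radius depends only on $L$.

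Next, project $\psi$ to a path $\gamma$ in $H(\bar d)$ from $x$ to $y$ via the construction preceding Lemma \ref{Proj}. Since $|\psi|=L$ with both endpoints at depth $\bar d$, every horizontal edge of $\psi$ sits at depth at most $\bar d+\lceil L/2\rceil$, and by definition of the combinatorial horoball each such edge projects to a path of length $\le 2^{\lceil L/2\rceil}$ in $H(\bar d)$; with at most $L$ horizontal edges in $\psi$, we get $|\gamma|\le L\cdot 2^{\lceil L/2\rceil}$. Each maximal subpath of $\gamma$ that enters $\hat B_{R_1(L)}(z)$ has its two endpoints on the boundary, hence at $H(\bar d)$-distance $\le R_1(L)+1$ from $z$; we replace each such subpath by a detour in $H(\bar d)$ that avoids $\hat B_{R_1(L)}(z)$.

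The reroute uses that $P_i$ is $1$-ended and acts transitively on $H(\bar d)$, making $H(\bar d)$ a $1$-ended vertex-transitive graph. So for any radius $R$ there is a finite $R^{*}=R^{*}(R,P_i)$ such that every vertex of $H(\bar d)\setminus\hat B_{R^{*}}(z)$ lies in the unique unbounded component of $H(\bar d)\setminus\hat B_{R}(z)$. Taking $R=R_1(L)$, each bad subpath of $\gamma$ is replaced by: push each endpoint beyond $\hat B_{R^{*}}(z)$ along a geodesic ray furnished by Lemma \ref{NoLinDeadEnd} applied in $H(\bar d)$, then connect the pushed endpoints through the unbounded component. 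The resulting path in $H(\bar d)$ from $x$ to $y$ avoids $B_{r-(2\delta+5)}(\ast)$ and has length depending only on $L$ and $P_i$. Taking the maximum over the finite collection $\mathbf{P}$ gives the desired integer $F_{\ref{CaseN2}}(L)$. The main obstacle is precisely this last step: one must ensure a uniform bound on the detour length, depending only on $L$ (and $\mathbf{P}$) and not on $r$, which is exactly what the $1$-endedness of each $P_i$ secures through the finite defect function $R^{*}(R,P_i)$.
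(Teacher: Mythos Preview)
Your approach shares the right idea --- bound the forbidden set $B_{r-(2\delta+5)}(\ast)\cap H(\bar d)$ inside an $\hat B$-ball about $z$ whose radius depends only on $L$, then use $1$-endedness to route around it --- but the detour step has a real gap. When you push an endpoint of a bad subpath (sitting at $\hat d$-distance $R_1(L)+1$ from $z$) along the ray from Lemma~\ref{NoLinDeadEnd}, that lemma only guarantees the ray avoids $\hat B_{\lfloor(R_1(L)+1)/2\rfloor}(z)$, not $\hat B_{R_1(L)}(z)$. You never check that the forbidden region lies inside this half-radius ball, so the push segment may re-enter $B_{r-(2\delta+5)}(\ast)$. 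Simply doubling $R_1(L)$ does not obviously repair this: for small values of $k=r-d(\ast,z)$ the points $x,y$ themselves can lie inside your enlarged ball, so your claim that bad subpaths have both endpoints ``on the boundary'' fails.

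The paper's proof sidesteps projection-and-surgery entirely. It parametrizes by $k=r-d(\ast,z)$, which by Remark~\ref{RN1} ranges over the finite set $\{-1,0,\dots,L+1\}$. For each such $k$ one verifies $x,y\in\hat B_{2^{L+1}}(z)\setminus\hat B_{2^{k/2}}(z)$ while $B_{r-(2\delta+5)}(\ast)\cap H(\bar d)\subset\hat B_{2^{(k/2)-1}}(z)$; the factor-of-two gap between these $\hat B$-radii is exactly what Lemma~\ref{NoLinDeadEnd} consumes, so $x$ and $y$ can be joined directly in $H(\bar d)\setminus\hat B_{2^{(k/2)-1}}(z)$ with length bounded by some $F_k(L)$. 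Taking the maximum over the finitely many $k$ gives $F(L)$. The point you are missing is that the radius of the forbidden ball and the location of $x,y$ both scale with $k$, not with the fixed worst case $L-3$; tracking $k$ is what makes the half-radius loss from Lemma~\ref{NoLinDeadEnd} harmless.
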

\begin{proof}
First observe that by Remark \ref{RN1}, $k=r-d(z,\ast)<L+2$. Since $|\psi|=L$ and $d(v,z)\leq L$, both $d(x,z)$ and $d(y,z)$ are $\leq 2L+1$. By  Lemma \ref{SubBall2}, $x,y\in \hat B_{2^{L+1}}(z)(\subset H(\bar d))$. If $k=r-d(z,\ast)<0$  then (since $z$ is a closest point of $H(\bar d)$ to $\ast$) no point of $H(\bar d)$ belongs to $B_{r}(\ast)$. Hence any path in $H(\bar d)$ between $x$ and $y$ will avoid $B_{r}(\ast)$. In particular, a projection of $\psi$ to $H(\bar d)$ will have length $\leq 2^{L-1\over 2}$ and avoid $B_r(\ast)$.
Let $F_{-1}(L)=2^{L-1\over 2}$ (independent of $r$). 

%Say $H'$ is another horoball, $z'$ is a closest point of $H'(\bar d)$ to $\ast$, and $k'$ and $r'$ are integers such that $k'=r'-d(z',\ast)<0$. Then there are paths of length $\leq F_{-1}$ in $H'(\bar d)$ connecting any two points in $\hat B_{2^{L+1}}(z')$ (and avoiding $B_{r'}(\ast)$). {\it Note that $F_{-1}$ only depends on $k$ being less than 0, and is independent of $r$. }

%If $P_1$ and $P_2$ are peripherals that are translates of one another, and for $i\in\{1,2\}$, $z_i$ is a closest point of $P_i$ to $\ast$ then the element of $G$ that takes $z_1$ to $z_2$ takes $P_1$ to $P_2$.  Since there are only finitely many peripheral cosets up to translations, we are only considering finitely may pairs of points $x,y$ up to translation. 

%Note that by Lemma \ref{NoLinDeadEnd} if $x$ and $y$ are not in the peripheral ball of radius $2^q$ of $z$ then there is a path between them avoiding the peripheral ball of radius $2^{q-1}$ about $z$. ALMOST done. Will finish tomorrow. 
 Now we need only consider the cases where $k\in \{0,1,\ldots, L+1\}$. For simplicity, assume $k$ is a positive even number. If $d(t,z)\leq k=r-d(\ast,z)$ then $r\geq d(\ast, z)+d(z,t)\geq d(\ast, t)$,  and so $t\in B_r(\ast)$. In particular, $B_k(z)\subset B_r(\ast)$. By Lemma \ref{SubBall1}
$$\hat B_{2^{k\over 2}}(z)\subset B_k(z)\subset B_r(\ast)$$
so that $x,y \in \hat B_{2^{L+1}}(z) - \hat B_{2^{k\over 2}}(z)$.  For every horoball $H$ in $X$, $H(\bar d)$ is a Cayley graph for some peripheral subgroup and hence is 1-ended. Lemma 2.6 provides a path $\gamma(x,y)$  from $x$ to $y$, with image in $H(\bar d)-\hat B_{2^{{k\over 2}-1}}(z)$.

Next, we show that $B_{r-(2\delta+5)}(\ast)\cap H(\bar d)\subset \hat B_{2^{{k\over 2}-1}}(z)$ 
 (in order to ensure that $\gamma(x,y)$ avoids $B_{r-(2\delta+5)}(\ast)$). By Lemma \ref{SubBall2}, $B_m(z)\cap H(\bar d)\subset \hat B_{2^{[{m\over 2}]+1}} (z)$ and so $B_{k-4}(z)\cap H(\bar d)\subset \hat B_{2^{{k\over 2}-1}}(z)$. It suffices to show that  $B_{r-(2\delta+5)}(\ast)\cap H(\bar d)\subset B_{k-4}(z)\cap H(\bar d)$. 

Say $t\in B_r(\ast)\cap H(\bar d)$ and let $p$ be the first vertex of $H(\bar d)$ on a geodesic from $\ast$ to $t$. By Lemma \ref{tight}, $d(z,p)\leq 2\delta+1$. Then $r\geq d(t,\ast)=d(\ast,p)+d(p,t)$ so that $k=r-d(\ast,z)\geq r-d(\ast, p)\geq d(p,t)$, and 
$$d(z,t)\leq d(z,p)+d(p,t)\leq 2\delta+1+k$$
In particular, $t\in B_{2\delta+1+k}(z)\cap H(\bar d)$ and 
$$B_r(\ast)\cap H(\bar d)\subset B_{2\delta+1+k}(z)\cap H(\bar d)=B_{2\delta+1+r-d(\ast,z)}(z)\cap H(\bar d)$$

This last formula is true for all $r$, and replacing $r$ by $r-2\delta-5$ gives
$$(1)\ \ \ B_{r-2\delta-5}(\ast)\cap H(\bar d) \subset B_{r-4-d(\ast,z)}(z)\cap H(\bar d)=B_{k-4}(z)\cap H(\bar d)\subset \hat B_{2^{{k\over 2}-1}}(z).$$
 
Now, for $x,y\in \hat B_{2^{L+1}}(z)-\hat B_{2^{k\over 2}}(z)$, $\gamma(x,y)$ is a path from $x$ to $y$ with image in $H(\bar d)-\hat B_{2^{{k\over 2} -1}}(z)$ and so $\gamma(x,y)$ avoids $B_{r-2\delta-5}(\ast)$. 
Let $F_k(L)$ be the length of a longest path $\gamma(x,y)$ for $x,y\in \hat B_{2^{L+1}}(z)-\hat B_{2^{k\over 2}}(z)$. We will show that for any $G$-translate $H'(\bar d)$ of $H(\bar d)$ with vertex $z'\in H'(\bar d)$ closest to $\ast$ 
and such that $k=k'=r'-d(z',\ast)$, the element $g\in G$ such that $gz=z'$ can be used to translate the paths $\gamma(x,y)$ to paths connecting any pair of points in $\hat B_{2^{L+1}}(z')-\hat B_ {2^{k\over 2}}(z')$ so that $g\gamma(x,y)$ avoids $B_{r'-(2\delta+5)}(\ast)$.

Say $H'(\bar d)$ is a $G$-translate of $H(\bar d)$, $z'$ is a closest point of $H'(\bar d)$ to $\ast$, $k=k'=r'-d(z',\ast)$ and $gz=z'$ (so $gH(\bar d)=H'(\bar d)$). Note that $g$ maps $\hat B_s(z)$ to $\hat B_s(z')$ for any integer $s$. So if $x',y'\in \hat B_{2^{L+1}}(z')-\hat B_{2^{k'\over 2}}(z')$ then there is a path connecting $x'$ and $y'$  of length $\leq F_k$ in $H'(\bar d)-\hat B_{2^{{k'\over 2}-1}}(z')$. By the same argument that establishes equation (1),  $B_{r'-(2\delta+5)}(\ast)\cap H'(\bar d)\subset \hat B_{2^{{k'\over 2}-1}}(z')$, so the paths connecting $x'$ and $y'$ avoid  $B_{r'-(2\delta+5)}(\ast)\cap H'(\bar d)$.
 Lastly, let $F_{\ref{CaseN2}}(L)=\max\{F_{-1},F_0,\ldots, F_{L+1}\}$. 
 \end{proof}

 \section{Proof of the Main Theorem}\label{MT} %{\bf (MT)}
 
The proof of Theorem \ref{Main} is completed in this section. The two papers \cite{GMa08}-Groves Manning and \cite{DG08}-Dahmani-Groves along with Bowditch's many results are used. We list a few results:

Theorem 3.33 of \cite{GMa08} insures that there are geodesic lines in $X$ that are axes for elements of $G$. A mild alteration of the proofs of Lemma 3.32 and Theorem 3.33, ensures that such lines intersect $Y$. This gives 

\begin{theorem} \label{GM3.33} %{\bf (GM3.33)}
\cite{GMa08} There is an infinite order element $g\in G$ so that if $\rho$ is a geodesic from $\ast$ to $g\ast$, then the line $(\ldots, g^{-1}\rho, \rho, g\rho,g^2\rho,\ldots)$ is a bi-infinite geodesic that has image in 
$\mathcal D^{-1}[0,19\delta]$.
\end{theorem}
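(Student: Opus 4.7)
The plan is to deduce this from Theorem 3.33 of \cite{GMa08} combined with a mild modification of its proof. The original Groves-Manning theorem already produces a bi-infinite geodesic $\ell$ in $X$ which is invariant under translation by some infinite-order $g\in G$, and a careful reading of the argument extracts the depth bound $19\delta$. The only genuinely new content needed is to arrange that $\ell$ passes through the basepoint $\ast\in Y$.

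First, I would recall how the depth bound arises. Any element $g\in G$ not conjugate into a peripheral subgroup acts loxodromically on $X$, and such elements exist since $(G,\mathbf{P})$ is relatively hyperbolic with proper peripherals. After replacing $g$ by a sufficiently high power, the Groves-Manning construction produces an honest geodesic axis $\ell$. Now suppose, for contradiction, that some point $p\in\ell$ satisfies $\mathcal{D}(p)>19\delta$, lying in a horoball $H$. By Lemma \ref{geo} the $\delta$-horoball $H^\delta$ is convex, so $\ell\cap H^\delta$ is a single geodesic subarc $\sigma$ with endpoints $x,y\in H(\delta)$. By Lemma \ref{GM3.10}, $\sigma$ has the rigid shape (ascending vertical)(horizontal of length $\leq 3$)(descending vertical), so its length grows linearly in its peak depth while the horizontal distance between $x$ and $y$ is bounded independent of that depth. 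A thin-triangle argument in $X$ then provides a strictly shorter alternate path through $H(\delta)$, contradicting that $\ell$ is globally geodesic. The explicit constant $19\delta$ comes from chasing the $\delta$-thinness constants through this computation as in \cite{GMa08}.

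Second, I would perform the modification to pass through $\ast$. The axis $\ell_0$ given by \cite{GMa08} lies within $19\delta$ of $Y$ by the above, so it comes close to some vertex of $Y$; by instead seeding the construction with the geodesic from $\ast$ to $g_0^N\ast$ (for large $N$) rather than a generic geodesic segment, one obtains an axis whose translation orbit under $g_0^N$ contains $\ast$. Equivalently, if $\ell_0$ passes through some vertex $v\in Y$, then since $G$ acts transitively on vertices of $Y$, one chooses $h\in G$ with $hv=\ast$ and replaces the chosen element by $g:=hg_0^N h^{-1}$, which is still of infinite order, with axis $h\ell_0$ passing through $\ast$. Writing $\rho$ for the subarc of this new axis from $\ast$ to $g\ast$, the full axis is precisely $(\ldots,g^{-1}\rho,\rho,g\rho,g^2\rho,\ldots)$.

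The main obstacle is the pinning step: ensuring that the limiting axis actually contains a vertex of the $G$-orbit $G\cdot\ast$ and not just passes within bounded distance. This is handled by either the seed-based construction above or, alternatively, by a direct nearest-point projection argument using convexity of horoballs (Lemma \ref{geo}) and the depth bound. Once this is in place, everything else—the existence of the axis, its invariance under $g$, the geodesy of the concatenation, and the depth bound $19\delta$—is immediate from the cited Groves-Manning results.
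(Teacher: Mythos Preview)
The paper does not give a self-contained proof: it cites Theorem~3.33 of \cite{GMa08} for the existence of the geodesic axis and the depth bound, and asserts that ``a mild alteration of the proofs of Lemma~3.32 and Theorem~3.33'' there ensures the axis intersects $Y$. Your outline matches this exactly, and your sketch of how the depth bound arises is in the spirit of the Groves--Manning argument.

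The genuine gap is in the pinning step, which you correctly identify as the main obstacle but do not actually close. Your conjugation argument is conditional (``if $\ell_0$ passes through some vertex $v\in Y$''), and the depth bound $19\delta$ alone does not guarantee this. Your seed-based alternative produces only a local geodesic through $\ast$, hence a quasi-geodesic in a $\delta$-hyperbolic space, not the honest geodesic the statement demands. The missing observation is that the axis \emph{must} meet $Y$: if every vertex of $\ell_0$ had positive depth, then $\ell_0$, being a connected edge path, would lie in a single horoball $H$ (distinct horoballs are disjoint away from depth $0$); the loxodromic element $g_0$ translating along $\ell_0$ would then stabilize $H$ and hence be conjugate into a peripheral subgroup, contradicting its choice. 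With this in hand, your conjugation by $h\in G$ taking that vertex to $\ast$ goes through and yields the stated form of the line.
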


We need a result of \cite{DG08}, but some background first. In (\cite{DG08}, \S 2.3) several constants are defined. There $\delta$ is any positive integer such that $X$ is $\delta$-hyperbolic and $C=3\delta$. Following \cite{DG08}, define 
$$M=6(C+45 \delta)+2\delta+3=6(45 \delta)+20 \delta+3\hbox{ and } K=2M.$$ 

Two conditions on points in $X$ are defined. First:

Given $\epsilon\geq 0$, and two points $x,y\in X$, say that $x$ and $y$ satisfy $\star_\epsilon$ if 
$$\star_\epsilon :|d(\ast,x)-d(\ast, y)|\leq \epsilon \hbox{ and }d(x,y)\leq M.$$

Given an integer $N$, say $x,y\in X$ satisfying $\star_\epsilon$, satisfy condition $  \ddagger (\epsilon,N)(x,y)$  if there is a path of length at most $N$ from $x$ to $y$ in the compliment of the ball $B_{m-48  \delta}(\ast)$ where $m=min\{d(\ast, x),d(\ast,y)\}$. 

\begin{lemma} \label{DG4.2} %{\bf (DG4.2)} 
(\cite{DG08}, Lemma 4.2) If the boundary $\partial X$ is connected, and has no global cut point, then there is an integer $  N$ such that $  \ddagger(10 \delta,  N)(x, y)$ holds for all $x, y \in X_{ K}=\mathcal D^{-1}([0,  K])$ satisfying $\star_{10 \delta}$.
\end{lemma}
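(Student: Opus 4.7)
The plan is to argue by contradiction using a compactness-and-limiting argument. Suppose the conclusion fails. Then for each positive integer $n$ one can find a pair $(x_n,y_n)$ in $X_K$ satisfying $\star_{10\delta}$ for which every path from $x_n$ to $y_n$ in $X\setminus B_{m_n-48\delta}(\ast)$ has length exceeding $n$, where $m_n=\min\{d(\ast,x_n),d(\ast,y_n)\}$. The first observation is that $m_n\to\infty$: otherwise, since $X$ is locally finite and the $m_n$ are bounded, only finitely many such pairs occur, and each admits some finite-length path avoiding the relevant ball, yielding a uniform length bound right away.

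Next, I would exploit cocompactness of the $G$-action on $X_K$ (which follows from cocompactness on the base $Y$ together with the bounded depth $K$). Pick $g_n\in G$ so that $(g_n^{-1}x_n,g_n^{-1}y_n)$ lies in a fixed compact region $W\subset X_K$, and pass to a subsequence with $(g_n^{-1}x_n,g_n^{-1}y_n)\to(x_\infty,y_\infty)\in W$. Because $d(g_n^{-1}\ast,g_n^{-1}x_n)=d(\ast,x_n)\geq m_n\to\infty$, after a further extraction the translated basepoints $g_n^{-1}\ast$ converge to a boundary point $\xi\in\partial X$. In this translated frame the forbidden region $B_{m_n-48\delta}(g_n^{-1}\ast)$ is a metric ball whose influence on $X$ concentrates near $\xi$ as $n\to\infty$.

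The heart of the argument uses the hypothesis on $\partial X$. Since $\partial X$ is connected with no global cut point, $\partial X\setminus\{\xi\}$ is connected. I would use this, together with local finiteness of $X$ and the standard relationship between boundary connectedness and the topology of deep complements in a Gromov hyperbolic space, to produce a path $\alpha$ from $x_\infty$ to $y_\infty$ in $X$ of some finite length $L$ whose closure in $X\cup\partial X$ misses $\xi$; equivalently, a path whose distance from every geodesic ray toward $\xi$ is bounded below by a positive constant. For $n$ large, $\delta$-hyperbolicity together with $g_n^{-1}\ast\to\xi$ then guarantees that $\alpha$ is disjoint from $B_{m_n-48\delta}(g_n^{-1}\ast)$. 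Translating back by $g_n$ and concatenating with bounded segments connecting $g_nx_\infty$ to $x_n$ and $g_ny_\infty$ to $y_n$ yields a path of length at most $L+O(1)$ from $x_n$ to $y_n$ avoiding $B_{m_n-48\delta}(\ast)$, contradicting the defining property of the sequence once $n$ exceeds this bound.

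I expect the principal obstacle to be upgrading connectedness of $\partial X\setminus\{\xi\}$ to a \emph{uniform} length bound on the detour path $\alpha$. Connectedness of the punctured boundary is a purely topological statement, while the conclusion demands a bounded-length path in the locally finite 2-complex $X$, with the bound independent of both the limiting direction $\xi$ and the limit pair $(x_\infty,y_\infty)\in W$. This is precisely the continuous-lifting phenomenon underlying the Bestvina--Mess double dagger lemma; a joint compactness argument over $\xi\in\partial X$ and $(x_\infty,y_\infty)\in W$, combined with local connectedness properties that flow from the no-cut-point hypothesis, should deliver the uniform constant $N$.
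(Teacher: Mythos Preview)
The paper does not give its own proof of this lemma: it is quoted verbatim as \cite{DG08}, Lemma 4.2, and used as a black box. So there is no in-paper argument to compare against; the ``paper's proof'' is simply the citation to Dahmani--Groves, whose argument in turn adapts the original double-dagger lemma of Bestvina--Mess \cite{BM91}.

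That said, your sketch is in the right family. The contradiction/limiting set-up, the use of cocompactness of the $G$-action on $X_K$ to normalize, and the passage to a boundary point $\xi$ are all standard and correct here (note $G$ really does act cocompactly on $X_K$: each peripheral $P_i$ acts cocompactly on the depth-$\le K$ part of its horoball). You also correctly identify the crux: turning connectedness of $\partial X\setminus\{\xi\}$ into a \emph{uniform} finite-length detour in $X$. This is exactly the nontrivial content of the Bestvina--Mess/Dahmani--Groves argument, and it does not follow from a soft compactness statement alone; one has to work with the visual topology and thin-triangle estimates rather carefully.

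One point to tighten: your assertion that ``for $n$ large \dots $\alpha$ is disjoint from $B_{m_n-48\delta}(g_n^{-1}\ast)$'' is not automatic for an arbitrary fixed finite path $\alpha$. That ball has radius roughly $d(x_\infty,g_n^{-1}\ast)-48\delta$, so it reaches to within about $48\delta$ of $x_\infty$; a point $p$ on a geodesic from $x_\infty$ toward $\xi$ with $d(x_\infty,p)>48\delta$ will lie inside the ball for all large $n$. Saying the closure of $\alpha$ in $X\cup\partial X$ misses $\xi$ is vacuous for a compact $\alpha$, and ``distance from every geodesic ray toward $\xi$ bounded below'' is also automatic for a bounded set. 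What you actually need is a bound on the Gromov products $(p\,|\,\xi)_{x_\infty}$ for $p$ on $\alpha$, i.e.\ that $\alpha$ does not travel in the direction of $\xi$ beyond a fixed height. Producing such an $\alpha$, uniformly over all $\xi$ and all limit pairs $(x_\infty,y_\infty)$, is precisely the step that requires the no-cut-point hypothesis in an essential way, and is where the real work in \cite{DG08} lies.
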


Note that the path between $x$ and $y$ of this last lemma has image in $X$ (not $X_K$). Later we improve this result (see Theorem \ref{DD+}) to get a path in $X_K$.

\begin{lemma} \label{AlmostExt} %{\bf (AlmostExt)} 
For any vertex $v$ of $Y$ there is a geodesic ray $r_v$ in $X$ such that $r_v(0)=\ast$ , there is $t_v\in [0,\infty)$ such that $d(r_v(t_v),v)\leq \delta$ and $r_v|_{[t_v,\infty)}$ has image in $\mathcal D^{-1}([0,21\delta])$.
\end{lemma}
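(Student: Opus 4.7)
The strategy is to construct $r_v$ as a geodesic ray from $\ast$ to a well-chosen endpoint of a $G$-translate of the bi-infinite axis provided by Theorem \ref{GM3.33}. First, extend the ray $R=(\rho,g\rho,g^2\rho,\ldots)$ to the bi-infinite geodesic line
\[
L \;=\; (\ldots, g^{-1}\rho,\, \rho,\, g\rho,\, g^2\rho, \ldots)
\]
through $\ast$, which lies in $\mathcal D^{-1}([0,19\delta])$. Since $G$ acts on $X$ by isometries permuting the peripheral horoballs, the depth function $\mathcal D$ is $G$-invariant, so $vL$ is a bi-infinite geodesic through $v$ also contained in $\mathcal D^{-1}([0,19\delta])$. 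Let $\xi^+,\xi^-\in\partial X$ be its two endpoints at infinity and let $p^*\in vL$ be a point closest to $\ast$. Choose $\xi\in\{\xi^+,\xi^-\}$ so that $v$ lies on the sub-ray of $vL$ from $p^*$ toward $\xi$ (either choice works if $p^*=v$).

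Next I construct $r_v$ as a geodesic ray from $\ast$ to $\xi$ by choosing $q_n\in vL$ with $q_n\to\xi$, taking geodesic segments $\sigma_n$ from $\ast$ to $q_n$, and extracting a convergent subsequence via Arzel\`a--Ascoli, using local finiteness of $X$. Because $vL$ is a geodesic line and $v$ lies between $p^*$ and $q_n$ on $vL$, the standard projection estimate in a $\delta$-hyperbolic space gives
\[
d(\ast,q_n) \;\geq\; d(\ast,p^*) + d(p^*,q_n) - 2\delta \;\geq\; d(\ast,v) + d(v,q_n) - 2\delta.
\]
Hence the Gromov product $(\ast\cdot q_n)_v$ is at most $\delta$, so in the geodesic triangle with vertices $\ast,v,q_n$ the inner point on the side $[\ast,v]$ (at distance $(\ast\cdot q_n)_v$ from $v$) lies within $\delta$ of $v$. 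By the tripod/thin-triangle comparison, the paired inner point on $\sigma_n$, at arc-length parameter $(v\cdot q_n)_\ast\approx d(\ast,v)$, is close to $v$. Passing to the Arzel\`a--Ascoli limit yields a parameter $t_v$ on $r_v$ with $d(r_v(t_v),v)\leq \delta$.

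For the depth bound, observe that $r_v|_{[t_v,\infty)}$ and the sub-ray of $vL$ from $v$ toward $\xi$ are both geodesic rays ending at the common ideal point $\xi$, so by the standard asymptotic fellow-traveling of geodesic rays to a shared boundary point in a $\delta$-hyperbolic space they stay within $2\delta$ of each other. Since $\mathcal D$ is $1$-Lipschitz on edge paths and $vL\subset\mathcal D^{-1}([0,19\delta])$, the image $r_v([t_v,\infty))$ lies in $\mathcal D^{-1}([0,19\delta+2\delta])=\mathcal D^{-1}([0,21\delta])$, as required.

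The principal delicate point is tracking the hyperbolicity constants: ensuring that the closest approach of $r_v$ to $v$ is bounded by exactly $\delta$ and that the tail of $r_v$ stays within $2\delta$ of $vL$. The choice of $\xi$ in the first step is essential; the opposite endpoint would put $v$ on the wrong side of the projection $p^*$, and the geodesic from $\ast$ to that endpoint need not approach $v$ at all.
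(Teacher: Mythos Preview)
Your overall strategy---translate the axis to $vL$ and build a ray from $\ast$ to one of its ideal endpoints---matches the paper, but the way you select a \emph{single} endpoint in advance via the nearest-point projection $p^*$ is where the constant slips. Your projection estimate $d(\ast,q_n)\geq d(\ast,p^*)+d(p^*,q_n)-2\delta$ is fine, and it does give $(\ast\cdot q_n)_v\leq\delta$. The problem is the next step: from $(\ast\cdot q_n)_v\leq\delta$ you only get that the inner point on $[\ast,v]$ is within $\delta$ of $v$, and the paired inner point on $\sigma_n$ is within $\delta$ of \emph{that} point. So the tripod comparison yields $d(v,\sigma_n)\leq 2\delta$, not $\delta$. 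There is no general way to upgrade a Gromov-product bound of $\delta$ at a vertex to a distance bound of $\delta$ to the opposite side; the loss of one $\delta$ is intrinsic when $v$ is a \emph{vertex} of the comparison triangle rather than an interior point of a side.

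The paper avoids this by constructing rays $r_v,s_v$ to \emph{both} ideal endpoints of $vL$ and looking at the ideal triangle with sides $r_v$, $s_v$, $vL$. Now $v$ sits on the side $vL$, and $\delta$-slimness applied to that side gives $d(v, r_v\cup s_v)\leq\delta$ in one stroke; whichever ray realizes this becomes the chosen $r_v$. Your attempt to pre-select the ``correct'' endpoint using $p^*$ is exactly what forces $v$ to be a vertex of your comparison triangle and costs the extra $\delta$. (You flagged the constant-tracking as the delicate point, and indeed it does not close.) Your argument for the depth bound, via the ideal triangle with vertices $r_v(t_v)$, $v$, $\xi$ and the short side $[r_v(t_v),v]$, is fine and gives the same $2\delta$ the paper obtains.
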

\begin{proof}
Let $\ell$ be the line of Theorem \ref{GM3.33}. By definition the vertices of $Y$ are the elements of $G$, so that $v\ell$ is a geodesic line through $v$ and with image in $\mathcal D^{-1}[0,19\delta]$. By an elementary local finiteness argument, there are geodesic rays $r_v$ and $s_v$ converging to the two ends of $g\ell$ such that for any integer $n\in [0,\infty)$,  $r_v|_{[0,n]}$ can be extended to a geodesic ending at a vertex of $\ell_v$ (and the same is true for $s_v$). The ideal geodesic triangle formed by $r_v$, $s_v$ and $v\ell$ is $\delta$-thin. In particular, $v$ is within $\delta$ of some vertex of $r_v$ or $s_v$. Assume there is $t_v\in [0,\infty)$ such that $d(r_v(t_v),v)\leq \delta$. Since for any $n\geq t_v$, $r_v|_{[0,n]}$ can be extended to a geodesic ending on $g\ell$, $d(r_v(n),v\ell)\leq 2\delta$. As $v\ell$ has image in  $\mathcal D^{-1}([0,19\delta])$, $r_v|_{[t_v,\infty)}$ has image in $\mathcal D^{-1}([0,21\delta])$.
\end{proof}

\begin{proof} {\it (of the main theorem)}
First assume that $G$ is finitely presented, so that $Y$ is simply connected. Let $g$ and $\rho$ be as in Lemma \ref{GM3.33} and say $\rho$ is the edge path $(e_1,\ldots, e_{L_0})$ so that $|\rho|=L_0$. Say the consecutive vertices of $\rho$ are $\ast=v_0,\ldots, v_{L_0}$. If $v_i\in Y$, then let $y_i=v_i$. Otherwise, let $y_i\in Y$ be the end point of a vertical path (of length $\leq 19\delta$) from $v_i$ to $Y$. Let $\hat\tau_i$ be a shortest path in $Y$ from $y_{i-1}$ to $y_i$. 

\vspace {.5in}
\vbox to 2in{\vspace {-2in} \hspace {-1.3in}
\hspace{-1 in}
\includegraphics[scale=1]{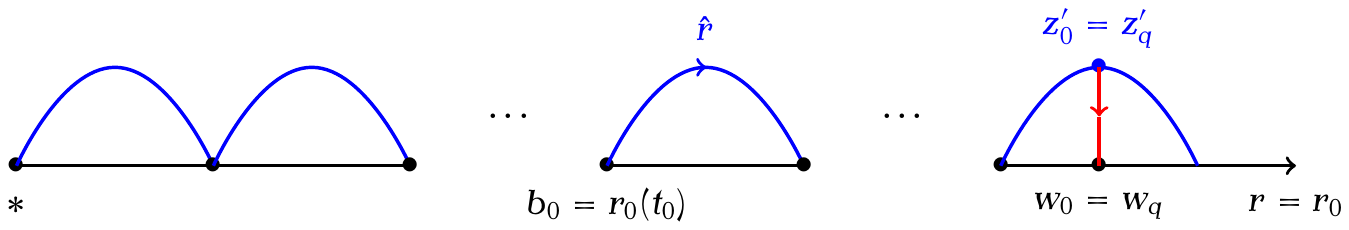}
\vss }

\vspace{-1.2 in}

\centerline{Figure 2}

\medskip

Let $\hat \rho$ be the $Y$-path $\hat \rho=(\hat \tau_1,\ldots,\hat \tau_{L_0})$. Then $\hat r=(\hat\rho,g\hat \rho, g^2\hat \rho,\ldots)$ is a proper edge path ray in $Y$ beginning at $\ast$ and we use $\hat r$ as a base ray in our semistability argument. Notice that the $X$-geodesic ray $r=(\rho, g\rho, g^2\rho,\ldots)$ tracks $\hat r$ and for any vertex $v$ of $r$, there is a vertical path of length $\leq 19\delta$ from $v$ to a point of $\hat r$. Furthermore, $r$ and $\hat r$ share the vertices $\ast, g, g^2,\ldots$ (where $g^n=g^n\ast$). (See Figure 2.)

All of our compacts sets will be finite subcomplexes of $X$ and all paths will be edge paths. Given a compact set $C$ in $Y$, our goal is to find a compact set $D$ in $Y$ with the following semistability property (see Theorem \ref{EquivSS}(2)): For any third compact set $E\subset Y$ and $Y$-loop $\beta$ based on $\hat r$ and with image in $Y-D$, show that $\beta$ is homotopic rel$\{\hat r\}$ to a loop in $Y-E$ by a homotopy in $Y-C$. We will define a set of integers $N_0$, $N_1$, $N_2$, $N_3$ and $N_4$. Our compact set $D$ will be $B_{N_4}(C)\cap Y$, the ball in $X$ of radius $N_4$ about $C$ intersected with $Y$.      

Suppose $\beta$, with consecutive vertices $b_0=r(t_0), b_1, b_2,\ldots, b_{q-1},b_q=b_0$, is a loop based at a common vertex $b_0$ of $\hat r$ and $r$. Let $r=r_0=r_q$.  By Lemma \ref{AlmostExt}, for each $i\in \{1,\ldots, q-1\}$, there is a geodesic ray  $r_i$ at $\ast$ such that for some integer $t_i\in [0,\infty)$, $d(r_i(t_i),b_i)\leq \delta$  and  $r_i|_{[t_i,\infty)}$ has image in $\mathcal D^{-1}([0,21\delta])$. 
Define $\tau_i$ to be a path from $b_i$ to $r_i(t_i)$ of length $\leq \delta$. (See Figure 3.) Since $b_0=r_0(t_0)$, $\tau_0$ is the trivial path. Note that $d(r_{i-1}(t_{i-1}),r_i(t_i))\leq 2\delta+1$. Let $w_i=r_i(t_i+3(45 \delta)+9 \delta)$ for all $i$. Then 
$$d(w_{i-1}, w_i)\leq 2[3(45 \delta)+9  \delta] +2\delta +1= 6(45  \delta)+20\delta +1<  M.$$

\vspace {.5in}
\vbox to 2in{\vspace {-2in} \hspace {-1.3in}
\hspace{-.5 in}
\includegraphics[scale=1]{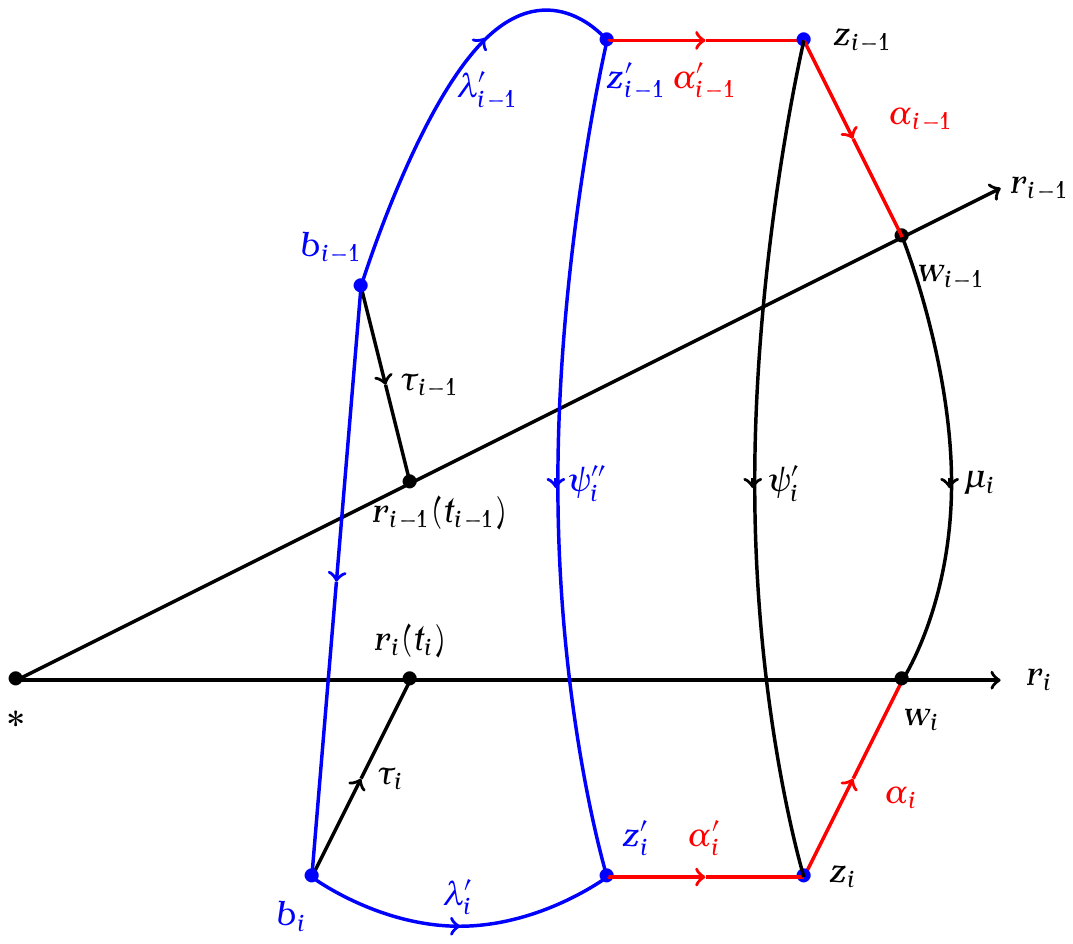}
\vss }

\vspace{2 in}

\centerline{Figure 3}

\medskip

Since $d(r_{i-1}(t_{i-1}), r_i(t_i))\leq 2\delta +1$ and $d(r_{i}(t_{i}), w_i)$ is the same for all $i$, $|d(w_{i-1},\ast)-d(w_i,\ast)|\leq 2\delta +1<10  \delta$. As $w_i\in \mathcal  D^{-1}([0,21\delta])$ and $21 \delta<  K$, Lemma \ref{DG4.2} implies $w_{i-1}$ and $w_i$ satisfy $  \ddagger(10 \delta, N_0) (w_{i-1},w_i)$ for some positive integer $N_0$. That means
there is a path $\mu_i$ in $X$ from $w_{i-1}$ to $w_i$   of length $\leq   N_0$ in the compliment of the ball $B_{m-48  \delta}(\ast)$ where $m=min\{d(\ast, w_{i-1}),d(\ast,w_i)\}$. As $\mathcal D(w_i)\leq 21\delta$ there is a vertical path $\alpha_i$ of length $\leq 20\delta$ from a point $z_i\in \mathcal D^{-1}(\delta)$ to $w_i$. (By the definition of $\hat r$, the vertical path from $w_0$ to $Y$ ends at a point in the image of $\hat r$ and has length $\leq 19\delta$.) Hence the vertical path from $w_0$ to the point $z_0\in \mathcal D^{-1}(\delta)$ is of length $\leq 18\delta$. Let $\alpha_0$ be the vertical path from $z_0$ to $w_0$. {\it The path $\psi_i=(\alpha_{i-1},\mu_i,\alpha_i^{-1})$ begins and ends in $\mathcal D^{-1}(\delta)$, has image in $X-B_{m-68  \delta}(\ast)$ and length $\leq N_0+40\delta$.} The path $\psi_i$ can be decomposed into subpaths where each either has image in $\mathcal D^{-1}([0,\delta])$, %or has length $< 2\delta-5$, 
or satisfies the conclusion of either Lemma \ref{CaseN1} or  Lemma \ref{CaseN2}. Let $\lambda$ be one of these subpaths. 
%***Note, it may be that one of the paths in $\mathcal D^{-1}([\delta,\infty)$ does not lie in a single horoball, if so replace it by a product %of paths of the form $(\alpha, e,\beta)$ where $\alpha$ and $\beta$ are vertical and each has an end point in level $\alpha$. 

%If $|\lambda|< 2\delta-5$ then it has image in $\mathcal D^{-1}([\delta,2\delta-1])$ and (by Lemma \ref{Proj}) a projection $\lambda'$ (to %$H^{-1}(\delta)$) with image in $X-B_{m-69  \delta}(\ast)$.
If $\lambda$ satisfies the conclusion of Lemma \ref{CaseN1}, then it has a projection $\lambda'$ (to $\mathcal D^{-1}(\delta)$) with image in $X-B_{m-68  \delta-(2\delta+1)}(\ast)$. The length of $\lambda$, and in particular the length of $\lambda'$, is bounded by a constant only depending on $|\psi_i|\leq N_0+40  \delta$.

If $\lambda$ satisfies the conclusion of Lemma \ref{CaseN2}, then there is path $\lambda'$ (in $\mathcal D^{-1}(\delta)$) connecting the end points of $\lambda$, with image in $X-B_{m-68  \delta-(2\delta+5)}(\ast)$, and again the length of $\lambda'$ is bounded by a constant only depending  on the length of $\lambda$, and in particular, by a constant depending only on $|\psi_i|\leq N_0+40  \delta$. Replacing each of the $\lambda$ subpaths by the corresponding $\lambda'$ paths in $\mathcal D^{-1}(\delta)$ gives us a path $\psi_i'$ in $\mathcal D^{-1}([0,\delta])$ from $z_{i-1}$ to $z_i$, of length $\leq N_1$ (a constant depending only on the number $N_0+40 \delta$). {\it Then $|\psi_i'| \leq N_1$ and the image of $\psi'$ is in $X-B_{m-68\delta-(2\delta+5)}(\ast)$.}

Let $\alpha_i'$ be the vertical path of length $\delta$ from $z_i'\in Y$ to $z_i$. Note that $z_0'$ is in the image of $\hat r$. Let $\psi_i''$ be a projection of $\psi_i'$ to $Y$. Again, the length of $\psi''$ has length $\leq N_2$ (a constant depending only on the number $N_0+40 \delta$). By Lemma \ref{Proj}, each vertex of $\psi_i''$ is within $\delta+1$ of a vertex of $\psi_i'$. {\it Hence, $|\psi_i''|\leq N_2$  and the image of $\psi_i''$ is in $X-B_{m-68\delta-(2\delta+5)-(\delta +1)}(\ast)=X-B_{m-71\delta-6}(\ast)$.}

Our goal is to show that the edges $[b_{i-1},b_i]$ are compatibly homotopic to the $\psi_i''$, and that each vertex of $\psi_i''$ is further from $\ast$ than either of $b_{i-1}$ or $b_i$. We must also show the homotopies that exchange the edges of $\beta$ for the $\psi_i''$ avoid the compact set $C$ (but first we must decide how large the yet to be defined $N_4$ must be). Once this is accomplished, then repeating the process moves the original path (rel $\{\hat r\}$) to a path outside of any preassigned compact set $E$ (by a homotopy avoiding $C$) to finish the proof.

First we show each vertex $v$ of $\psi_i''$ is further from $\ast$ than either $b_i$ or $b_{i-1}$. We have 
$$d(v,\ast)\geq m-71 \delta -6\hbox { where}$$
$$m=min\{d(\ast, w_{i-1}), d(\ast, w_i)\}=$$ 
$$min\{d(\ast, r_{i-1}(t_{i-1})), d(\ast, r_i(t_i))\}+3(45 \delta) +9 \delta$$
Then,
$$d(v,\ast)\geq min\{d(\ast, r_{i-1}(t_{i-1})), d(\ast, r_i(t_i))\}+73\delta-6$$
Since $d(r_i(t_i),b_i)\leq \delta$ for all $i$:
$$d(v,\ast)\geq min\{d(b_{i-1},\ast), d(b_i,\ast)\}+72 \delta-6$$
Since $d(b_{i-1},b_i)\leq 1$, $v$ is at least $72  \delta-7$ further from $\ast$ than either $b_{i-1}$ or $b_i$ is from $\ast$. 
%In particular, $w_i$ (and $w_{i-1}$) is at least $2(45  \delta)+5 \delta -1$ further from $\ast$ than either of $b_{i-1}$ or $b_i$ so each %point of $\psi_i=(\alpha_{i-1},\mu_i ,\alpha_i^{-1}$) is at least $2(45  \delta)+5 \delta -21 \delta-1$ further from $\ast$ than either of 
%$b_{i-1}$ or $b_i$. 
We have shown:

(2) Each vertex of $\psi_i''$ is at least $72\delta-7$ further from $\ast$ than either $b_{i-1}$ or $b_i$ is from $\ast$.

Recall that $|\tau_i|\leq \delta$, $d(r(t_i),w_i)=3(45\delta)+9\delta$ and $|(\alpha_i',\alpha)|\leq 21\delta$. Consider the $X$ path $(\tau_{i}, [r_i(t_i), w_i], \alpha_i^{-1},\alpha_i'^{-1})$ from $b_i$ to $z_i'$ (both vertices in $Y$) of length $\leq 3(45 \delta)+31 \delta=L_1$.  
There is a positive integer $N_3$ only depending on $L_1$ so that for any edge path $\lambda$ in $X$ of length $\leq L_1$, that begins and ends in $Y$, there is an edge path $\lambda'$ of length $\leq N_3$ in $Y$ connecting the end points of $\lambda$. In particular, there is a $Y$-edge path $\lambda_i'$ from $b_i$ to $z_i$ of length $\leq N_3$. When $i=0$ or $i=n$ we want $\lambda_0'=\lambda_n'$ to be the subpath of $\hat r$ from $b_0=b_n$ to $z_0=z_n$. Since $b_0=r_0(t_0)$ and $d(r_0(t_0),w_0)=3(45\delta)+9\delta$, we can assume $N_3$ is larger than the length of the subpath of $\hat r$ from $b_0=r_0(t_0)\in im(\hat r)$  to $z_0=z_n$.

The loops $\ell_i=([b_{i-1},b_i], \lambda_i',  \psi_i''^{-1}, \lambda_{i-1}'^{-1})$ have image in $Y$ and are of length $\leq 1+2N_3+N_2$. Choose an integer $N_4$ so that for any loop of length $\leq 1+2N_3+N_2$ in $Y$ the loop is homotopically trivial by a homotopy in $B^Y_{N_4}(v)$ (the ball in $Y$ of radius $N_4$) for any vertex $v$ of the loop. Define the compact set $D$ to be $B_{N_4}(C)\cap Y$ (the ball in $X$ of radius $N_4$ about $C$, intersected with $Y$). Assume that $\beta$ is a loop in $Y$ based at a vertex of $r$ common to $\hat r$ and with image in $Y-D$. The homotopies killing the $\ell_i$ in $Y$ (with image in $St_Y^{N_4} (v)$ for any vertex $v$ of $\ell_i$) move each edge $[b_{i-1},b_i]$ to a path $\psi_i''$ by a homotopy in $Y$ whose image avoids the compact set $C$. These homotopies string together compatibly to move $\beta$ further out along $\hat r$. Continuing, $\beta$ is eventually moved outside of any compact subset $E$ of $Y$. We have shown that $Y$, and hence $G$ has semistable fundamental group at $\infty$.  

Note that in order to show that $Y$ has semistable fundamental group at $\infty$, the only homotopies we need are those that kill loops of length $\leq 1+2N_3+N_2$ in $Y$. If $G$ is only finitely generated (as opposed to finitely presented), then let $Y$ be obtained from $\Gamma(G,S)$ the Cayley graph of $G$ with respect to the generating set $S$, by attaching 2-cells at each vertex for each loop of length $\leq 1+2N_3+N_2$. The same proof (with only minor changes in the last paragraph) shows that $Y$ has semistable fundamental group at $\infty$. \end{proof}

% Whenever $Y$ has semistable fundamental group at $\infty$ it is straightforward to show that $X_k$ for all $k\geq 0$ does also.  
%It is also clear that $Y$ is a strong deformation retract of $X_k$ for any $k\geq 0$ and all maps involved are proper. In particular, $Y$ %and $X_k$ are proper homotopy equivalent hence when $Y$ is semistable at infinity so is $X_k$. 

\begin{theorem} \label{XKSS} %{\bf (XKSS)} 
Suppose $G$ is a 1-ended finitely generated group that is hyperbolic relative to the finitely generated proper subgroups $P_1,\ldots, P_n$. Let $X$ be a corresponding Groves-Manning space. If $Y$ has semistable fundamental group at $\infty$ then $X_k$ has semistable fundamental group at $\infty$ for all $k\geq 0$.
\end{theorem}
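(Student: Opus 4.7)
The plan is to exhibit $X_k$ as properly homotopy equivalent to $Y$ via a natural vertical deformation retraction, and then invoke the fact recalled in \S\ref{SB} that semistable fundamental group at $\infty$ is an invariant of proper homotopy type. Once this proper homotopy equivalence is established, the conclusion is immediate from the hypothesis on $Y$.

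First, I would define a cellular retraction $\pi:X_k\to Y$ horoball-by-horoball. On the $0$-skeleton, send each vertex $(v,j)$ of a horoball slab $\mathcal{H}(\Gamma_{i,t})_k$ to $(v,0)\in \Gamma_{i,t}\subset Y$, and let $\pi$ be the identity on $Y$. Extend to the $1$- and $2$-skeleton by pushing each cell down a level at a time using the combinatorial structure of the horoballs: a vertical edge collapses along itself; a horizontal edge at depth $j$ between $(v,j)$ and $(w,j)$ with $d_\Gamma(v,w)\leq 2^{j-1}$ slides through a vertical square to the corresponding horizontal edge at depth $j-1$; a horizontal edge at depth $j$ with $2^{j-1}<d_\Gamma(v,w)\leq 2^j$ slides through a vertical pentagon to a $2$-edge path at depth $j-1$ via a chosen midpoint $u\in \Gamma_{i,t}$; horizontal triangles then fill in compatibly. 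Iterating this level-by-level retraction for $j=k,k-1,\dots,1$ produces a retraction of each $\mathcal{H}(\Gamma_{i,t})_k$ onto $\Gamma_{i,t}$ together with an accompanying homotopy $H:X_k\times[0,1]\to X_k$ from $\mathrm{id}_{X_k}$ to $i\circ\pi$, where $i:Y\hookrightarrow X_k$ is inclusion. Properness of both $\pi$ and $H$ follows because a finite subcomplex $K\subset Y$ meets only finitely many horoball bases $\Gamma_{i,t}$; over each such base, $\pi^{-1}(K)$ lies in a slab of depth $\leq k$ over the finite set $K\cap \Gamma_{i,t}$, and the horizontal cells at each depth $j\leq k$ within this slab join only vertices at $\Gamma$-distance $\leq 2^k$, yielding a finite subcomplex of $X_k$.

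The main obstacle will be making the level-by-level retraction coherent on the pentagonal $2$-cells: choosing midpoints $u$ for each ``long'' horizontal edge consistently across all horoballs so that the cellular maps assemble into a continuous global map on the full $2$-skeleton. Once the deformation retraction is in place, everything else is routine — properness reduces to local finiteness of $X$ plus the depth bound $k$, and the conclusion that $X_k$ has semistable fundamental group at $\infty$ then follows formally from proper homotopy invariance of semistability combined with the hypothesis that $Y$ is semistable at $\infty$.
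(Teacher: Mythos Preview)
Your approach has a genuine gap: the horoball slab $(\mathcal{H}(\Gamma))_k$ does \emph{not} deformation retract onto its base $\Gamma$, so the vertical collapse you describe cannot be extended over the horizontal triangles. Take $P_i=\mathbb{Z}^2$ with its standard Cayley graph $\Gamma$. The unit square in $\Gamma$ is nontrivial in $\pi_1(\Gamma)$, but after lifting it to depth $1$ via four vertical squares, the resulting $4$-cycle at depth $1$ bounds two horizontal triangles (the diagonal has $\Gamma$-length $2\le 2^1$, hence is an edge at depth $1$). Thus $\Gamma\hookrightarrow H_1$ is not injective on $\pi_1$, and no retraction $H_1\to\Gamma$ exists. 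In the finitely presented case a Mayer--Vietoris computation then shows that $Y\hookrightarrow X_k$ is not surjective on $H_2$ (the two disks filling a peripheral loop---one in $Y$, one via horoball triangles---glue to a $2$-sphere in $X_k$), so $X_k$ and $Y$ are not even homotopy equivalent. The obstacle you anticipate is therefore not a matter of choosing pentagon midpoints consistently; the horizontal triangles make the deformation-retraction strategy unworkable in principle.

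The paper sidesteps all of this by verifying the ray criterion for semistability directly, never needing to retract a $2$-cell. Given proper rays $r,s$ in $X_k$ based at $\ast$, form their vertical projections $r',s'$ to $Y$; each ray is properly homotopic in $X_k$ to its projection because the track over any single edge of $r$ is a disk in the relevant horoball built from finitely many vertical squares and pentagons, of diameter bounded in terms of $k$. Then $r'\simeq s'$ properly in $Y$ by hypothesis, and concatenating the three proper homotopies gives $r\simeq s$ in $X_k$.
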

\begin{proof} 
Let $r$ and $s$ be proper rays at $\ast$ in $X_k$ for $k\geq 0$. Let $r'$ and $s'$ be projections of $r$ and $s$ to $Y$. Clearly, $r$ is properly homotopic to $r'$ and $s$ to $s'$. Since $r'$ and $s'$ are properly homotopic (by the semistablity of $Y$), a combination of proper homotopies shows $r$ is properly homotopic to $s$ in $X_k$.
\end{proof}
%It is also straightforward to use Lemmas \ref{CaseN1} and \ref{CaseN2} to strengthen the \cite{DG08} $\ddagger$ result, Lemma \ref{DG4.2}. The conclusion of Lemma \ref{DG4.2} only ensures a path of length $\leq N$ in $X-B_{m-48\delta}(\ast)$ connecting the %points $x$ and $y$ of $X_K$. We strengthen this to provide a path of bounded length in $X_K-B_{m-68\delta}(\ast)$ connecting $x$ %and $y$. 

%\begin{lemma} \label{DD} {\bf (DD)} If the boundary $\partial X$ is connected, and has no global cut point, then there is an integer $ N$ %such that $  \ddagger(10 \delta,  N)(x, y)$ in $X_K$ holds for all $x, y \in X_{ K}=\mathcal D^{-1}([0,  K])$ satisfying $\star_{10 \delta}$.
%\end{lemma}

\section{An Improved $\ddagger$ Result} \label{SDD} %{\bf (SDD)}

 Throughout this section $G$ is a finitely generated group that is hyperbolic relative to a finite collection $\mathcal P$ of proper 1-ended finitely generated subgroups. Assume again that $X$ is a Groves-Manning space for $(G,\mathcal P)$. Say $X$ is $\delta_0$ hyperbolic and that $\delta\geq \delta_0$ is an integer. Lemmas \ref{CaseN1} and \ref{CaseN2} allow us to improve Lemma \ref{DG4.2} from a partial result about $X_{K(\delta)}$ to one completely about $X_{K(\delta)}$. More precisely, Lemma \ref{DG4.2} guarantees a path in 
 $X-B_{m-48\delta}(\ast)$ and we want to replace it by a path in $X_{K(\delta)}-B_{m-(50\delta+5)} (\ast)$.

First some observations. As Lemma \ref{DG4.2} gives a separate result for each $\delta\geq \delta_0$, the integers $K$, $M$ and $N$ will in fact be $K(\delta)$, $M(\delta)$ and $N(\delta)$. 
%Let $K_0$, $M_0$ and $N_0$ be the values for $\delta=\delta_0$. 
Consider $\delta\geq \delta_0$. Say $x,y\in X_{K(\delta)}$ satisfy $\star_{10\delta}$ and $\psi$ is a path in $X$ of length $\leq N(\delta)$ joining $x$ and $y$ in $X-B_{m-48\delta}(\ast)$ where $m=min\{d(\ast, x),d(\ast,y)\}$. Lemmas \ref{CaseN1} and \ref{CaseN2} allow us to replace the segments of $\psi$ that leave $X_{K(\delta)}$ by paths in $H({K(\delta)})$ (for some horoball $H$). The number of such segments and the lengths of the replacement paths are bounded in terms of $N(\delta)$ and the image of each such replacement path is in $X_{K(\delta)}-B_{m-(48\delta+2\delta+5)}(\ast)$. 

Recall the $\star_\epsilon$ and $\ddagger$ condition of \cite{DG08}, where for any $\delta\geq \delta_0$ we define
$$M(\delta)=6(45 \delta)+20 \delta+3\hbox{ and } K(\delta)=2M(\delta).$$

Given $\epsilon\geq 0$, and two points $x,y\in X$, say that $x$ and $y$ satisfy $\star_\epsilon$ if 
$$\star_\epsilon :|d(\ast,x)-d(\ast, y)|\leq \epsilon \hbox{ and }d(x,y)\leq M.$$ 
Note that if $\epsilon'\leq \epsilon$ and $x,y\in X$ satisfy $\star_{\epsilon'}$ then $x,y$ satisfy $\star_\epsilon$. 

Given an integer $N$, say $x,y\in X$ satisfying $\star_\epsilon$ satisfy condition $\ddagger (\epsilon,N)(x,y)$  if there is a path of length at most $N$ from $x$ to $y$ in  $X-B_{m-48  \delta}(\ast)$ where $m=min\{d(\ast, x),d(\ast,y)\}$. 

This induces the following definition.

Given an integer $N$, say $x,y\in X_{K(\delta)}$ satisfying $\star_\epsilon$ satisfy condition $\hat  \ddagger (\epsilon,N)(x,y)$  if there is a path of length at most $N$ from $x$ to $y$ in $X_{K(\delta)}- B_{m-50\delta-5}(\ast)$ where $m=min\{d(\ast, x),d(\ast,y)\}$. 

Lemma \ref{DG4.2} states that if the boundary $\partial X$ is connected, and has no global cut point, then there is an integer $N(\delta)$ such that $ \ddagger(10 \delta,  N)(x, y)$ holds for all $x, y \in X_{ K(\delta)}=\mathcal D^{-1}([0,  K(\delta)])$ satisfying $\star_{10 \delta}$. The path of length $\leq N$ connecting $x$ and $y$ and guaranteed  by Lemma \ref{DG4.2} has image in $X$. Lemmas \ref{CaseN1} and \ref{CaseN2}  enhance this result to give a path in $X_K$ (although $N$ will become larger and the image of the path may come $2\delta+5$ units closer to $\ast$ than the one given by $\ddagger(10\delta,N(x,y)$).

Combining these observations we have:

\begin{theorem} \label{DD+} %{\bf (DD+)} 
Suppose $G$ is a finitely generated group that is relatively hyperbolic with respect to a finite collection of finitely generated 1-ended proper subgroups, the Groves-Manning space $X$ is $\delta_0$-hyperbolic and $\delta$ is an integer $\geq \delta_0$. If the boundary $\partial X$ is connected, and has no global cut point, then there is an integer $N(\delta)$ such that $\hat \ddagger(10 \delta,  N)(x, y)$ holds for all $x, y \in X_{ K(\delta)}(=\mathcal D^{-1}([0,  K(\delta)]))$ satisfying $\star_{10 \delta}$.
\end{theorem}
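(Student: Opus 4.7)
The plan is to bootstrap Lemma \ref{DG4.2} using the two main replacement lemmas. Given $x,y \in X_{K(\delta)}$ satisfying $\star_{10\delta}$, Lemma \ref{DG4.2} provides a path $\psi$ in $X$ of length at most $N(\delta)$ from $x$ to $y$ avoiding $B_{m-48\delta}(\ast)$, where $m = \min\{d(\ast,x),d(\ast,y)\}$. This $\psi$ may, however, dip into horoballs above depth $K(\delta)$, and the task is to modify it so that its image lies in $X_{K(\delta)}$, at the cost of enlarging $N$ and shrinking the excluded ball by $2\delta+5$.

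First I would decompose $\psi$ into maximal subpaths of two types: those with image in $X_{K(\delta)}$, which are kept unchanged, and those with image in $H^{K(\delta)}$ for some horoball $H$ and with both endpoints lying on $H(K(\delta))$ (these are the excursions above the cutoff). Since $|\psi|\le N(\delta)$, there are at most $N(\delta)$ such excursions, each of length at most $N(\delta)$. For each excursion $\lambda$ with endpoints $x_\lambda,y_\lambda \in H(K(\delta))$, let $z$ be a vertex of $H(K(\delta))$ closest to $\ast$; then the hypotheses of Lemmas \ref{CaseN1} and \ref{CaseN2} are satisfied with $\bar d = K(\delta)$ (which exceeds $\delta$ since $K(\delta)=2M(\delta)$) and $r = m-48\delta$.

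Next, I apply the dichotomy described in the excerpt (and made explicit in Remark \ref{RN1}): either every vertex of $\lambda$ is farther than $|\lambda|+2$ from $z$, in which case Lemma \ref{CaseN1} produces a projection $\lambda'$ in $H(K(\delta))$ avoiding $B_{r-(2\delta+1)}(\ast)$; or some vertex is within $|\lambda|+1$ of $z$, in which case Lemma \ref{CaseN2} produces a replacement $\lambda'$ in $H(K(\delta))$ of length bounded by $F_{\ref{CaseN2}}(|\lambda|)\le F_{\ref{CaseN2}}(N(\delta))$ avoiding $B_{r-(2\delta+5)}(\ast)$. In either case $\lambda'$ has image in $H(K(\delta))\subset X_{K(\delta)}$, avoids $B_{m-50\delta-5}(\ast)$, and has length bounded by a constant $F(N(\delta))$ depending only on $N(\delta)$ (and $\delta$).

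Finally I would assemble the new path $\psi'$ by concatenating the unchanged $X_{K(\delta)}$-pieces of $\psi$ with the replacement pieces $\lambda'$; the gluing is valid because each $\lambda'$ has the same endpoints as the corresponding $\lambda$. The total length is at most $N(\delta) + N(\delta)\cdot F(N(\delta))$, so defining $N'(\delta)$ to be this sum gives the required bound, and the image of $\psi'$ lies in $X_{K(\delta)}-B_{m-50\delta-5}(\ast)$, establishing $\hat\ddagger(10\delta,N'(\delta))(x,y)$. The only subtlety I anticipate is verifying that the excursion endpoints indeed lie on a single horospherical level $H(K(\delta))$ (so that the lemmas apply cleanly): this follows from taking the decomposition to be maximal and using the fact that entering or leaving $X_{K(\delta)}$ must happen through a vertex of depth exactly $K(\delta)$ along the 1-skeleton, after a harmless subdivision of $\psi$ at the crossing edges.
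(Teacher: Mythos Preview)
Your proposal is correct and follows essentially the same route as the paper: the paper's argument (given as the ``observations'' preceding the theorem statement) is precisely to start from the path supplied by Lemma~\ref{DG4.2}, cut out the excursions above depth $K(\delta)$, and replace each by a path in $H(K(\delta))$ via the dichotomy of Lemmas~\ref{CaseN1} and~\ref{CaseN2}, noting that both the number of excursions and the replacement lengths are bounded in terms of $N(\delta)$ and that the loss in radius is at most $2\delta+5$. Your write-up in fact supplies more detail than the paper does (the explicit invocation of Remark~\ref{RN1}, the length estimate, and the subdivision at crossing edges).
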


\section{An Interesting Technical Lemma} \label{FC} %{\bf (FC)}

The following lemma seems more general than Lemma \ref{CaseN1}, since there is no restriction on the length of the path $\psi$ considered here, only a restriction on the depth of the path. The proof parallels that of Lemma \ref{CaseN1} and the second half is exactly the same.  

\begin{lemma} \label{Case1} %{\bf(Case1)}
Suppose $H$ is a horoball, $\bar d$ an integer $\geq \delta$, $x\ne y$ vertices of $H(\bar d)$ and $\psi$ is a path in $H^{\bar d}-B_r(\ast)$ between $x$ and $y$ that only intersects $H(\bar d)$ at $x$ and $y$.
%of length $\leq 2L+3$ (for some integer $L$) 
Let $L$ be such that $\mathcal D(\psi)=[\bar d,L]$ (the depth of $\psi$). 

Suppose $\gamma$ is a projection of $\psi$ to $H(\bar d)$.  If $z$ is a closest vertex of $H(\bar d)$ to $\ast$ and each vertex $v$ of $\psi$ is such that $d(v,z)>L-{\bar d\over 2}+2$, then each vertex of $\gamma$ is at distance greater than $L-{\bar d\over 2} +2$ from $z$. 
Furthermore,  the image of $\gamma$ avoids $B_{r-(2\delta+1)}(\ast)$. 
\end{lemma}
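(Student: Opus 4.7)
The plan is to mirror the proof of Lemma~\ref{CaseN1} essentially verbatim, with the single conceptual change that the length parameter $|\psi|=L$ is replaced throughout by the depth parameter $L=\max\mathcal{D}(\psi)$. The key input, Lemma~\ref{Proj}, continues to produce, for each vertex $p$ of $\gamma$, a vertex $w$ on the vertical line through $p$ at the same depth as some vertex $v$ of $\psi$ with $d(v,w)\le 1$. Since $v\in\psi$, we have $\mathcal{D}(v)\le L$; this is the step where ``depth'' supplants ``length'' in all subsequent bounds.

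Fix a vertex $p$ of $\gamma$. Using convexity of $H^{\bar d}$ (Lemma~\ref{geo}), select a geodesic $(\alpha_p,\tau_p,\beta_p)$ from $z$ to $p$ with $\alpha_p,\beta_p$ vertical of equal length and $|\tau_p|\le 3$. Let $y$ denote the endpoint of $\tau_p$; this point lies on the vertical line through $p$ at depth $\bar d+|\alpha_p|$. Split on whether $\mathcal{D}(y)\le\mathcal{D}(w)$ (Case~1) or $\mathcal{D}(y)>\mathcal{D}(w)$ (Case~2), exactly as in Lemma~\ref{CaseN1}.

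In Case~1, the concatenation of the horizontal edge $v\to w$, the vertical segment $w\to y$, $\tau_p^{-1}$, and $\alpha_p^{-1}$ is a path from $v$ to $z$ of length $1+(\mathcal{D}(v)-\bar d-|\alpha_p|)+|\tau_p|+|\alpha_p|=1+\mathcal{D}(v)-\bar d+|\tau_p|\le L-\bar d+4$. Combined with the hypothesis $d(v,z)>L-\bar d/2+2$, this forces $\bar d<4$, which is incompatible with $\bar d\ge\delta$ once $\delta\ge 4$ (which may always be arranged by enlarging the hyperbolicity constant). Hence Case~1 cannot occur. In Case~2, $w$ lies on the vertical segment $\beta_p$, so the geodesic from $z$ to $p$ passes through $w$ and $d(z,p)=d(z,w)+d(w,p)\ge 1+d(z,w)\ge d(v,z)>L-\bar d/2+2$, establishing the first conclusion.

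For the second conclusion the argument is, in the words of the paper, ``exactly the same'' as in Lemma~\ref{CaseN1}: combining $d(p,z)\ge d(v,z)$ (from Case~2) with Lemma~\ref{Base} and the triangle inequality,
\[
d(\ast,p)+2\delta+1\ge d(\ast,z)+d(z,p)\ge d(\ast,z)+d(v,z)\ge d(\ast,v)>r,
\]
so $d(\ast,p)>r-(2\delta+1)$. The only delicate point in the whole argument is the Case~1 estimate: the hypothesis threshold $L-\bar d/2+2$ is calibrated precisely so that the upper bound $L-\bar d+4$ produced in Case~1 becomes contradictory as soon as $\bar d\ge 4$, and beyond this bookkeeping no new idea is required.
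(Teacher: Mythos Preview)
Your proof follows the paper's approach essentially line for line: fix $p$, take a standard geodesic $(\alpha_p,\tau_p,\beta_p)$ from $z$ to $p$, invoke Lemma~\ref{Proj} to find $v$ and $w$, split into the two cases according to whether $y$ lies on the vertical segment from $p$ up to $w$, and finish the second conclusion exactly as the paper does via Lemma~\ref{Base}.

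The one substantive difference is the Case~1 estimate. The paper asserts that the sum of the two vertical segments of $\rho$ is at most $\frac{L-\bar d-1}{2}$ and from this derives $L<3$. You instead compute $|\rho|\le L-\bar d+4$ directly from $\mathcal D(v)\le L$ and obtain $\bar d<4$. Your computation is the correct one: the two vertical pieces of $\rho$ have total length $\mathcal D(v)-\bar d$, which is bounded by $L-\bar d$, not by $\frac{L-\bar d-1}{2}$; the paper's bound appears to be a transcription slip carried over from Lemma~\ref{CaseN1} (where $L=|\psi|$ and the inequality $\mathcal D(v)-\bar d\le\frac{|\psi|-1}{2}$ genuinely holds). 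Your remedy of enlarging $\delta$ so that $\bar d\ge\delta\ge 4$ is legitimate and costs nothing for the applications, so your argument is in fact the more careful of the two.
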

\begin{proof} 
By Lemma \ref{geo}, $H^{\bar d}$ is convex. Let $p$ be a vertex of $\gamma$ and $(\alpha_p,\tau_p,\beta_p)$ a geodesic (in $H^{\bar d}$) from $z$ to $p$ where $\alpha_p$ and $\beta_p$ are vertical of the same length and $|\tau_p|\leq 3$. Let $y$ be the end point of $\tau_p$. By Lemma \ref{Proj}, there is a vertical segment that begins at $p$ and ends at most 1 (horizontal) unit from a vertex $v$ of $\psi$ (and $\mathcal D(v)\geq \bar d+1$). If $y$ is on that vertical line segment (Figure 1.1), then there is a path $\rho$ from $v$ to $z$ that begins with a horizontal edge from $v$ to a vertex $w$ on the vertical segment, followed by a vertical segment from $w$ to $y$, followed by $\tau_p^{-1}$, followed by $\alpha_p^{-1}$. 

 The sum of the lengths of the two vertical segments of $\rho$ is less than or equal to ${L-\bar d-1\over 2}$ (since $v$ is on $\psi$), and so 
$$ L-{\bar d\over 2}+2<d(v,z)\leq {L-\bar d-1\over2}+4={L\over 2}-{\bar d\over 2}+{7\over2}\hbox{ implying }$$ 
$$2L+4<L+7 \hbox{ and  } L<3,$$
which is impossible since $x\ne y$.  

Instead, $y$ is on the vertical line at $p$ and the vertical line segment from $p$ to $y$ contains a vertex $w$ (other than $y$) within 1 horizontal unit of a vertex $v$ of $\psi$ (Figure 1.2). Now 
$$d(z,p)=d(p,w)+d(w,y)+|\tau_p| +|\alpha_p|\hbox{ and}$$  
$$L-{\bar d\over 2}+2<d(v,z)\leq 1+d(w,y)+|\tau_p| +|\alpha_p|$$ 
Since $d(p,w)\geq 1$, $d(z,p)>L-{\bar d\over 2}+2$. Completing the first part of the lemma.

Since the depth of $v$ (and hence the depth of $w$) is at least $\bar d+1$,
$$d(p,z) \geq 1+d(w,z)\geq d(v,z)$$
Combining this inequality with Lemma \ref{Base} and the triangle inequality, we have for each vertex $p$ of $\gamma$:
$$ d(p,\ast)+2\delta+1\geq d(p,z)+d(z,\ast)\geq d(v,z)+d(z,\ast)\geq d(v,\ast)>r$$
 So $d(p,\ast)> r-2\delta -1$.
\end{proof}

\bibliographystyle{amsalpha}
\bibliography{paper}{}

\def\cprime{$'$}
\providecommand{\bysame}{\leavevmode\hbox to3em{\hrulefill}\thinspace}
\providecommand{\MR}{\relax\ifhmode\unskip\space\fi MR }
% \MRhref is called by the amsart/book/proc definition of \MR.
\providecommand{\MRhref}[2]{%
  \href{http://www.ams.org/mathscinet-getitem?mr=#1}{#2}
}
\providecommand{\href}[2]{#2}
\begin{thebibliography}{Bow99b}

\bibitem[BM91]{BM91}
Mladen Bestvina and Geoffrey Mess, \emph{The boundary of negatively curved
  groups}, J. Amer. Math. Soc. \textbf{4} (1991), no.~3, 469--481. \MR{1096169}

\bibitem[Bow99a]{Bow99A}
B.~H. Bowditch, \emph{Boundaries of geometrically finite groups}, Math. Z.
  \textbf{230} (1999), no.~3, 509--527. \MR{1680044}

\bibitem[Bow99b]{Bow99B}
\bysame, \emph{Connectedness properties of limit sets}, Trans. Amer. Math. Soc.
  \textbf{351} (1999), no.~9, 3673--3686. \MR{1624089}

\bibitem[Bow01]{Bow01}
\bysame, \emph{Peripheral splittings of groups}, Trans. Amer. Math. Soc.
  \textbf{353} (2001), no.~10, 4057--4082. \MR{1837220}

\bibitem[Bow12]{Bow12}
\bysame, \emph{Relatively hyperbolic groups}, Internat. J. Algebra Comput.
  \textbf{22} (2012), no.~3, 1250016, 66. \MR{2922380}

\bibitem[CM14]{CM2}
Gregory~R. Conner and Michael~L. Mihalik, \emph{Commensurated subgroups,
  semistability and simple connectivity at infinity}, Algebr. Geom. Topol.
  \textbf{14} (2014), no.~6, 3509--3532. \MR{3302969}

\bibitem[Dah03]{D03}
Fran\c{c}ois Dahmani, \emph{Combination of convergence groups}, Geom. Topol.
  \textbf{7} (2003), 933--963. \MR{2026551}

\bibitem[DG08]{DG08}
Fran\c{c}ois Dahmani and Daniel Groves, \emph{Detecting free splittings in
  relatively hyperbolic groups}, Trans. Amer. Math. Soc. \textbf{360} (2008),
  no.~12, 6303--6318. \MR{2434288}

\bibitem[DG13]{DG13}
Fran\c{c}ois Dahmani and Vincent Guirardel, \emph{Presenting parabolic
  subgroups}, Algebr. Geom. Topol. \textbf{13} (2013), no.~6, 3203--3222.
  \MR{3248731}

\bibitem[DGO17]{DGO17}
Fran\c{c}ois Dahmani, Vincent Guirardel, and Denis~V. Osin,
  \emph{Hyperbolically embedded subgroups and rotating families in gourps
  actiong onf hyperbolic spaces}, Mem. Amer. Math. Soc. \textbf{245} (2017),
  no.~1156, v+152. \MR{3589159}

\bibitem[DS78]{DydakS}
Jerzy Dydak and Jack Segal, \emph{Shape theory}, Lecture Notes in Mathematics,
  vol. 688, Springer, Berlin, 1978, An introduction. \MR{520227}

\bibitem[DtS05]{DS05}
Cornelia Dru\c~tu and Mark Sapir, \emph{Tree-graded spaces and asymptotic cones
  of groups}, Topology \textbf{44} (2005), no.~5, 959--1058, With an appendix
  by Denis Osin and Mark Sapir. \MR{2153979}

\bibitem[Dun85]{Dun85}
M.~J. Dunwoody, \emph{The accessibility of finitely presented groups}, Invent.
  Math. \textbf{81} (1985), no.~3, 449--457. \MR{807066}

\bibitem[Geo08]{G}
Ross Geoghegan, \emph{Topological methods in group theory}, Graduate Texts in
  Mathematics, vol. 243, Springer, New York, 2008. \MR{2365352}

\bibitem[GM85]{MR787167}
Ross Geoghegan and Michael~L. Mihalik, \emph{Free abelian cohomology of groups
  and ends of universal covers}, J. Pure Appl. Algebra \textbf{36} (1985),
  no.~2, 123--137. \MR{787167}

\bibitem[GM08]{GMa08}
Daniel Groves and Jason~Fox Manning, \emph{Dehn filling in relatively
  hyperbolic groups}, Israel J. Math. \textbf{168} (2008), 317--429.
  \MR{2448064}

\bibitem[GS]{GS17}
Ross Geoghegan and Eric Swenson, \emph{On semistability of {CAT(0)} groups},
  ArXiv: 1707.07061 [math.{GR}].

\bibitem[HR]{HR17}
C.~Hruska and K.~Ruane, \emph{Connectedness properties and splittings of groups
  with isolated flats}, ArXiv:1705.00784 [math.GR].

\bibitem[Mih86]{M4}
Michael~L. Mihalik, \emph{Semistability at {$\infty$} of finitely generated
  groups, and solvable groups}, Topology Appl. \textbf{24} (1986), no.~1-3,
  259--269, Special volume in honor of R. H. Bing (1914--1986). \MR{872498}

\bibitem[MT92]{MT1992}
Michael~L. Mihalik and Steven~T. Tschantz, \emph{Semistability of amalgamated
  products and {HNN}-extensions}, Mem. Amer. Math. Soc. \textbf{98} (1992),
  no.~471, vi+86. \MR{1110521}

\bibitem[Osi06]{Osin06}
Denis~V. Osin, \emph{Relatively hyperbolic groups: intrinsic geometry,
  algebraic properties, and algorithmic problems}, Mem. Amer. Math. Soc.
  \textbf{179} (2006), no.~843, vi+100. \MR{2182268}

\bibitem[PS09]{PS09}
Panos Papasoglu and Eric~L. Swenson, \emph{Boundaries and {JSJ} decompositions
  of {CAT}(0)-groups}, Geom. Funct. Anal. \textbf{19} (2009), no.~2, 559--590.
  \MR{2545250}

\bibitem[Swa96]{Swarup}
G.~A. Swarup, \emph{On the cut point conjecture}, Electron. Res. Announc. Amer.
  Math. Soc. \textbf{2} (1996), no.~2, 98--100. \MR{1412948}

\bibitem[Swe99]{Sw}
Eric~L. Swenson, \emph{A cut point theorem for {${\rm CAT}(0)$} groups}, J.
  Differential Geom. \textbf{53} (1999), no.~2, 327--358. \MR{1802725}

\end{thebibliography}

\end{document}